
\documentclass[final]{siamltex}
\usepackage{amssymb, amsmath}
\usepackage{float,epsfig}
\usepackage{algpseudocode}
\usepackage{algorithm}
\usepackage{mathrsfs}
\usepackage{color}
\usepackage{graphicx}
\newtheorem{remark}[theorem]{ Remark}

\bibliographystyle{siam}

	\newcommand{\be}{\begin{equation}}
	\newcommand{\ee}{\end{equation}}

\newcommand{\beano}{\begin{eqnarray*}}
	\newcommand{\eeano}{\end{eqnarray*}}

\newcommand{\ba}{\begin{array}}
	\newcommand{\ea}{\end{array}}
\newcommand{\von}{\vskip 1ex}
\newcommand{\vone}{\vskip 2ex}


\def\bmatrix#1{\left[ \begin{matrix} #1 \end{matrix} \right]}

\def \noin{\noindent}
\def \eig{\mathrm{eig}}

\def \rank{\mathrm{rank}}
\def \nrank{\mathrm{nrank}}
\def \sig{\sigma}
\def \lam{\lambda}

\def \C{{\mathbb C}}
\def \diag{\mathrm{diag}}

\def \eip{\mathrm{eip}}

\def \N{\mathbf{N}}
\def \H{\mathbf{H}}

\begin{document}
	\pagestyle{plain}
	
	\title{Analytic matrix descriptions with application to time-delay systems}

	\author{Rafikul alam \thanks{ Corresponding author, Department of Mathematics, IIT Guwahati, Guwahati - 781039, India  ({\tt rafik@iitg.ac.in, rafikul68@gmail.com }) Fax: +91-361-2690762/2582649.  } \and  Jibrail Ali  \thanks{Department of Mathematics, IIT Guwahati, Guwahati - 781039, India ({\tt jibrail@iitg.ac.in, zibrailali@gmail.com}) }   }
	\date{}
	\maketitle

	\maketitle
	
	\begin{abstract}
	A polynomial matrix description(PMD) of a rational matrix $G(\lam)$ is a matrix polynomial of the form $$ \mathbf{P}(\lambda) := \left[\begin{array}{c|c} A(\lambda) & B(\lambda) \\ \hline -C(\lambda) & D(\lambda)\end{array}\right] \text{ such that }  G(\lam) = D(\lambda) + C(\lambda) A(\lambda)^{-1} B(\lambda),$$ where $A(\lam)$ is regular and called the state matrix.  PMDs  have been studied extensively in the context of higher order linear time-invariant systems. An analytic matrix description(AMD) of a meromorphic matrix $ M(\lam)$ is a holomorphic matrix (i.e., a holomorphic matrix-valued function) of the form $$ \mathbf{H}(\lambda) := \left[\begin{array}{c|c} A(\lambda) & B(\lambda) \\ \hline -C(\lambda) & D(\lambda)\end{array}\right] \text{ such that }  M(\lam) = D(\lambda) + C(\lambda) A(\lambda)^{-1} B(\lambda),$$ where $A(\lam)$ is regular and called the state matrix. AMDs arise in the study of linear time-invariant time-delay systems (TDS). Our aim is to develop a framework for analysis of AMDs analogous  to the framework for PMDs and discuss  the extent to  which results of PMDs  can be generalized to  AMDs. We show that important results (e.g., coprime matrix-fraction descriptions (MFDs), least order PMDs, equivalence of PMDs, canonical forms, characterization of PMDs by transfer functions, structural indices of zeros and poles)  which hold for PMDs can be generalized to AMDs which in turn can be utilized to analyze time-delay systems. 
	
	\end{abstract}

{\bf MSC(2020):} {Primary 15A54, 15A21 ; Secondary  93B60, 93B20}

{\bf Key words:}  Holomorphic matrix, meromorphic matrix, rational matrix, system matrix, transfer function, eigenvalue, zero, pole, spectrum. 

\section{Introduction}
	A polynomial matrix description(PMD) of a rational matrix $G(\lam) \in \C(\lam)^{m\times n}$ is an $(r+m)\times (r+n)$ matrix polynomial $\mathbf{P}(\lam)$ of the form \be\label{pmd} \mathbf{P}(\lambda) := \left[\begin{array}{c|c} A(\lambda) & B(\lambda) \\ \hline -C(\lambda) & D(\lambda)\end{array}\right] \text{ such that }  G(\lam) = D(\lambda) + C(\lambda) A(\lambda)^{-1} B(\lambda),\ee where $A(\lam) \in \C[\lam]^{r\times r}$ is a regular matrix polynomial  and called the state matrix.  PMDs  have been studied extensively in the context of higher order linear time-invariant systems; see~\cite{kailath, vardulakis, rosenbrock70}. In fact, $\mathbf{P}(\lam)$ is a system matrix of the linear time-invariant (LTI) system 
	\be \label{lti}	\begin{array}{l} 
				A(\frac{d}{dt})\mathbf{x}(t)  =  B(\frac{d}{dt})\mathbf{u}(t) \\
			\mathbf{y}(t) = C(\frac{d}{dt})  \mathbf{x}(t) + D(\frac{d}{dt}) \mathbf{u}(t)
	\end{array}
	\ee
with state matrix $A(\lam)$ and transfer function $G(\lam)$, where $\mathbf{x}(t) \in \C^r$ is a vector of state variables and $ \mathbf{u}(t) \in \C^n$ is a vector of control variables. In this context, the system matrix $\mathbf{P}(\lam)$ is referred to as Rosenbrock system matrix and the quadruple $[A, B, C, D]$ is referred to as PMD~\cite{ kailath, vardulakis}. We make no such distinction and refer to $\mathbf{P}(\lam)$ is a PMD. The system matrix $\mathbf{P}(\lam)$ and the transfer function $G(\lam)$ play crucial roles in the analysis of the LTI system (\ref{lti}); see~\cite{kailath, vardulakis, rosenbrock70}.

Now, consider  the LTI time-delay system (TDS)~\cite{dde1, dde2,dde3, tdsbook1, tdsbook3, tdsbook2, WN, tds2006} 
\be \label{tds} \begin{array}{l} \frac{\mathrm{d}\mathbf{x}}{\mathrm{d}t} = A_0 \mathbf{x}(t) + \sum^{N_1}_{j=1} A_j  \mathbf{x}(t - \tau_j) + \sum^{N_2}_{j=1} B_j \mathbf{u}(t- t_j), \\ \mathbf{y}(t) = \sum^{N_3}_{j=1} C_j \mathbf{x}(t-s_j) + \sum^{N_4}_{j=1} D_j \mathbf{u}(t-h_j)\end{array}, \ee
where $\mathbf{x}(t) \in \C^r$ is a vector of state variables and $ \mathbf{u}(t) \in \C^n$ is a vector of control variables.  
Further, $ 0< \tau_1< \cdots < \tau_{N_1}$, $ 0 \leq t_1 < \cdots < t_{N_2}$, $ 0\leq s_1 < \cdots < s_{N_3}$ and $ 0\leq h_1 < \cdots < h_{N_4}$ are delay parameters. Furthermore, $A_0, \ldots, A_{N_1}$  are matrices in $\C^{r\times r}$, $B_1, \ldots, B_{N_2}$ are matrices in $\C^{r\times n}$,  $C_1, \ldots, C_{N_3}$ are matrices in  $ \C^{m\times r}$  and $ D_1, \ldots, D_{N_4}$ are matrices $\C^{m\times n}$.

	For the ansatz $ \bmatrix{ \mathbf{x}(t) \\ \mathbf{u}(t)} :=  \bmatrix{ \mathbf{x} \\ -\mathbf{u}} e^{\lam t}$ with $\mathbf{x} \in \C^{r}$ and $ \mathbf{u} \in \C^{n},$ the TDS yields 
 \be \label{tds2}\left[\begin{array}{c|c} A(\lambda) & B(\lambda) \\ \hline -C(\lambda) & D(\lambda)\end{array}\right] \left[\begin{array}{c} \mathbf{x} \\ -\mathbf{u}\end{array}\right] e^{\lambda t} = \left[\begin{array}{c} 0 \\ -\mathbf{y}(t)\end{array}\right] \ee and eliminating $\mathbf{x}(t)$, we have 	 $$ [D(\lambda) + C(\lambda) A(\lambda)^{-1} B(\lambda)] \mathbf{u} e^{\lambda t} =  \mathbf{y}(t),$$ where   $ A(\lambda)  := \lambda I_r - A_0 - \sum^{N_1}_{j=1} A_j e^{-\lambda \tau_j},$ $  B(\lambda) := \sum^{N_2}_{j=1} B_j e^{-\lambda t_j},$ $C(\lambda) := \sum^{N_3}_{j=1} C_j e^{-\lambda s_j}$ and $ D(\lambda) := \sum^{N_4}_{j=1} D_j e^{-\lambda h_j}$ are holomorphic matrices (i.e., holomorphic  matrix-valued functions). 
 	The holomorphic matrix   \be \label{sysmat} \mathbf{H}(\lam) := \left[\begin{array}{c|c} A(\lambda) & B(\lambda) \\ \hline -C(\lambda) & D(\lambda)\end{array}\right]_{(r+m)\times (r+n)}   \ee is a system matrix of the TDS  and the meromorphic matrix (i.e., meromorphic matrix-valued function)  $$M(\lam ) := D(\lam) + C(\lam) A(\lam)^{-1} B(\lam)$$ is the transfer function of the TDS. We refer to $\H(\lam)$ as an analytic matrix description (AMD) and $M(\lam)$ as the transfer  function of $\H(\lam).$

 It follows from (\ref{tds2}) that  $\mathbf{y}(t) = 0 $ whenever $  \H(\lam) \bmatrix{ \mathbf{x} \\ - \mathbf{u} } = 0.$ Alternatively,  $\mathbf{y}(t) = 0 $ whenever $M(\lam) \mathbf{u} = 0,$ that is, when $(\lam, \mathbf{u})$ is an eigenpair of $M(z).$ 
 This shows that the zero output of the TDS can be analyzed via eigenvalues and eigenvectors of the system matrix $\mathbf{H}(\lam)$ as well as eigenvalues and eigenvectors  of the transfer function $M(\lam).$ Thus, as in the case of LTI system (\ref{lti}), a system matrix $\mathbf{H}(\lam)$ and the transfer function $M(\lam)$ are expected to play crucial roles in the analysis of the TDS (\ref{tds}).

 		Notice that  $M(\lam)$ is a transcendental meromorphic matrix and hence has an essential singularity at infinity whereas the transfer function $G(\lam)$ of the PMD  $\mathbf{P}(\lam)$ is a rational matrix and has a pole at infinity. Therefore, important concepts such as the least order (i.e., total number of finite poles counting multiplicity)  and McMillan degree (i.e., total number of finite and infinite poles counting multiplicity) of $G(\lam)$, which play important roles in the analysis of the LTI system (\ref{lti}), are well defined for PMDs. By contrast, $M(\lam)$ has  either a finite number of poles in $\C$  and an isolated essential singularity at infinity or an infinite number of poles in $\C$ and a non-isolated essential singularity at infinity. 
 	
 	 For instance, the LTI system with a single control delay given by  
 	 \beano \frac{d\mathbf{x}}{dt} &=& A\mathbf{x}(t) +  B \mathbf{u}(t-\tau), \; t >0  \\ \mathbf{y}(t) &=& C \mathbf{x}(t) \eeano
 	 has been studied in~\cite{tdsbook2}. The system matrix and the transfer function are given by
 	 \be \label{cdelay} \H(z) := \left[ \begin{array}{c|c} z I_n -A  & Be^{-\tau z} \\ \hline -C & 0 \end{array} \right] \text{ and } M(z) = C ( z I_n -A)^{-1}B e^{-\tau z}.\ee In this case,  $M(z)$ is a transcendental meromorphic matrix with a finite number of poles in $\C$ and an isolated essential singularity at infinity.  On the other hand,  the  LTI system with a single state delay given by 
 	\beano \frac{d\mathbf{x}}{dt} &=& A\mathbf{x}(t) + A_d \mathbf{X}(t-\tau) + B \mathbf{u}(t), \; t >0  \\ \mathbf{y}(t) &=& C \mathbf{x}(t) \eeano
 	has been studied in~\cite{tdsbook1} using matrix Lambert W-function. The system matrix and the transfer function  are given by
 	\be \label{sdelay} \H(z) := \left[ \begin{array}{c|c} z I_n -A- A_d e^{-\tau z} & B \\ \hline -C & 0 \end{array} \right] \text{ and } M(z) = C ( z I_n -A- A_d e^{-\tau z})^{-1}B. \ee  In this case, $M(z)$ is a transcendental meromorphic matrix with an infinite number of poles in $\C$ and a non-isolated  essential singularity at infinity.

 	The TDS (\ref{tds}) is analyzed in \cite{dde1, dde2} in the special case when $ \tau_j = t_j = s_j =jh $ and $ D(\lam) = 0$. Introducing a new variable $ \mu := e^{-h \lam}$ so that $ \mu^j = e^{-jh \lam}$, the system matrix $\H(\lam)$ is considered as a matrix polynomial in two variables $\H(\lam, \mu)$, which introduces its own complexity,  and utilized Smith form of $\H(\lam, \mu)$ for analyzing the TDS.

  Motivated by the TDS (\ref{tds}), we undertake a comprehensive analysis of AMDs and matrix-fraction descriptions (MFDs) of meromorphic matrices. 	PMDs and MFDs of rational matrices  provide a powerful framework for analysis of LTI systems; see~\cite{rosenbrock70, kailath, vardulakis}. Our aim is to develop an analogous  framework for TDS and discuss  the extent to  which results of PMDs  can be generalized to  AMDs. 	We show that important results (e.g., coprime MFDs, least order  PMDs, equivalence of PMDs, canonical forms, characterization of PMDs by transfer functions, structural indices of zeros and poles)  which hold for PMDs can be generalized to AMDs. We mention that the generalizations are not routine and some results require special care. For instance, we show that the least order of $G(\lam)$ can indeed be generalized to $M(\lam)$ even when $M(\lam)$ has an infinite number poles and  show that the least order of $M(\lam)$ is directly related to the minimality  the AMD $\H(\lam).$ On the other hand, it is not clear how to generalize McMillan degree of $G(\lam)$ to $M(\lam)$ and what implication it has on the TDS (\ref{tds}). Further, the analysis of singular structures of $M(\lam)$ and $\H(\lam)$, namely, analogues of minimal bases and minimal indices, remains an open problem.

The rest of the paper is organized as follows. Section~2 is devoted to preliminaries which are needed in the rest of the paper. Section~3 analyzes matrix-fraction descriptions of meromorphic matrices and introduces least order of  a transcendental meromorphic matrix. Finally, Section~4 presents a detailed analysis of AMDs, their equivalence and characterization, canonical forms, zeros and poles including  their structural indices, and system of least order.

{\bf Notation:}  Let $\C[z]$ denote the ring of scalar polynomials with coefficients in $\C$  and  $\C(z)$ denote the field of rational functions of the form  $p(z)/ q(z),$ where $p(z)$ and $q(z)$ are  polynomials in $\mathbb{C}[z].$  We denote by $\C^{m\times n}, \mathbb{C}[z]^{m\times n}$ and  $\mathbb{C}(z)^{m\times n}$, the set of all $m\times n$ matrices with entries in $\C, \; \C[z]$ and $ \C(z)$, respectively. The elements of $\C[z]^{m\times n}$ are called matrix polynomials and the elements of $\C(z)^{m\times n}$ are called rational matrices. The $m\times n$ zero matrix is denoted by $0_{m\times n}$  as well as $0_{m, n}.$ The $n\times n$ identity matrix is denoted by $I_n$.

\section{Preliminaries}
Let $\mathcal{O} \subset \C$ be a {\em domain}, that is, $\mathcal{O}$ is a nonempty connected open set. Let $ X$ be a complex Banach space. A function $ f : \mathcal{O} \longrightarrow X$ is said to be holomorphic (or analytic) in $\mathcal{O}$ if $f$ is differentiable on $\mathcal{O}.$ We denote the set of all $X$-valued holomorphic (resp., meromorphic)  functions on $\mathcal{O}$ by $\mathbb{H}(\mathcal{O}, X)$ (resp., $\mathbb{M}(\mathcal{O})$), that is, 
\beano \mathbb{H}(\mathcal{O}, X) &:=& \{ f : \mathcal{O} \longrightarrow X\; \big \vert \; f \text{ is holomorphic in } \mathcal{O}\}, \\ 
\mathbb{M}(\mathcal{O}, X) &:=& \{ f : \mathcal{O} \longrightarrow X\; \big \vert \; f \text{ is meromorphic in } \mathcal{O}\}.
\eeano
Let $ \Omega \subset \C$ be a {\em region}, that is, the interior of $\Omega$ is a domain and $\Omega$ possibly contains a part or all of its boundary $\partial\Omega.$   A function $ f : \Omega \longrightarrow X$ is said to be holomorphic (resp., meromorphic)  in $\Omega$ if $ f \in \mathbb{H}(\mathcal{O}, X)$ (resp., $ f \in \mathbb{M}(\mathcal{O}, X)$) for some open set $ \mathcal{O} \subset \C$ such that $ \Omega \subset \mathcal{O}.$ We denote the set of all $X$-valued holomorphic (resp., meromorphic) functions on $\Omega$ by $\mathbb{H}(\Omega, X)$ (resp., $\mathbb{M}(\Omega, X)$).

For  simplicity of notation, when $ X = \C, X = \C^n$ and $ X = \C^{m\times n},$ we set  
\beano  \mathbb{H}(\Omega) &:=& \mathbb{H}(\Omega, \C),\;  \mathbb{H}(\Omega)^n :=  \mathbb{H}(\Omega, \C^n) \text{ and } \mathbb{H}(\Omega)^{m\times n} := \mathbb{H}(\Omega, \C^{m\times n}),\\
 \mathbb{M}(\Omega) &:=& \mathbb{M}(\Omega, \C),\;  \mathbb{M}(\Omega)^n :=  \mathbb{M}(\Omega, \C^n) \text{ and } \mathbb{M}(\Omega)^{m\times n} := \mathbb{M}(\Omega, \C^{m\times n}).\eeano 
We refer to the elements of $\mathbb{H}(\Omega)^n$ (resp., $\mathbb{M}(\Omega)^n$) as  holomorphic (resp., meromorphic) vectors  and the elements of $\mathbb{H}(\Omega)^{m\times n}$ (resp., $\mathbb{M}(\Omega)^{m\times n}$) as  holomorphic (resp., meromorphic)  matrices. Note that $ \mathbb{H}(\Omega) $ is a commutative ring with unity and $ \mathbb{H}(\Omega)^n $ is a module over the ring $ \mathbb{H}(\Omega).$ On the other hand, $ \mathbb{M}(\Omega) $  is the quotient field of the integral domain $\mathbb{H}(\Omega)$~(see,\cite{jali, rafiams, rmt}) and $ \mathbb{M}(\Omega)^n $ is a vector space over the field $ \mathbb{M}(\Omega).$

 The span of a subset $ \{ f_1, \ldots, f_{\ell}\} \subset \mathbb{H}(\Omega)^n$ over $\mathbb{H}(\Omega)$ and $\mathbb{M}(\Omega)$ are given by \beano \mathrm{span}_{\mathbb{H}(\Omega)}(f_1, \ldots, f_\ell) &:=& \{ g_1f_1 + \cdots+ g_\ell f_\ell : \;\;  g_1, \ldots, g_\ell \in  \mathbb{H}(\Omega)\}, \\ \mathrm{span}_{\mathbb{M}(\Omega)}(f_1, \ldots, f_\ell) &:=& \{ g_1f_1 + \cdots+ g_\ell f_\ell : \;\;  g_1, \ldots, g_\ell \in  \mathbb{M}(\Omega)\}.\eeano Obviously, $ \mathrm{span}_{\mathbb{H}(\Omega)}(f_1, \ldots, f_\ell)$ is a submodule of $\mathbb{H}(\Omega)^n$  and $ \mathrm{span}_{\mathbb{M}(\Omega)}(f_1, \ldots, f_\ell)$ is a subspace of $\mathbb{M}(\Omega)^n.$  A subset $ \{ f_1, \ldots, f_{\ell}\} \subset \mathbb{H}(\Omega)^n$ is said to be {\em linearly independent} in  $\mathbb{H}(\Omega)^n$ (resp.,  $\mathbb{M}(\Omega)^n$) if $g_1, \ldots, g_{\ell}$ in  $\mathbb{H}(\Omega)$ (resp., $\mathbb{M}(\Omega)$) and $ g_1f_1 + \cdots+ g_\ell f_\ell  =0$ then $ g_1 = \cdots = g_\ell = 0.$ It is easy to see~\cite{jali, rafiams} that $ \{ f_1, \ldots, f_{\ell}\} \subset \mathbb{H}(\Omega)^n$ is linearly independent in $\mathbb{H}(\Omega)^n$ if and only if  $ \{ f_1, \ldots, f_{\ell}\}$ is linearly independent in $\mathbb{M}(\Omega)^n$. Hence we have $$ \dim \mathrm{span}_{\mathbb{H}(\Omega)}(f_1, \ldots, f_\ell) = \dim \mathrm{span}_{\mathbb{M}(\Omega)}(f_1, \ldots, f_\ell).$$ Thus,  we can talk about linear independence of $ \{ f_1, \ldots, f_{\ell}\} \subset \mathbb{H}(\Omega)^n$ without mentioning either $\mathbb{H}(\Omega)^n$ or $\mathbb{M}(\Omega)^n.$ 
 
 \vone 
 
 \begin{definition} 
 	Let $ \mathcal{V} \subset \mathbb{M}(\Omega)^n$ be a subspace. Then  $ \mathcal{B} \subset \mathbb{H}(\Omega)^n$ is said to be an analytic basis of $\mathcal{V}$ if $\mathcal{B}$ is linearly independent and $\mathrm{span}_{\mathbb{M}(\Omega)}(\mathcal{B}) = \mathcal{V}.$  The dimension of $\mathcal{V}$ is the number of elements in $\mathcal{B}$ and is denoted by $\dim_{\mathbb{M}(\Omega)}(\mathcal{V}).$ 
 \end{definition} 
 
 \vone

 Let $  A \in \mathbb{H}(\Omega)^{m\times n}$ or  $  A \in \mathbb{M}(\Omega)^{m\times n}$. Then  $ A : \mathbb{M}(\Omega)^n \longrightarrow \mathbb{M}(\Omega)^m, f \longmapsto Af,$ is a linear transformation. The null space $N(A)$ and range space $R(A)$ are given by  $$ N(A) := \{ f \in \mathbb{M}(\Omega)^n : Af = 0\} \subset \mathbb{M}(\Omega)^n \text{ and } R(A) := \{ A f :  f \in \mathbb{M}(\Omega)^n\} \subset \mathbb{M}(\Omega)^m.$$   Then  $ \rank_{\mathbb{M}(\Omega)}(A) := \dim_{\mathbb{M}(\Omega)}( R(A))$ is called the {\em normal rank} of $A$ and is denoted by $ \nrank(A).$  Equivalently, $\nrank(A)$ is the number of linearly independent columns of  $A.$ Similarly, $\mathrm{nullity}(A) := \dim_{\mathbb{M}(\Omega)} (N(A))$ is called the {\em nullity} of $A.$ 
 The matrix $A$ is said to be {\em regular} if  $  \nrank(A) = m=n.$ On the other hand, $A$ said to be {\em singular} if $A$ is not regular.  Hence  $A$ is  singular when  $m\neq n$ or when $\nrank(A) < \min(m, n).$ In particular, if $ A \in \mathbb{H}(\Omega)^{m\times n}$ then  $\nrank(A) = \max\{ \rank(A(\omega)) : \; \omega \in \Omega\}.$

 \vone


\begin{definition} 
  Let $ A \in\mathbb{H}(\Omega)^{n\times n}.$ Then $A$ is said to be  invertible  if there exists $B \in \mathbb{H}(\Omega)^{n\times n}$ such that $ AB = BA = I_n,$ that is, $ A(z) B(z) = B(z)A(z) = I_n \; \text{ for all } \; z \in \Omega.$   In such a case, $B$ is called the inverse of $A$ and is denoted as $ A^{-1}.$  
\end{definition}

\von 
Note that if $A$ is invertible  then $ A^{-1}(z) = (A(z))^{-1}$ for all $z \in \Omega.$  Let  $\mathrm{GL}_n(\mathbb{H}(\Omega)) $ denote the group of invertible elements in $ \mathbb{H}(\Omega)^{n\times n}$, that is,
$$ \mathrm{GL}_n(\mathbb{H}(\Omega)) := \{ A \in \mathbb{H}(\Omega)^{n\times n} \; : \; A \text{ is invertible }\}.$$ 
The elements in $\mathrm{GL}_n(\mathbb{H}(\Omega)) \subset  \mathbb{H}(\Omega)^{n\times n}$ are called units or unit elements of $\mathbb{H}(\Omega)^{n\times n}.$ Note that the unit elements of $\C[z]^{n\times n}$ are unimodular matrix polynomials. A matix polynomial $P(z) \in \C[z]^{n\times n}$ is unimodular if $ \det(P(z)) $ is a nonzero constant~\cite{GLR}.

\vone

\begin{remark} \label{rem:rank} Let $ B :=\bmatrix{ f_1& \cdots & f_\ell} \in \mathbb{H}(\Omega)^{n \times \ell}.$ Then $\{f_1, \ldots, f_\ell\}$ is linearly independent $\Longleftrightarrow \nrank(B) = \ell.$ Hence $B$ is an analytic ordered basis of $R(B) \Longleftrightarrow \nrank(B) = \ell.$  Further, $B$ is said to be an invertible basis of $R(B)$ if $B$ is left invertible, that is, there exist $ B_L \in \mathbb{H}(\Omega)^{\ell \times n}$ such that $ B_L B = I_\ell$, that is, $ B_L(z) B(z) = I_\ell$ for all $ z \in \Omega.$
\end{remark} 

\von

\vone 
\begin{definition}[spectrum] Let $ A \in \mathbb{H}(\Omega)^{m\times n}$.   Then $\lam \in \Omega$ is said to be an eigenvalue of $A$ if $ \rank(A(\lam)) < \nrank(A).$  	The spectrum $\sig_{\Omega}(A)$ of $A$ is given by 
	$$\sig_{\Omega}(A) := \{ \lam \in \Omega : \rank(A(\lam)) < \nrank(A) \}. $$ The resolvent set $\rho_{\Omega}(A)$ is given by  $ \rho_{\Omega}(A) := \Omega \setminus \sig_{\Omega}(A).$
\end{definition}

\vone Note that if $A$ is regular then  $ \sig_{\Omega}(A) := \{ z \in \Omega : \det(A(z)) = 0 \}.$  Also, note that $ A \in  \mathrm{GL}_n(\mathbb{H}(\Omega))  \Longleftrightarrow \sig_{\Omega}(A) = \emptyset.$ For a nonzero scalar function $ f \in \mathbb{H}(\Omega) $ the spectrum $\sig_{\Omega}(f)$ is the set of zeros of $ f$, that is, $$\sig_{\Omega}(f) = \{ z \in \Omega : f(z) = 0\}.$$ Thus $\sig_{\Omega}(f) = \emptyset \Longleftrightarrow f$ is a unit element of $\mathbb{H}(\Omega) \Longleftrightarrow 1/f \in \mathbb{H}(\Omega).$ It is well known~\cite{alf} that $\sig_{\Omega}(f)$ is at most a countable set with no accumulation points in $\Omega.$ Consequently, $\sig_{\Omega}(A)$ is at most a countable set with no accumulation points in $\Omega$ when $ A \in \mathbb{H}(\Omega)^{n\times n}$ is regular.  In general, if $ A \in \mathbb{H}(\Omega)^{m\times n}$ and $ \nrank(A) = r$ then by considering common zeros of all $r\times r$ minors of $A$, it is follows $\sig_{\Omega}(A)$ is at most a countable set with no accumulation points in $\Omega.$

\vone 

Let $ M \in \mathbb{M}(\Omega)^{m\times n}.$ We denote the set of poles of $M$ by $\wp_{\Omega}(M)$, that is,
  $$ \wp_{\Omega}(M) := \{ \lam \in \Omega : \lam \text{ is a pole of } M(z)\}.$$  Obviously $ \wp_{\Omega}(M)$ is at most a countable set with no accumulation points in $\Omega.$ \\

\begin{definition}  Let $ M \in \mathbb{M}(\Omega)^{m\times n}.$ Then $ \lam \in \Omega$ is said to be an  eigenvalue of $M$ if $ \lam \notin \wp_{\Omega}(M)$ and $ \rank(M(\lam)) < \nrank(M).$   The  eigenspectrum $\eig_{\Omega}(M)$  of $M$ is given  by $$ \eig_{\Omega}(M) := \{ \lam \in \Omega : \;\; \lam \notin \wp_{\Omega}(M) \text{ and } \rank(M(\lam)) < \nrank(M)\}.$$

Let $ \mu \in \wp_{\Omega}(M).$ Then $ \mu$ said to be an   eigenpole of $M$ if there exists $ \mathbf{f} \in \mathbb{H}(\Omega)^n$ such that $$ \mathbf{f}(\mu) \neq 0, \mathbf{f} \notin N(M) \text{ and  } \lim_{z\rightarrow \mu} M(z)\mathbf{f}(z)  =0.$$

We denote the set of eigenpoles of $M$ by $\eip_{\Omega}(M).$ Then the spectrum $\sig_{\Omega}(M)$ of $M$ is given by  $ \sig_{\Omega}(M) := \eig_{\Omega}(M) \cup \eip_{\Omega}(M).$ If $ \lam \in \sig_{\Omega}(M)$ then $\lam$ is said to be a  zero of $M$ in $\Omega.$
\end{definition}

\vone Note that $\sig_{\Omega}(A)$ is at most a countable set with no accumulation points in $\Omega.$ We now consider analytic equivalence of holomorphic/meromorphic  matrices. See~\cite{ggk,gkl} for analytic equivalence of holomorphic operator-valued functions. \von

\begin{definition}\label{heqv}(a) Let $ U \subset \Omega$ be open and $ A, B \in \mathbb{M}(\Omega)^{m\times n}.$ Then $A$  and $B$ are said to be  equivalent  on $U$ and written as  $ A \sim_U B$ or $A(z) \sim_U B(z)$ if there exist  $E \in \mathrm{GL}_m(\mathbb{H}(U))$ and $ F \in \mathrm{GL}_n(\mathbb{H}(U))$ such that  $ A  = EBF$,     that is,  $A(z) =E(z) B(z) F(z)$   for all $ z \in U.$ 
	
(b)  Let $ \lam \in \Omega$  and $ A, B \in \mathbb{M}(\Omega)^{m\times n}.$ Then $A$  and $B$ are said to be equivalent  at $\lam$ and  written as	 $ A \sim_{\lam} B$ or $ A(z) \sim_{\lam} B(z)$  if there exists an open neighbourhood $U\subset \Omega$ of $\lam$ such that 
 $ A\sim_U B.$
\end{definition}

\vone 	
	
Observe that if  $ A \sim_{\Omega} B$    then  $\nrank(B) = \nrank(A).$  Hence   $A\sim_{\Omega} B \Longrightarrow \sig_{\Omega}(A) = \sig_{\Omega}(B)$ and $  \wp_{\Omega}(A) =  \wp_{\Omega}(B).$
	
\vone 

Let $  f \in \mathbb{H}(\Omega)$ and $ \lam \in \sig_{\Omega}(f).$ A positive integer $ \ell$  is said to be the order  of $\lam$ as a zero of $f$ if $ f^{(\ell)}(\lam) \neq 0$ and $ f^{(j)}(\lam) = 0 $ for $ j=1:\ell-1.$ Also, $\ell$ is called the multiplicity of $\lam$ as a zero of $f.$ On the other hand, if $ \mu \in \Omega$ and $ f(\mu) \neq 0$ then  $ \mu$ is said to be a zero of $ f$ of order $0.$ Thus $ \mu $ is a zero of $f$ of order $0 \Longleftrightarrow \mu \in \rho_{\Omega}(f).$ 
\von

We denote the set of integers by $\mathbb{Z}$ and the  set of non-negative integers by $ \mathbb{Z}_+$. 
Let  $ \nu : \Omega \longrightarrow \mathbb{Z}$. The support of $\nu$ denoted by $\mathrm{supp}(\nu) $  is given by $$ \mathrm{supp}(\nu) := \{ z \in \Omega: \nu(z) \neq 0 \}.$$  The support of $\nu$ is said to be {\em locally finite} if $ \mathrm{supp}(\nu) \cap \mathrm{K}$ is a finite set for every compact set $ \mathrm{K} \subset \Omega$~\cite{rmt}.

\begin{definition}[divisor,\cite{rmt}] A function   $ \nu : \Omega \longrightarrow \mathbb{Z}$  is called a divisor on $\Omega$ if $\mathrm{supp}(\nu)$   is locally finite.  A divisor $\nu$ is called a  principal divisor  if there exists $f \in \mathbb{M}(\Omega)$ such that 
$$ \nu(z) = \left\{ \begin{array}{rl} p & \text{ if } p \text{ is the order of } z \text{ as a zero of } f,\\ -p &   \text{ if } p \text{ is the order of } z \text{ as a pole of } f. \end{array}   \right.$$

\end{definition} 
\vone 

The set of all divisors on $\Omega$ is an additive abelian group~\cite{jali,rafiams, rmt}.
Let $f , g \in \mathbb{H}(\Omega)$. Then $f$ is called a divisor of $g$ if $g/f \in \mathbb{H}(\Omega),$ that is, there exist $h \in \mathbb{H}(\Omega)$ such that $  g = f h.$ A nonzero non-unit $u \in \mathbb{H}(\Omega)$ is called a prime of $\mathbb{H}(\Omega)$ if $u$ divides $fg$ for some $ f, g \in \mathbb{H}(\Omega)$ then $u$ divides $f$ or $u$ divides $g.$ The functions $\phi_\lam: z \mapsto (z- \lam)$ for  $ \lam \in \Omega,$ up to unit factors, are the primes of $\mathbb{H}(\Omega).$  

\vone 
\begin{remark} 
If $ \nu$ is the principal divisor of $f \in \mathbb{M}(\Omega)$ then $f $ is holomorphic $\Longleftrightarrow \nu \geq 0.$  Further, if $ f_1, f_2 \in \mathbb{H}(\Omega) $ with principal divisors $\nu_1$ and $\nu_2,$ then $ f_2/f_1 \in \mathbb{H}(\Omega) \Longleftrightarrow \nu_1 \leq \nu_2.$   Note that $\nu_{\min} := \min(\nu_1, \nu_2)$ and $\nu_{\max} := \max( \nu_1, \nu_2)$ are divisors on $\Omega.$  If $f \in \mathbb{H}(\Omega)$ with the principal divisor  $\nu_{\min} := \min(\nu_1, \nu_2)$ then $ f = \mathrm{gcd}(f_1, f_2).$  Hence $f_1$ and $f_2$ are coprime $\Longleftrightarrow  \nu_{\min} =0 \Longleftrightarrow \mathrm{gcd}(f_1, f_2)$ is a unit in  $\mathbb{H}(\Omega).$ Similarly, if $g \in \mathbb{H}(\Omega)$ with the principal divisor  $\nu_{\max} := \min(\nu_1, \nu_2)$ then $ g = \mathrm{lcm}(f_1, f_2).$ Further, $f, g \in \mathbb{M}(\Omega)$  have the same principal divisor $\nu$ on $\Omega \Longleftrightarrow g = uf $ for some unit element $u$ of $ \mathbb{H}(\Omega).$ 
See~\cite{jali, rafiams, rmt} for details.
\end{remark} 
\vone 

%
%

\begin{definition} \cite{rmt}
An infinite  product $\prod^\infty_{n=1} f_n$ with $ f_n = 1+g_n \in \mathbb{H}(\Omega)$ is said to be  normally convergent in $\Omega$ if the series $\sum^\infty_{n=1} g_n$ converges absolutely on every compact set $\mathrm{K} \subset \Omega.$ \\
\end{definition}

\begin{theorem}[Smith form, \cite{rafiams}]\label{hsmith} Let $ A \in \mathbb{H}(\Omega)^{m\times n}.$ Suppose that $nrank(A) = r$.   Then there exist holomorphic functions $ \phi_1, \ldots, \phi_r$ in $\mathbb{H}(\Omega) $ with principal divisors $\nu_1, \ldots, \nu_r $ on $\Omega$ such that $\nu_1 \leq \cdots \leq \nu_r$ and 
	$$ A(z) \sim_{\Omega} \left[\begin{array}{ccc|c}\phi_1(z) & & & \\ & \ddots& &  \\ & & \phi_r(z) & \\ \hline & & & \mathbf{0}_{m-r, n-r} \end{array}\right] =: S_A(z).$$ The functions $\phi_1, \ldots, \phi_r$ are unique up to unit elements of $ \mathbb{H}(\Omega)$ and $ \phi_j$ divides $\phi_{j+1}$ for $ j=1:r-1.$ Further, let $ \sig_{\Omega}(A) = \{ z_n : n \in \mathbb{N}\}.$ Then 	$$ \phi_j(z) =  \prod^\infty_{\ell=1} (z- z_\ell)^{\nu_j(z_\ell)}u_{j\ell}(z) \;\text{ for all } z \in \Omega \text{ and } j=1:r,$$ where $ u_{j\ell} \in \mathbb{H}(\Omega)$ are unit elements, $\ell \in \mathbb{N}$    and $\prod^\infty_{\ell=1} (z- z_\ell)^{\nu_j(z_\ell)}u_{j\ell}(z)$  (possibly empty product)  converges normally in $\Omega$.  
\end{theorem}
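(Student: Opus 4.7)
The plan is to adapt the classical Smith normal form algorithm over a principal ideal domain to the Bezout-type domain $\mathbb{H}(\Omega)$. The key structural fact, implicit in the divisor remarks above, is that for any finite set $f_1,\ldots,f_s \in \mathbb{H}(\Omega)$ with principal divisors $\nu_1,\ldots,\nu_s$ there exists a greatest common divisor $d \in \mathbb{H}(\Omega)$, unique up to units, with principal divisor $\min(\nu_1,\ldots,\nu_s)$, together with $a_1,\ldots,a_s \in \mathbb{H}(\Omega)$ satisfying $a_1 f_1 + \cdots + a_s f_s = d$; moreover, this Bezout identity can be completed to an invertible matrix $E \in \mathrm{GL}_s(\mathbb{H}(\Omega))$ whose first row is $(a_1,\ldots,a_s)$.

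First I would prove existence by induction on $\min(m,n)$. Let $\phi_1 \in \mathbb{H}(\Omega)$ be a gcd of all entries of $A$. Successively applying the Bezout identity to the first column and then the first row yields $E_1 \in \mathrm{GL}_m(\mathbb{H}(\Omega))$ and $F_1 \in \mathrm{GL}_n(\mathbb{H}(\Omega))$ such that $E_1 A F_1$ has $\phi_1$ in position $(1,1)$ and zeros elsewhere in the first row and column. Since $\phi_1$ divides every entry of $A$, it divides every entry of the resulting $(m-1)\times(n-1)$ block $A'$, and the inductive Smith form of $A'$ is $\mathrm{diag}(\phi_2,\ldots,\phi_r)$ with $\phi_1 \mid \phi_2 \mid \cdots \mid \phi_r$. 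Assembling $E = \mathrm{diag}(1,E')E_1$ and $F = F_1\,\mathrm{diag}(1,F')$ gives $A \sim_\Omega S_A$, and the constraint $\nrank(S_A)=r$ forces exactly $r$ diagonal entries to be nonzero.

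For uniqueness, I would invoke the invariant factor argument. For $1\le k \le r$ let $d_k(A) \in \mathbb{H}(\Omega)$ denote a gcd of all $k\times k$ minors of $A$. By the Cauchy--Binet formula, each $k\times k$ minor of $EAF$ is an $\mathbb{H}(\Omega)$-linear combination of the $k\times k$ minors of $A$ and conversely, so $d_k(EAF)$ and $d_k(A)$ differ by a unit whenever $E$ and $F$ are units. Evaluating $d_k$ on the diagonal form gives $d_k = \phi_1\cdots\phi_k$ up to units, so $\phi_j = d_j/d_{j-1}$ (with $d_0=1$) is determined up to units, which also fixes the principal divisors $\nu_j$ and hence the ordering $\nu_1 \le \cdots \le \nu_r$.

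Finally, the product formula follows from the Weierstrass product theorem on $\Omega$: because $\mathrm{supp}(\nu_j) \subset \sigma_\Omega(A)$ is at most countable with no accumulation points in $\Omega$, there is a normally convergent product $\prod_\ell (z-z_\ell)^{\nu_j(z_\ell)} u_{j\ell}(z)$ with unit factors $u_{j\ell} \in \mathbb{H}(\Omega)$ whose principal divisor is $\nu_j$; the uniqueness clause then identifies this product with $\phi_j$ up to a unit. The main obstacle I anticipate is the Bezout-with-unit-completion step: unlike in $\mathbb{C}[z]$, the entries of $A$ may have infinite zero sets, so one cannot simply run the Euclidean algorithm. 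This is resolved by first extracting the common gcd factor via a Weierstrass product so that the reduced entries have no common zero, and then using the fact that a tuple of holomorphic functions with no common zero on $\Omega$ generates the unit ideal of $\mathbb{H}(\Omega)$ and admits a unit completion through iterated $2\times 2$ Bezout reductions.
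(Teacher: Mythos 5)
The paper does not prove this theorem; it imports it from \cite{rafiams}, so your proposal has to stand on its own. Its overall architecture is the right one, and two of its three parts are sound: the uniqueness argument via determinantal divisors $d_k(A)=\gcd$ of the $k\times k$ minors and Cauchy--Binet works verbatim over $\mathbb{H}(\Omega)$ (for $k\le r=\nrank(A)$ some $k\times k$ minor is a nonzero holomorphic function, so the gcd exists as the Weierstrass product with divisor $\min$ of the minors' divisors), and the product formula for $\phi_j$ is exactly the Weierstrass factorization of a holomorphic function whose zero divisor $\nu_j$ has locally finite support.

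The genuine gap is in the existence step. You assert that one pass of Bezout reductions on the first column and then the first row produces $\phi_1=\gcd(\text{all entries})$ in position $(1,1)$ with zeros elsewhere in the first row and column. That is not what a single pass gives: clearing the column places $\gcd(\text{first column})$ at $(1,1)$, which is in general a proper multiple of $\phi_1$; clearing the row then generically reintroduces nonzero entries in the first column. The classical fix is to iterate, observing that the $(1,1)$ entry strictly decreases in the divisibility order, and to invoke Noetherianity to terminate --- but $\mathbb{H}(\Omega)$ is not Noetherian, and a strictly decreasing chain of nonnegative divisors with locally finite support need not stabilize (e.g.\ $\delta_k:=\sum_{\ell>k}\mathbf{1}_{\{z=\ell\}}$ on $\Omega=\C$). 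The obstacle you flag at the end (unimodular row completion over a Bezout domain) is the easy part; the missing ingredient is the \emph{adequacy} of $\mathbb{H}(\Omega)$: given $a,b\in\mathbb{H}(\Omega)$ with $a\neq0$, split the divisor of $a$ into the part supported on the zero set of $b$ and the rest, and realize both parts by Weierstrass products, so that $a=a'a''$ with $\gcd(a',b)=1$ and every non-unit divisor of $a''$ sharing a zero with $b$. By Helmer's argument (equivalently Kaplansky's criterion), an adequate Bezout domain is an elementary divisor ring, and this is what lets the reduction terminate in a bounded number of steps and brings the global gcd to the $(1,1)$ position. Without this lemma, or some substitute such as a local-Smith-form-plus-Mittag-Leffler gluing argument, the induction does not get off the ground.
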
 

\vone

The diagonal matrix  $S_A(z)$ is the {\em  Smith  form} of $A(z)$ on $\Omega.$ The functions $\phi_1, \ldots, \phi_r$ are the {\em invariant functions}  of $A(z)$ on $\Omega.$   If $\phi_j$ is not a unit element then it is called a non-unit invariant function or a non-unit invariant factor of $A(z)$. Define $ \phi_A(z) := \prod^r_{j=1}\phi_j(z)$ for $ z \in \Omega.$ Then $\phi_A$  is unique up to a unit element of $\mathbb{H}(\Omega)$ and is called the {\em zero function} of $A(z)$  on $\Omega.$  It follows that $ \lam \in \Omega$ is an eigenvalue value of $A(z) \Longleftrightarrow \phi(\lam) =0.$ Hence $\sig_{\Omega}(A) = \{ \lam \in \Omega : \phi_A(\lam) = 0\}.$  The tuple $(\nu_1, \ldots, \nu_r)$ is the {\em structural invariant of zeros} of $A(z)$ which yields the structural indices of zeros of $A(z).$

{\bf Zero index:} The tuple $ \mathrm{Ind}_e(\lam, A) := ( \nu_1(\lam), \ldots, \nu_r(\lam)) \in \mathbb{Z}_+^r$ for $ \lam \in \sig_{\Omega}(A)$ is called the {\em zero index} of $A$ at $ \lam.$ 
 Observe that there exists $0 \leq \ell < r $ such that $$  0 = \nu_1(\lam) = \cdots = \nu_\ell(\lam) < \underbrace{\nu_{\ell+1}(\lam) \leq \cdots \leq \nu_r(\lam)}_{\text{ partial multiplicities}} \text{ and }  m_e(\lam) := \underbrace{\nu_1(\lam)+\cdots +\nu_r(\lam)}_{\text{total multiplicity}} $$ and $\nu_r(\lam)$ is the  ascent of $\lam.$ Thus $(z- \lam)^{\nu_{\ell+1}(\lam)}, \ldots, (z- \lam)^{\nu_{r}(\lam)}$ are elementary divisors of $A(z)$ at $ \lam.$ Moreover, $A(z) \sim_{\lam} \left[ (z-\lam)^{\nu_1(\lam)} \oplus \cdots \oplus (z- \lam)^{\nu_r(\lam)} \oplus 0_{m-r, n-r}\right].$
\vone 



\vone

Since  $ A(z) \sim_\Omega S_A(z)$,  there exist $ E \in \mathrm{GL}_m(\mathbb{H}(\Omega))$ and $ F \in \mathrm{GL}_n(\mathbb{H}(\Omega))$ such that $$  A(z) F(z) = E(z)S_A(z) \;\text{  for all } \;  z \in \Omega.$$ Hence the last $n-r$ columns of $F(z)$ form a basis of $N(A)$ and the first $r$ columns of $E(z)$ form a basis of $R(A).$ Indeed, let $$ F(z) = \bmatrix{ v_1(z) & \cdots & v_n(z)}  \text{ and } E(z) = \bmatrix{ u_1(z) & \cdots & u_m(z)}$$ be column  partitions of $F(z)$ and $ E(z)$. Then $ Av_j = 0 $ for $ j=r+1: n$ which shows that $N(A) = \mathrm{span}_{\mathbb{M}(\Omega)}( v_{r+1}, \ldots, v_n)$ and  $\{ v_{r+1}, \ldots, v_n\}$ is an invertible  basis of $N(A).$ Next, note that $ Av_j = \phi_j u_j$ for $ j=1: r$ which  shows that $R(A) = \mathrm{span}_{\mathbb{M}(\Omega)}( u_{1}, \ldots, u_r)$ and $\{u_1, \ldots, u_r\} $ is an  invertible basis of $R(A).$  

\vone

\begin{theorem}[Smith-McMillan form, \cite{rafiams}]\label{smc} 
	Let $ A \in \mathbb{M}(\Omega)^{m\times n}$. Suppose that $\nrank(A) = r.$ Then the following hold: 
	\begin{itemize} 	
		\item[(a)] There exist  functions $ \phi_1, \ldots, \phi_r$ in $\mathbb{H}(\Omega) $ with principal divisors $\nu_1, \ldots, \nu_r $ on $\Omega$ such that $\nu_1 \leq \cdots \leq \nu_r.$  
		
		\item[(b)] There exist  functions $ \psi_1, \ldots, \psi_r$ in $\mathbb{H}(\Omega) $ with principal divisors $\kappa_1, \ldots, \kappa_r $ on $\Omega$ such that $\kappa_1 \geq \cdots \geq \kappa_r.$

		\item[(c)] $\phi_j$ and $\psi_j$ are relatively prime, that is, $\mathrm{gcd}(\phi_j, \psi_j) =1$ for $ j=1:r$ and 
		$$ A(z) \sim_{\Omega} \left[\begin{array}{ccc|c}\phi_1(z)/\psi_1(z) & & & \\ & \ddots& &  \\ & & \phi_r(z)/\psi_r(z) & \\ \hline & & & 0_{m-r, n-r} \end{array}\right] =: \Sigma_A(z),$$ where $\phi_1, \ldots, \phi_r$ and $ \psi_1, \ldots, \psi_r$ are unique up to unit elements of $ \mathbb{H}(\Omega).$  
		
		\item[(d)]  If $ \sig_{\Omega}(A) = \{ \lam_n : n \in \mathbb{N}\} \text{ and } \wp_{\Omega}(A) = \{ \mu_n : n \in  \mathbb{N}\},$ then
		 $$ \phi_j(z) =  \prod^\infty_{\ell=1} (z- \lam_\ell)^{\nu_j(\lam_\ell)}u_{j\ell}(z) \text{ and  }  \psi_j(z) =  \prod^\infty_{\ell=1} (z- \mu_\ell)^{\kappa_j(\mu_\ell)}v_{j\ell}(z) \;\text{ for } j=1:r,$$where $  u_{j\ell}, v_{j\ell} \in \mathbb{H}(\Omega)$ are unit elements,  $\ell \in \mathbb{N}$ and the infinite products (possibly empty) converge normally in $\Omega.$ 
	\end{itemize}
	
\end{theorem}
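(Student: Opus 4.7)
The plan is to derive this theorem from the holomorphic Smith form (Theorem~\ref{hsmith}) by first clearing denominators. Since $\mathbb{M}(\Omega)$ is the quotient field of $\mathbb{H}(\Omega)$, every entry of $A$ can be written as a ratio of holomorphic functions, and pulling a common denominator (using that the union of pole sets of the entries is locally finite in $\Omega$ and invoking the Weierstrass product theorem) yields a scalar $d \in \mathbb{H}(\Omega)$ with principal divisor $\delta$ such that $\widetilde{A} := dA \in \mathbb{H}(\Omega)^{m\times n}$. Clearly $\nrank(\widetilde{A}) = r$, so Theorem~\ref{hsmith} gives
$$
\widetilde{A}(z) \sim_\Omega \diag(\widetilde{\phi}_1(z), \ldots, \widetilde{\phi}_r(z)) \oplus 0_{m-r,n-r},
$$
with $\widetilde{\phi}_j \in \mathbb{H}(\Omega)$ having principal divisor $\widetilde{\nu}_j$ on $\Omega$, $\widetilde{\nu}_1 \leq \cdots \leq \widetilde{\nu}_r$, and each $\widetilde{\phi}_j \mid \widetilde{\phi}_{j+1}$.

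Next I would divide through by $d$, obtaining $A(z) \sim_\Omega \diag(\widetilde{\phi}_1/d,\ldots,\widetilde{\phi}_r/d) \oplus 0$, and reduce each entry to lowest terms. Pointwise on $\Omega$, define
$$
\nu_j(z) := \max(\widetilde{\nu}_j(z) - \delta(z),\, 0), \qquad \kappa_j(z) := \max(\delta(z) - \widetilde{\nu}_j(z),\, 0),
$$
and let $\phi_j, \psi_j \in \mathbb{H}(\Omega)$ be holomorphic functions with these principal divisors (existence guaranteed by the Weierstrass product theorem, as in Theorem~\ref{hsmith}). Then $\gcd(\phi_j,\psi_j)=1$ by construction, and $\widetilde{\phi}_j/d$ equals $\phi_j/\psi_j$ up to a unit in $\mathbb{H}(\Omega)$; absorbing the unit into the equivalence transformations on the left gives
$$
A(z) \sim_\Omega \diag(\phi_1/\psi_1,\ldots,\phi_r/\psi_r) \oplus 0_{m-r,n-r} =: \Sigma_A(z).
$$
The required orderings then follow from pointwise monotonicity of $t \mapsto \max(t-\delta(z),0)$ and $t \mapsto \max(\delta(z)-t,0)$: since $\widetilde{\nu}_j(z) \leq \widetilde{\nu}_{j+1}(z)$, we obtain $\nu_j \leq \nu_{j+1}$ and $\kappa_j \geq \kappa_{j+1}$, giving (a) and (b). The coprimality assertion is part (c).

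For uniqueness in (c), suppose $\Sigma_A \sim_\Omega \Sigma'_A$ for another such form with entries $\phi'_j/\psi'_j$. Multiplying both diagonal forms by a common holomorphic scalar $d^*$ that clears all the $\psi_j, \psi'_j$ produces two holomorphic diagonal matrices equivalent on $\Omega$ with nondecreasing divisibility chains. Uniqueness in Theorem~\ref{hsmith} forces the entries to agree up to units, and the coprimality condition $\gcd(\phi_j,\psi_j) = \gcd(\phi'_j,\psi'_j)=1$ then forces $\phi_j$ and $\phi'_j$ (resp.\ $\psi_j$ and $\psi'_j$) to agree up to units of $\mathbb{H}(\Omega)$. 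Finally, part (d) follows by applying the product representation from Theorem~\ref{hsmith} to each of $\phi_j$ and $\psi_j$ separately, over the locally finite zero set $\sigma_\Omega(A)$ and pole set $\wp_\Omega(A)$ respectively.

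The main obstacle will be the simultaneous denominator clearing: one must justify the existence of a single $d \in \mathbb{H}(\Omega)$ absorbing every pole of $A$ across all entries with the correct multiplicities, which hinges on the locally finite pole structure and the Weierstrass product theorem. A secondary subtlety is ensuring that the divisibility and coprimality conditions survive the passage to lowest terms and the uniqueness step; this is handled cleanly via the pointwise $\min$/$\max$ formulas on the principal divisors.
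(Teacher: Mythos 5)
The paper never proves Theorem~\ref{smc}; it is imported from \cite{rafiams}, so there is no in-text proof to compare against. Your argument --- clear denominators with a single Weierstrass function $d$ whose divisor is the pointwise maximum of the entrywise pole orders, apply the holomorphic Smith form of Theorem~\ref{hsmith} to $dA$, divide back by $d$, and reduce each diagonal entry to lowest terms via $\nu_j=\max(\widetilde{\nu}_j-\delta,0)$, $\kappa_j=\max(\delta-\widetilde{\nu}_j,0)$ --- is the standard derivation (it is exactly how the Smith--McMillan form of a rational matrix is obtained from the Smith form, transplanted to $\mathbb{H}(\Omega)$ using the divisor calculus of Remark~\ref{rem:rank}ff.), and it is correct. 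The monotonicity of the $\nu_j$ and $\kappa_j$, the coprimality $\min(\nu_j,\kappa_j)=0$, and the absorption of the leftover units into the left equivalence factor all go through as you say.

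Two points deserve an explicit sentence in a written-up version. First, in the uniqueness step you invoke the uniqueness clause of Theorem~\ref{hsmith} for the cleared matrices $d^*\Sigma_A$ and $d^*\Sigma_A'$; for that clause to apply, their diagonal entries must again form a divisibility chain. This does hold --- $\nu_j-\kappa_j$ is nondecreasing in $j$ because $\nu_j$ is nondecreasing and $\kappa_j$ is nonincreasing --- and then equality of the differences $\nu_j-\kappa_j=\nu_j'-\kappa_j'$ together with $\min(\nu_j,\kappa_j)=\min(\nu_j',\kappa_j')=0$ recovers $\nu_j=\nu_j'$ and $\kappa_j=\kappa_j'$ separately; say this. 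Second, part~(d) tacitly identifies the zero set of $\prod_j\phi_j$ with $\sig_{\Omega}(A)$ and the zero set of $\prod_j\psi_j$ with $\wp_{\Omega}(A)$, where $\sig_{\Omega}(A)$ is defined for a meromorphic matrix through eigenvalues and eigenpoles (Definition~2.5). That identification is a separate (cited) fact about the Smith--McMillan form, not an automatic consequence of your construction, and should at least be flagged before the Weierstrass product representation of $\phi_j$ and $\psi_j$ is written over those index sets.
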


\vone 
The diagonal matrix $\Sigma_A(z)$ is the {\em Smith-McMillan form} of $A(z)$ on $\Omega.$ The functions $\phi_1, \ldots, \phi_r$ are called the {\em  invariant zero functions}  of $A(z)$  and  $\psi_1, \ldots, \psi_r$ are called the {\em invariant pole functions}  of $A(z)$ on $\Omega$.   Define $ \phi_A(z) := \prod^r_{j=1} \phi_j(z)$ and $ \psi_A(z) := \prod^r_{j=1}\psi_j(z).$ Then it follows that  $$  \wp_{\Omega}(A) =\{ \mu \in \Omega : \psi_A(\mu) = 0\} = \sig_{\Omega}(\psi_A) \text{ and } \sig_{\Omega}(A) = \{ \lam \in \Omega : \phi_A(\lam) = 0\} = \sig_{\Omega}(\phi_A).$$  In fact, we have~(see, \cite{jali, rafiams}) $\sigma_{\Omega}(A )=  \sig_{\Omega}(\phi_A) = \eig_{\Omega}(A) \cup \eip_{\Omega}(A)$ and 	\begin{align*}
	\eig_{\Omega}(A) & = \{ \lambda \in \Omega \ : \ \phi_A(\lambda)=0 \ \emph{and} \ \psi_{A}(\lambda) \neq 0 \}, \\  \eip_{\Omega}(A)&= \{ \lambda \in \Omega  : \ \phi_A(\lambda)=0 \ \emph{and} \ \psi_{A}(\lambda) = 0 \}.
\end{align*}

The tuple $(\nu_1, \ldots, \nu_r)$ is the {\em structural invariant of zeros} of $A(z)$ and the tuple  $ (\kappa_1, \ldots, \kappa_r)$ is the {\em structural invariant of poles} of $A(z)$ which yield the structural indices of zeros and poles of $A(z).$   

\von
{\bf  Zero index:} The tuple $ \mathrm{Ind}_e(\lam, A) := ( \nu_1(\lam), \ldots, \nu_r(\lam)) \in \mathbb{Z}_+^r$ for $ \lam \in \sig_{\Omega}(A)$ is the {\em zero index} of $A$ at $\lam.$  Then there exists $0 \leq \ell < r $ such that $$  0 = \nu_1(\lam) = \cdots = \nu_\ell(\lam) < \underbrace{\nu_{\ell+1}(\lam) \leq \cdots \leq \nu_r(\lam)}_{\text{ partial multiplicities}} \text{ and }  m_e(\lam) := \underbrace{\nu_1(\lam)+\cdots +\nu_r(\lam)}_{\text{total multiplicity}}$$ and $\nu_r(\lam)$ is the ascent of $\lam.$ 
\vone

{\bf Pole index:} The tuple $ \mathrm{Ind}_p(\lam, A) := ( \kappa_1(\lam), \ldots, \kappa_r(\lam)) \in \mathbb{Z}_+^r$ for $ \lam \in \wp_{\Omega}(A)$ is the {\em pole index} of $A$ at $\lam.$  Observe that there exists $0 \leq \ell \leq r $ such that $$ \underbrace{ \kappa_1(\lam) \geq  \cdots \geq \kappa_\ell(\lam)}_{\text{ partial multiplicities}}  > \kappa_{\ell+1}(\lam) = \cdots = \kappa_r(\lam) =0\text{ and }  m_p(\lam) := \underbrace{\kappa_1(\lam)+\cdots +\kappa_r(\lam)}_{\text{total multiplicity }}$$ and $\kappa_1(\lam)$ is the order of the pole $\lam.$  \vone 

{\bf Pole-Zero index:} $ \mathrm{Ind}_{ep}(\lam, A) := ( \tau_1(\lam), \ldots, \tau_r(\lam)) \in \mathbb{Z}^r$ for $ \lam \in \sig_{\Omega}(A)\cup \wp_{\Omega}(A)$ is the {\em pole-zero index} of $A$ at $ \lam,$ where  $ \tau_j(\lam) := \nu_j(\lam) - \kappa_j(\lam)$ for $ j=1:r.$  If $\lam \in \eip_{\Omega}(A) $ then there exist positive integers $ 1\leq  \ell <  s \leq r$ such that $$ \underbrace{\tau_1(\lam) \leq \cdots \leq \tau_\ell(\lam)}_{\text{pole partial multiplicities}} <  0 =\cdots = 0 <  \underbrace{ \tau_s(\lam) \leq \cdots \leq \tau_r(\lam)}_{\text{zero partial multiplicities}}. $$

For $\lam \in \C$, we have $ A(z) \sim_\lam \left[ (z-\lam)^{\tau_1(\lam)}\oplus \cdots\oplus (z-\lam)^{\tau_r(\lam)}\oplus 0_{m-r, n-r}\right]$ which is the local Smith form of $A$. See~\cite[p.414]{kazlov} and \cite{gks} for local Smith form of a holomorphic matrix.

\vone

\section{Matrix-fraction description (MFD)} MFDs of rational matrices are powerful tools for analyzing PMDs. In this section, we develop analogous theory for MFDs of meromorphic matrices.

Let $ A \in \mathbb{H}(\Omega)^{m\times n}$ and $ D \in \mathbb{H}(\Omega)^{n\times n}.$ Then $ D$ is said to be a {\em  right divisor} of $A$ if there exists $ Q \in \mathbb{H}(\Omega)^{m\times n}$ such that $ A = QD.$ Let $  B \in \mathbb{H}(\Omega)^{p\times n}.$ If $D$ is also a right divisor of $B \in \mathbb{H}(\Omega)^{p\times n}$ then $ D$ is said to be a { \em common right divisor} of $A$ and $B.$ Further, $D$ is said to be a { \em greatest common divisor (gcrd)} of $A$ and $B$ if $N\in \mathbb{H}(\Omega)^{n\times n} $ is a common right divisor of $A$ and $B$ then $N$ is a right divisor of $D.$ Furthermore, $A$ and $B$ are said to be { \em right coprime} if every  gcrd of $A$ and $B$ are invertible, that is, if $D$ is a gcrd of $A$ and $ B$ then $ D \in \mathrm{GL}_n(\mathbb{H}(\Omega)).$  Left divisors, common left divisors, greatest common left divisors, and left coprime matrices are defined similarly.

\vone  We mention that  a common right divisor of $A$ and $B$ can be derived from the Smith form. Indeed, let  $ E(z) A(z) F(z) =   S_A(z)$ be the Smith form of $A(z).$  Then setting $ D(z) := F(z)^{-1}$ and  $Q(z) := E(z)^{-1}S_A(z)$,  we have  $ A(z) =  Q(z)D(z)$ which  shows that $D$ is a right divisor of $A.$ Next, let $S(z)$ be the Smith form of $ \mathcal{W} := \bmatrix{ A(z) \\ B(z)}.$ Then $ \mathcal{W} = E(z) S(z) F(z),$ for some  $E \in \mathrm{GL}_{m+p}(\mathbb{H}(\Omega))$ and $F \in \mathrm{GL}_{n}(\mathbb{H}(\Omega)).$   Let $Q_1(z) $ (resp.,  $Q_2(z)$)  denote the first $m$ (resp., last $p$) rows of $ E(z)S(z).$ Set $D(z) := F(z)$ Then we have $$ A(z) = Q_1(z) D  \; \text{ and } \; B(z) = Q_2(z)D(z)$$ which show that $D(z)$ is a common right divisor of $A(z)$ and $B(z).$  A gcrd of two holomorphic matrices can also be extracted from the Smith form.


\vone 

Let  $S_A(z)$ be the Smith form of $A.$ Then  $ A(z) = E(z) S_A(z) F(z)$ for some $ E \in \mathrm{GL}_m(\mathbb{H}(\Omega)) $ and $ E \in \mathrm{GL}_n(\mathbb{H}(\Omega))$. We refer to the decomposition $ A(z) = E(z) S_A(z) F(z)$ as the Smith decomposition of $A(z).$ Now if $ \nrank(A) = r$ then $$ S_A(z) = \bmatrix{D(z) & 0  \\ 0 & 0} = \bmatrix{ I_r \\ 0 } \bmatrix{ D(z) & 0 },$$ where $ D(z) := \diag(\phi_1(z), \ldots, \phi_r(z))$ and $\phi_1, \ldots, \phi_r$ are invariant functions of $A.$  Consider the conformal row and column partitions $  E = \bmatrix{ E_1 & E_2}$ and $ F= \bmatrix{ F_1 \\ F_2}.$ Then we have $ A(z) = E_1(z) D(z) F_1(z), $ where $ E_1$ is left invertible and $F_1$ is right invertible, that is, there exist $ E_L \in \mathbb{H}(\Omega)^{r\times m}$ and $F_R \in \mathbb{H}(\Omega)^{r\times n}$ such that $ E_L(z) E_1(z) = I_r$ and $ F_1(z) F_R(z) = I_r$ for all $ z \in \Omega.$ Define $ D_R(z) := D(z) F_1(z). $ Then we have a (non-unique) decomposition \be\label{rd}  A(z) = E(z) \bmatrix{ D_R(z) \\ 0 } = E_1(z) D_R(z),\ee where $E_1(z)$ is left invertible. 
We refer to $D_R(z)$ as a {\bf \em right-structure matrix} of $A(z).$ Observe that the Smith forms of  $A(z)$ and $D_R(z)$ have the same non-unit invariant functions. Consequently, we have $ \sig_{\Omega}(A) = \sig_{\Omega}(D_R)$ and $ \mathrm{Ind}_e(\lam, A) = \mathrm{Ind}_e(\lam, D_R)$ for all $ \lam \in \sig_{\Omega}(A).$ Thus $A(z)$ and $D_R(z)$ have the same spectral structure, that is, $A(z)$ and $D_R(z)$ have the same  eigenvalues including their partial multiplicities.  This proves the following result.

\vone 

\begin{proposition} \label{rstruc} Let  $ A \in \mathbb{H}(\Omega)^{m\times n}$ be such that $ \nrank(A) =r.$  Then $A$ can be decomposed as $ A(z) = E(z) D_R(z)$, where $ E \in \mathbb{H}(\Omega)^{m\times r}$ is left invertible and $D_R \in  \mathbb{H}(\Omega)^{r\times n}$ is a right-structure matrix of $A(z).$ Further, the Smith forms of $A(z)$ and $D_R(z)$ have the same non-unit invariant functions. In particular, 
  $ \sig_{\Omega}(A) = \sig_{\Omega}(D_R)$ and $ \mathrm{Ind}_e(\lam, A) = \mathrm{Ind}_e(\lam, D_R) \; \text{  for all } \; \lam \in \sig_{\Omega}(A).$
\end{proposition}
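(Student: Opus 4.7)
The plan is to extract $D_R$ directly from the Smith decomposition of $A$ furnished by Theorem~\ref{hsmith}. That theorem provides units $E \in \mathrm{GL}_m(\mathbb{H}(\Omega))$ and $F \in \mathrm{GL}_n(\mathbb{H}(\Omega))$ with $A(z) = E(z) S_A(z) F(z)$ and $S_A = \diag(\phi_1,\ldots,\phi_r) \oplus 0_{m-r,n-r}$. I would factor $S_A = \bmatrix{I_r \\ 0}\bmatrix{D & 0}$ where $D := \diag(\phi_1,\ldots,\phi_r)$, then conformally partition $E = \bmatrix{E_1 & E_2}$ with $E_1 \in \mathbb{H}(\Omega)^{m\times r}$, and $F = \bmatrix{F_1 \\ F_2}$ with $F_1 \in \mathbb{H}(\Omega)^{r\times n}$. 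A routine block computation then gives $A = E_1 D F_1$, and defining $D_R := D F_1 \in \mathbb{H}(\Omega)^{r\times n}$ yields the required factorization $A = E_1 D_R$.

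Left invertibility of $E_1$ falls out of the partition: writing $E^{-1} = \bmatrix{G_1 \\ G_2}$ conformally with $G_1 \in \mathbb{H}(\Omega)^{r\times m}$, the identity $E^{-1}E = I_m$ forces $G_1 E_1 = I_r$, so $G_1$ is a holomorphic left inverse. (An analogous argument produces a right inverse of $F_1$, though it is not needed for the statement.)

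For the invariant-function equality, my plan is to bypass any direct manipulation of $D F_1$ by observing that, directly from $A = E S_A F$, one has $E^{-1} A = S_A F = \bmatrix{D F_1 \\ 0} = \bmatrix{D_R \\ 0_{m-r,n}}$. Since $E^{-1}$ is a unit, this yields $\bmatrix{D_R \\ 0_{m-r,n}} \sim_\Omega A$ (with right factor $I_n$), so by uniqueness in Theorem~\ref{hsmith} these two matrices share the same Smith form on $\Omega$, and in particular the same non-unit invariant functions $\phi_1,\ldots,\phi_r$. Since padding a matrix with zero rows manifestly does not alter its non-unit invariant factors (block-diagonalize any Smith decomposition of $D_R$), the matrix $D_R$ itself has $\phi_1,\ldots,\phi_r$ as its non-unit invariant functions, matching those of $A$. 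The assertions $\sig_\Omega(A) = \sig_\Omega(D_R)$ and $\mathrm{Ind}_e(\lambda,A) = \mathrm{Ind}_e(\lambda,D_R)$ then read off immediately from the definitions of the zero function and zero index, since both depend only on the non-unit invariant functions.

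I do not expect a genuine obstacle: the only care required is block bookkeeping to confirm that $E_1 D F_1$ actually reconstructs $A$ and that $E^{-1} A$ has the stated block form. Once those identifications are made, the proposition is a direct corollary of Theorem~\ref{hsmith} and the uniqueness of Smith invariants under holomorphic equivalence.
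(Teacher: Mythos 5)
Your proposal is correct and follows essentially the same route as the paper, which derives the factorization $A = E_1 D_R$ with $D_R := D F_1$ from the conformally partitioned Smith decomposition in the discussion immediately preceding the proposition. The only difference is that you explicitly justify the step the paper merely states as an observation (that $A$ and $D_R$ share the same non-unit invariant functions), via the identity $E^{-1}A = \bigl[\begin{smallmatrix} D_R \\ 0 \end{smallmatrix}\bigr]$ and the uniqueness of Smith invariants; this is a sound and welcome addition.
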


\vone The next result extracts a gcrd of two holomorphic matrices. \vone 

\begin{theorem}\label{gcrd}  Let $ A \in \mathbb{H}(\Omega)^{m\times n}$ and $  B \in \mathbb{H}(\Omega)^{p\times n}$. Set $\mathcal{W}(z) := \bmatrix{ A(z) \\ B(z)}.$  Suppose that $\nrank(\mathcal{W}) = n.$ 	
	 Let $ D_R \in \mathbb{H}(\Omega)^{n\times n}$ be a right-structure matrix of $\mathcal{W}(z)$. Then $ D_R(z)$ is a gcrd of $A(z)$ and $B(z).$ 
\end{theorem}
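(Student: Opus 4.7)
The plan is to prove both parts of the gcrd property: first that $D_R$ is a common right divisor of $A$ and $B$, and second that $D_R$ is greatest among common right divisors. Both parts will be obtained by directly exploiting the left-invertibility enjoyed by the ``$E$'' factor in the decomposition from Proposition~\ref{rstruc}.

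First I would apply Proposition~\ref{rstruc} to $\mathcal{W}(z)$. Since $\nrank(\mathcal{W}) = n$, the right-structure matrix $D_R$ is square of size $n\times n$, and we obtain a factorization $\mathcal{W}(z) = E(z) D_R(z)$ with $E \in \mathbb{H}(\Omega)^{(m+p)\times n}$ left invertible. Splitting $E$ conformally as $E = \begin{bmatrix} E_1 \\ E_2 \end{bmatrix}$ with $E_1 \in \mathbb{H}(\Omega)^{m\times n}$ and $E_2 \in \mathbb{H}(\Omega)^{p\times n}$, the top and bottom block identifications give $A = E_1 D_R$ and $B = E_2 D_R$, which immediately shows that $D_R$ is a common right divisor of $A$ and $B$. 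Note also that $D_R$ is regular on $\Omega$ because $\mathcal{W}$ and $D_R$ share the same non-unit invariant factors and $\nrank(\mathcal{W}) = n$.

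Next, I would establish the ``greatest'' property. Suppose $N \in \mathbb{H}(\Omega)^{n\times n}$ is any common right divisor of $A$ and $B$, so that $A = Q_1 N$ and $B = Q_2 N$ for some $Q_1 \in \mathbb{H}(\Omega)^{m\times n}$ and $Q_2 \in \mathbb{H}(\Omega)^{p\times n}$. Stacking these gives
\[
\mathcal{W}(z) \;=\; \begin{bmatrix} Q_1(z) \\ Q_2(z) \end{bmatrix} N(z).
\]
Since $E$ is left invertible, pick $E_L \in \mathbb{H}(\Omega)^{n\times(m+p)}$ with $E_L E = I_n$. Then
\[
D_R(z) \;=\; E_L(z)\,\mathcal{W}(z) \;=\; E_L(z)\begin{bmatrix} Q_1(z) \\ Q_2(z) \end{bmatrix} N(z),
\]
which exhibits $N$ as a right divisor of $D_R$, as required.

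The main obstacle I anticipate is not really deep: it is simply the verification that the ``$E$'' appearing in Proposition~\ref{rstruc} produces a left inverse in $\mathbb{H}(\Omega)^{n\times(m+p)}$ globally on $\Omega$, rather than only locally. This is guaranteed by the statement of Proposition~\ref{rstruc} itself (left invertibility is part of its conclusion), so once that is invoked, the proof reduces to two lines of block-matrix identification plus one application of a holomorphic left inverse.
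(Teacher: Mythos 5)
Your proposal is correct and follows essentially the same route as the paper: both obtain $A=E_1D_R$, $B=E_2D_R$ from the factorization $\mathcal{W}=ED_R$ guaranteed by Proposition~\ref{rstruc}, and both prove maximality from the Bezout-type identity $D_R=E_L\mathcal{W}$ (the paper writes this as $X_1A+X_2B=D_R$ using the first $n$ rows of the inverse of the square unit factor $E\in\mathrm{GL}_{m+p}(\mathbb{H}(\Omega))$, which is exactly a left inverse of the tall factor you use). The two arguments differ only in this cosmetic choice of whether to work with the square invertible $E$ or its left-invertible first block column.
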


\von 
\begin{proof} Note that there is a matrix $ E \in \mathrm{GL}_{m+p} (\mathbb{H}(\Omega))$ such that $ \mathcal{W}(z) = E(z) \bmatrix{ D_R(z) \\ 0}.$ Indeed, consider the Smith decomposition  $ \mathcal{W}(z) = E(z) \bmatrix{ D(z) \\ 0 } F(z)  $, where $ D \in \mathbb{H}(\Omega)^{n\times n}$ is given by $ D(z) := \diag(\phi_1(z), \ldots, \phi_n(z)). $   Then setting $ D_R(z) := D(z) F(z),$ we have the desired result.

Now consider the conformal partition $ E = \bmatrix{ E_1 & G_1\\ E_2 & G_2}$, where $ E_1 \in \mathbb{H}(\Omega)^{m\times n}$ and $ E_2 \in \mathbb{H}(\Omega)^{p\times n}.$ 	Then $ \bmatrix{ E_1\\ E_2}$ is left invertible and $ \mathcal{W}(z) = \bmatrix{ E_1(z) \\ E_2(z)} D_R(z)$ which shows that $ A(z) = E_1(z) D_R(z)$ and $ B(z) = E_2(z) D_R(z).$ Hence $ D_R(z)$ is a common right divisor of $A(z)$ and $B(z).$

Next, consider the conformal partition $ E^{-1} = \bmatrix{  X_1 & X_2 \\ Y_1 & Y_2}$. Then $E^{-1}\mathcal{W} = \bmatrix{D_R \\ 0 }$ yields $ X_1(z) A(z) + X_2(z) B(z) = D_R(z).$ If $ N(z)$ is a common right divisor of $A(z)$ and $B(z)$ then $ A(z) = Q_1(z) N(z)$ and $ B(z) = Q_2(z) N(z).$  Consequently, we have $ D_R(z) = X_1(z) A(z) + X_2(z) B(z) = (X_1(z)Q_1(z) + X_2(z) Q_2(z)) N(z)$ showing that $ N(z)$ is a right divisor of $D_R(z).$ This proves that $D_R(z)$ is a gcrd of $A(z)$ and $B(z).$ 	
\end{proof}

\vone 
Recall that $A(z)$ and $B(z)$ are said to be right coprime if every gcrd of $A(z)$ and $B(z)$ is invertible. The next result characterizes right coprime matrices. \von

\begin{proposition}\label{coprime} Let $ A \in \mathbb{H}(\Omega)^{m\times n}$ and $ B \in \mathbb{H}(\Omega)^{p\times n}$. Then 
	$A(z)$ and $B(z)$ are right coprime $\Longleftrightarrow \rank \bmatrix{A(z)\\ B(z) } = n$ for all $z \in \Omega.$  
\end{proposition}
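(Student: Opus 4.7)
The plan is to prove both directions, using Theorem~\ref{gcrd} and Proposition~\ref{rstruc} as the main tools, with a preliminary reduction that handles the case when the stacked matrix is column rank-deficient as a meromorphic matrix.

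\textbf{Plan for $(\Longleftarrow)$.} Assume $\rank \bmatrix{A(z)\\ B(z)} = n$ for all $z \in \Omega$. Then immediately $\nrank(\mathcal{W})=n$ and $\sig_\Omega(\mathcal{W}) = \emptyset$, so Theorem~\ref{gcrd} applies and gives a gcrd $D_R \in \mathbb{H}(\Omega)^{n\times n}$. By Proposition~\ref{rstruc}, $\sig_\Omega(D_R)=\sig_\Omega(\mathcal{W})=\emptyset$, which (as noted after Definition~\ref{heqv}) forces $D_R \in \mathrm{GL}_n(\mathbb{H}(\Omega))$. To conclude that \emph{every} gcrd is invertible, I would observe that if $D'$ is any other gcrd, then by the defining property of a gcrd both $D'\mid D_R$ and $D_R\mid D'$, so $D_R = K_1 D'$ and $D' = K_2 D_R$ for some holomorphic $K_1, K_2$; substituting gives $(I - K_1K_2)D_R=0$, and invertibility of $D_R$ forces $K_1K_2=I$, hence $K_2$ is invertible and so is $D' = K_2 D_R$. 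Thus $A,B$ are right coprime.

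\textbf{Plan for $(\Longrightarrow)$ via contrapositive.} Suppose there exists $\lambda \in \Omega$ with $\rank(\mathcal{W}(\lambda)) < n$; I will construct a non-invertible gcrd. Split into two cases according to $\nrank(\mathcal{W})$. If $\nrank(\mathcal{W})=n$, then $\lambda \in \sig_\Omega(\mathcal{W})$; Theorem~\ref{gcrd} gives a gcrd $D_R$, and Proposition~\ref{rstruc} gives $\sig_\Omega(D_R)=\sig_\Omega(\mathcal{W})\ni\lambda$, so $\det D_R(\lambda)=0$ and $D_R$ is not invertible. If instead $\nrank(\mathcal{W})=r<n$, take a Smith decomposition $\mathcal{W} = E\bigl[\begin{smallmatrix} S\\ 0\end{smallmatrix}\bigr]F$ with $E\in\mathrm{GL}_{m+p}(\mathbb{H}(\Omega))$, $F\in\mathrm{GL}_n(\mathbb{H}(\Omega))$ and $S=\diag(\phi_1,\ldots,\phi_r,0,\ldots,0)\in\mathbb{H}(\Omega)^{n\times n}$. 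Set $D:=SF$ and partition $E=\bigl[\begin{smallmatrix} E_{11} & E_{12}\\ E_{21} & E_{22}\end{smallmatrix}\bigr]$ conformally with $(m|p)\times(n|m+p-n)$; then $A=E_{11}D$ and $B=E_{21}D$, so $D$ is a common right divisor. To see $D$ is a gcrd, take any common right divisor $N$, write $A=P_1 N$, $B=P_2 N$ and compute $E^{-1}\mathcal{W} = \bigl[\begin{smallmatrix} SF\\ 0\end{smallmatrix}\bigr] = E^{-1}\bigl[\begin{smallmatrix} P_1\\ P_2\end{smallmatrix}\bigr] N$; the top block reads $D = XN$ for a suitable holomorphic $X$, so $N\mid D$. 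Since $\det D = \det S \cdot \det F \equiv 0$, this $D$ is a non-invertible gcrd. In both cases coprimeness fails.

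\textbf{Main obstacle.} The only subtle step is verifying in the rank-deficient case ($\nrank(\mathcal{W})<n$) that the constructed $D=SF$ is not merely a common right divisor but actually a gcrd; the argument relies on the fact that the Smith decomposition absorbs all common column structure into the lower zero block, which is exactly what makes $N\mid D$ work for every common right divisor $N$. Once this is in place, everything else is an application of the already-established Smith-form machinery.
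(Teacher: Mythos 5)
Your proposal is correct, and the two directions split as follows when compared with the paper. The $(\Longleftarrow)$ direction is essentially the paper's own argument (right-structure matrix from Theorem~\ref{gcrd}, then $\sig_{\Omega}(D_R)=\sig_{\Omega}(\mathcal{W})=\emptyset$ via Proposition~\ref{rstruc}); you add the explicit step that any two gcrds differ by a unit left factor, which the paper leaves implicit but which is genuinely needed since coprimeness is defined via \emph{every} gcrd. The $(\Longrightarrow)$ direction is where you diverge: the paper argues directly, taking a gcrd $D\in\mathrm{GL}_n(\mathbb{H}(\Omega))$ and asserting a factorization $\bmatrix{A\\ B}=\bmatrix{Q_1\\ Q_2}D$ with $\bmatrix{Q_1\\ Q_2}$ left invertible — a factorization that comes from the right-structure-matrix construction and tacitly presupposes $\nrank(\mathcal{W})=n$. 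Your contrapositive with the case split on $\nrank(\mathcal{W})$ is more careful: it handles the degenerate case $\nrank(\mathcal{W})<n$ head-on by exhibiting an identically singular gcrd $D=SF$, and your verification that $N\mid SF$ for every common right divisor $N$ (reading off the top block of $E^{-1}\mathcal{W}$) is sound and mirrors the divisibility argument inside the paper's proof of Theorem~\ref{gcrd}. One small caveat: your conformal partition of $E$ as $(m\,|\,p)\times(n\,|\,m+p-n)$ implicitly assumes $m+p\ge n$; if $m+p<n$ the Smith form is wide rather than tall, and one should instead write $S_{\mathcal{W}}=\bmatrix{I_{m+p} & 0}\,S$ with $S\in\mathbb{H}(\Omega)^{n\times n}$ diagonal, after which the identical argument gives $\mathcal{W}=\bigl(E\bmatrix{I_{m+p} & 0}\bigr)SF$ and $N\mid SF$ as before. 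This is a cosmetic fix, not a gap.
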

\von 

 \begin{proof} Suppose that $ A(z)$ and $B(z)$ are right coprime. Let $ D$ be a gcrd of $A(z)$ and $B(z)$. Then $D \in \mathrm{GL}_n(\mathbb{H}(\Omega)).$  Further, there exist $Q_1 \in \mathbb{H}(\Omega)^{m\times n}$ and $Q_2 \in \mathbb{H}(\Omega)^{p\times n}$ such that $\bmatrix{Q_1\\ Q_2}$ is left invertible and $\bmatrix{A\\ B} = \bmatrix{Q_1\\ Q_2} D.$ Now $ \rank\bmatrix{ A(z) \\ B(z) } = \rank\bmatrix{Q_1(z) \\ Q_2(z)} = n$ for all $ z \in \Omega.$

Conversely, suppose that $\rank \bmatrix{ A(z) \\ B(z) } = n$ for all $ z \in \Omega.$  Set $ \mathcal{W} := \bmatrix{ A\\ B}.$ Then $\rank \,\mathcal{W}(z) = n$ for all $ z \in \Omega \Longrightarrow \nrank (\mathcal{W}) = n.$  Let $D$ be a right-structure matrix of $\mathcal{W}(z).$ Then by Theorem~\ref{gcrd}, $D(z)$ is a gcrd of $ A(z)$ and $B(z).$  By Proposition~\ref{rstruc}, we have $  \sig_{\Omega}(D) = \sig_{\Omega}(\mathcal{W}) = \emptyset$ which shows that $ D$ is invertible, that is, $ D \in \mathrm{GL}_n(\mathbb{H}(\Omega)).$ \end{proof} 

\von

Further, we have following result which will play an important role in the subsequent development.

\begin{theorem}\label{cprime2} Let $ A \in \mathbb{H}(\Omega)^{m\times n}$ and $ B \in \mathbb{H}(\Omega)^{p\times n}.$ Set $ \mathcal{W} := \bmatrix{ A\\ B}.$ Then the following statements are equivalent. 
	\begin{itemize}
		\item[(a)] $A(z)$ and $B(z)$ are right coprime \item[(b)] $\sig_{\Omega}(\mathcal{W}) = \emptyset. $ \item[(c)] There exists $ E \in \mathrm{GL}_{m+p}(\mathbb{H}(\Omega))$ such that $ E(z) \mathcal{W}(z) = \bmatrix{ I_n\\ 0} = S_{\mathcal{W}}(z),$ where $S_{\mathcal{W}}(z)$ is the Smith form of $\mathcal{W}(z).$ 
		
	\item[(d)] There exist $ X \in \mathbb{H}(\Omega)^{n\times m}$ and $ Y \in \mathbb{H}(\Omega)^{n\times p}$ such that $ X(z) A(z) + Y(z) B(z) = I_n$ for all $ z \in \Omega.$  
	\item[(e)] There exist holomorphic matrices $C(z)$ and $ D(z)$ such that $ \bmatrix{ A & C \\ B & D} \in \mathrm{GL}_{m+p}(\mathbb{H}(\Omega)).$ 
	\end{itemize}
\end{theorem}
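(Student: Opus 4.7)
The plan is to prove this by establishing the cyclic chain $(a)\Leftrightarrow(b)\Rightarrow(c)\Rightarrow(d)\Rightarrow(b)$ together with $(c)\Rightarrow(e)\Rightarrow(b)$, drawing on the Smith form machinery already set up.

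First, $(a)\Leftrightarrow(b)$ is essentially Proposition~\ref{coprime}: right coprimeness is equivalent to $\rank \mathcal{W}(z)=n$ for all $z\in\Omega$, which says that $\nrank(\mathcal{W})=n$ and $\mathcal{W}$ has no rank drops, i.e.\ $\sig_\Omega(\mathcal{W})=\emptyset$. For $(b)\Rightarrow(c)$, I would use the Smith form of $\mathcal{W}$: since $\sig_\Omega(\mathcal{W})=\emptyset$, every invariant function of $\mathcal{W}$ is a unit, so the Smith form reduces to $S_{\mathcal{W}}=\bmatrix{I_n\\ 0}$ and the Smith decomposition gives $\mathcal{W}(z)=E_0(z)\bmatrix{F_0(z)\\ 0}$ with $E_0\in\mathrm{GL}_{m+p}(\mathbb{H}(\Omega))$ and $F_0\in\mathrm{GL}_n(\mathbb{H}(\Omega))$. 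Setting $E(z):=\bmatrix{F_0(z)^{-1} & 0\\ 0 & I_{m+p-n}}E_0(z)^{-1}$ then yields $E(z)\mathcal{W}(z)=\bmatrix{I_n\\ 0}$, which is (c).

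For $(c)\Rightarrow(d)$, I would partition the unit $E$ conformally as $E=\bmatrix{X & Y\\ * & *}$ with $X\in\mathbb{H}(\Omega)^{n\times m}$ and $Y\in\mathbb{H}(\Omega)^{n\times p}$; reading off the top block of $E\mathcal{W}=\bmatrix{I_n\\ 0}$ gives $X(z)A(z)+Y(z)B(z)=I_n$. For $(c)\Rightarrow(e)$, instead partition $E^{-1}=\bmatrix{P_1 & C\\ P_3 & D}$; since $\mathcal{W}=E^{-1}\bmatrix{I_n\\ 0}$, necessarily $P_1=A$ and $P_3=B$, so $\bmatrix{A & C\\ B & D}=E^{-1}\in\mathrm{GL}_{m+p}(\mathbb{H}(\Omega))$.

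For the closings $(d)\Rightarrow(b)$ and $(e)\Rightarrow(b)$ I would evaluate pointwise: from $XA+YB=I_n$, for any $\lam\in\Omega$ the matrix $\bmatrix{X(\lam)&Y(\lam)}$ is a left inverse of $\mathcal{W}(\lam)$, forcing $\rank\mathcal{W}(\lam)=n$ and hence $\sig_\Omega(\mathcal{W})=\emptyset$; similarly, a unit completion $\bmatrix{A&C\\ B&D}$ has pointwise rank $m+p$, so its first $n$ columns $\mathcal{W}(\lam)$ must have rank $n$ everywhere. I do not anticipate a serious obstacle here: the only delicate point is arranging the Smith decomposition in step $(b)\Rightarrow(c)$ so that a single unit $E$ acts on the \emph{left} and produces $\bmatrix{I_n\\ 0}$ exactly (rather than merely up to a further right multiplication by a unit in $\mathrm{GL}_n$), which is handled by the block-diagonal rescaling above.
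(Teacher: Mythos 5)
Your proposal is correct and follows essentially the same route as the paper, whose proof simply cites the Smith form of $\mathcal{W}$ together with the arguments of Theorem~\ref{gcrd} and Proposition~\ref{coprime}; your chain $(a)\Leftrightarrow(b)\Rightarrow(c)\Rightarrow(d)\Rightarrow(b)$ with $(c)\Rightarrow(e)\Rightarrow(b)$ is just an explicit organization of those details. The only point worth flagging (a gap shared with the paper's own statement, not introduced by you) is that in $(b)\Rightarrow(c)$ you tacitly assume $\nrank(\mathcal{W})=n$: if $\nrank(\mathcal{W})<n$ then $\sig_{\Omega}(\mathcal{W})$ can be empty while the Smith form still carries a zero block, so $(b)$ holds but $(a)$, $(c)$, $(d)$, $(e)$ all fail --- e.g.\ $A=B=\bmatrix{1 & 0}$ with $n=2$.
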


\vone \begin{proof} The proof follows from Smith form of $\mathcal{W}(z)$ and similar arguments as those used in the proofs of Theorem~\ref{gcrd} and Proposition~\ref{coprime}. \end{proof} 
\vone

We now explore solution of a matrix equation of the form $$ X(z) A(z) + Y(z) B(z)  = C(z)   \text{ or } \bmatrix{X(z) & Y(z)} \bmatrix{A(z)\\ B(z)} = C(z),$$ where the matrices $ A \in \mathbb{H}(\Omega)^{m\times n}, B \in \mathbb{H}(\Omega)^{p\times n}$ and $ C \in \mathbb{H}(\Omega)^{r\times n}$ are given and the matrices $ X \in \mathbb{H}(\Omega)^{r\times m}$ and $ Y \in \mathbb{H}(\Omega)^{r\times n}$ are  sought to be determined. 

\vone 

\begin{theorem}\label{bez} Let $ A \in \mathbb{H}(\Omega)^{m\times n}, B \in \mathbb{H}(\Omega)^{p\times n}$ and $ C \in \mathbb{H}(\Omega)^{r\times n}$. 	Then the  matrix equation  $X(z) A(z) + Y(z) B(z)  = \bmatrix{X(z) & Y(z)} \bmatrix{A(z)\\ B(z)} = C(z)$ has a solution $ \bmatrix{X(z) & Y(z)} \Longleftrightarrow$ every gcrd of $A(z)$ and $B(z)$ is a right divisor of $C(z).$

In particular, if $A(z)$ and $B(z)$ are right coprime, then the matrix equation always has a solution. 
\end{theorem}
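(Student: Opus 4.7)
The plan is to prove both directions via the gcrd machinery developed in Theorem~\ref{gcrd} and the Smith decomposition behind it. For necessity, I would suppose a solution $[X(z) \; Y(z)]$ exists and let $D(z)$ be any gcrd of $A(z)$ and $B(z)$. By definition of common right divisor, $A = Q_1 D$ and $B = Q_2 D$ for some holomorphic $Q_1, Q_2$, and substituting into $XA + YB = C$ gives $C = (XQ_1 + YQ_2)D$, so $D$ is a right divisor of $C$. This direction is the easy one.

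For sufficiency, the strategy is to produce one explicit gcrd together with a B\'ezout-style identity witnessing it, and then scale. Assuming $\nrank\mathcal{W} = n$ with $\mathcal{W} := \bmatrix{A\\B}$ (otherwise the statement is degenerate, with every common right divisor itself being a right divisor of the zero block structure), Theorem~\ref{gcrd} furnishes a right-structure matrix $D_R \in \mathbb{H}(\Omega)^{n\times n}$ of $\mathcal{W}$ which is a gcrd of $A$ and $B$. The proof of that theorem exhibits $E \in \mathrm{GL}_{m+p}(\mathbb{H}(\Omega))$ with $\mathcal{W} = E\bmatrix{D_R \\ 0}$. Conformally partitioning $E^{-1} = \bmatrix{X_0 & Y_0 \\ \star & \star}$ with $X_0 \in \mathbb{H}(\Omega)^{n\times m}$ and $Y_0 \in \mathbb{H}(\Omega)^{n\times p}$, the identity $E^{-1}\mathcal{W} = \bmatrix{D_R\\0}$ reads $X_0(z)A(z) + Y_0(z)B(z) = D_R(z)$ on all of $\Omega$. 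By hypothesis $D_R$ is a right divisor of $C$, so $C = Q D_R$ for some $Q \in \mathbb{H}(\Omega)^{r\times n}$. Setting $X := Q X_0$ and $Y := Q Y_0$ then gives $XA + YB = Q(X_0 A + Y_0 B) = Q D_R = C$, proving solvability.

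The right-coprime case is an immediate corollary: by Theorem~\ref{cprime2}(d) one can take $X_0, Y_0$ with $X_0 A + Y_0 B = I_n$, whence $X := C X_0$ and $Y := C Y_0$ solve the equation for any right-hand side $C$.

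The only real subtlety, and the one I would want to flag, is the appeal to Theorem~\ref{gcrd} to obtain a gcrd whose defining equation is also accompanied by a B\'ezout-type identity $X_0 A + Y_0 B = D_R$. This is exactly what the Smith decomposition provides, so nothing new needs to be proved, but it is the engine of the argument: without the global invertibility of $E$ on $\Omega$, one could not conclude that the constructed $X, Y$ are holomorphic everywhere. Once that is in place, the rest is multiplication by the cofactor $Q$ coming from $C = QD_R$.
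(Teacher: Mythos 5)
Your proposal is correct and follows essentially the same route as the paper's proof: the easy direction by substituting $A=Q_1D$, $B=Q_2D$ into $XA+YB$, and the converse by extracting the B\'ezout identity $X_0A+Y_0B=D_R$ from the invertible factor in the Smith decomposition of $\bmatrix{A\\ B}$ and left-multiplying by the cofactor $Q$ with $C=QD_R$. The coprime case is likewise handled the same way, so no substantive difference to report.
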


\von \begin{proof}  Suppose that a solution $\bmatrix{X(z) & Y(z)}$ exists. If $D(z)$ is a gcrd of $ A(z)$ and $B(z)$ then it follows that $ D(z)$ is a right divisor of $C(z).$

Conversely, suppose that every gcrd of $A(z)$ and $B(z)$ is a right divisor of $ C(z).$ Let $ D(z)$ be a gcrd of $A(z)$ and $ B(z)$. Then $ C(z) = R(z) D(z).$ Set $ \mathcal{W}(z) := \bmatrix{ A(z) \\ B(z)}.$ Then by (\ref{rd}), we have  $$ E(z) \mathcal{W}(z) = \bmatrix{ D(z) \\ 0},$$ where $ E$ is invertible. Consider the conformal partition $E(z) = \bmatrix{ Q_1(z) & Q_2(z) \\ P_1(z) & P_2(z)}.$ Then $ Q_1(z) A(z) + Q_2(z) B(z) = D(z).$  Left multiplying on both side by $R(z)$, we have   $$  R(z) Q_1(z) A(z) + R(z) Q_2(z) B(z) = R(z)D(z) = C(z).$$ Setting $ X(z) := R(z) Q_1(z)$ and $ Y(z) := R(z)Q_2(z),$ we have $$ X(z)A(z) + Y(z)B(z) = C(z).$$ Hence $ \bmatrix{X(z) & Y(z)}$ is a solution of the matrix equation. 

\vone
Now, if $A(z)$ and $B(z)$ are right coprime then a gcrd $D(z)$ is invertible. Then $ Q_1(z) A(z) + Q_2(z) B(z) = D(z) \Longrightarrow D(z)^{-1}Q_1(z) A(z) + D(z)^{-1}Q_2(z) B(z) = I_n.$ Left multiplying on both sides by $C(z)$, we have $$ C(z)D(z)^{-1}Q_1(z)A(z) + C(z) D(z)^{-1}Q_2(z) B(z) = C(z).$$ Setting $ X(z) := C(z)D(z)^{-1}Q_1(z) $ and $ Y(z) := C(z) D(z)^{-1}Q_2(z)$, we obtain a solution $\bmatrix{X(z) & Y(z)}$ of the  matrix equation. 
\end{proof}

\vone

If $ f \in \mathbb{M}(\Omega)$ then there exist $ p, q \in \mathbb{H}(\Omega)$ such that $ p$ and $q$ are coprime and  $\displaystyle f = \frac{p}{q},$ see~\cite{jali, rafiams, rmt}. A similar matrix-faction description (MFD) holds for $ M \in \mathbb{M}(\Omega)^{m\times n}.$ In fact, $M(z)$ can be represented as $$ M(z) = N_R(z)D_R(z)^{-1} = D_L(z)^{-1} N_L(z),$$ where $ N_L, N_R, D_L, D_R$ are holomorphic and $D_L$ and $D_R$ are regular. We discuss only a right MFD as the case of a left MFD is similar.

\vone 

%
%
%

 \begin{definition} [Right MFD] Let $M \in \mathbb{M}(\Omega)^{m\times n}$.  A right matrix-fraction description (MFD) of $M(z)$ is a decomposition of $M(z)$  of the form $M(z)= N(z)D(z)^{-1}$, where $ N \in \mathbb{H}(\Omega)^{m\times n}$ and $D\in \mathbb{H}(\Omega)^{n\times n}$ with $D(z)$ being regular. 
 	
 	 \von	
 	A right MFD  $M(z)= N(z)D(z)^{-1}$ is a said to be right coprime if $N(z)$ and $D(z)$ are right coprime, that is, if 
	$\rank \left[ \begin{array}{c}
		N(z) \\ D(z)
	\end{array}\right] = n$ for  $z \in \C.$
	
\end{definition}

\vone 

A right coprime MFD of $M(z)$ exists and can be deduced from the Smith-McMillan form of $M(z).$ Indeed, we have the following result.

\vone 
\begin{proposition}\label{rmfd}  Let $M \in \mathbb{M}(\Omega)^{m\times n}.$ Then there exist non-unique pair $( N, D) \in \mathbb{H}(\Omega)^{m\times n} \times  \mathbb{H}(\Omega)^{n\times n} $ such that $N(z)$ and $D(z)$ are right coprime, $D(z)$ is regular, and $$ M(z) = N(z) D(z)^{-1}.$$ 
\end{proposition}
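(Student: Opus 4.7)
The plan is to construct $N$ and $D$ explicitly from the Smith--McMillan decomposition of $M(z)$ provided by Theorem~\ref{smc}, then verify holomorphy, regularity of $D$, and right coprimality in turn; non-uniqueness will be automatic.

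First I would invoke Theorem~\ref{smc} to write $M(z) = E(z)\Sigma_M(z)F(z)$ with $E \in \mathrm{GL}_m(\mathbb{H}(\Omega))$, $F \in \mathrm{GL}_n(\mathbb{H}(\Omega))$, and $\Sigma_M(z) = \mathrm{diag}(\phi_1/\psi_1, \ldots, \phi_r/\psi_r) \oplus 0_{m-r, n-r}$, where $r = \nrank(M)$ and $\mathrm{gcd}(\phi_j, \psi_j)$ is a unit in $\mathbb{H}(\Omega)$ for every $j$. Next I would split the diagonal part by setting $\widetilde{N} \in \mathbb{H}(\Omega)^{m \times n}$ to be the matrix with $\phi_j$ in position $(j,j)$ for $j=1,\ldots,r$ and zeros elsewhere, and $\widetilde{D} := \mathrm{diag}(\psi_1, \ldots, \psi_r, 1, \ldots, 1) \in \mathbb{H}(\Omega)^{n \times n}$. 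Then $\widetilde{D}$ is regular and $\Sigma_M = \widetilde{N}\widetilde{D}^{-1}$, so defining $N(z) := E(z)\widetilde{N}(z)$ and $D(z) := F(z)^{-1}\widetilde{D}(z)$ gives $N D^{-1} = E\widetilde{N}(F^{-1}\widetilde{D})^{-1} = E\widetilde{N}\widetilde{D}^{-1}F = E\Sigma_M F = M$, with both $N, D$ holomorphic and $D$ regular (since $F^{-1}$ is a unit and $\widetilde{D}$ is regular).

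The crucial step is coprimality. By Proposition~\ref{coprime} it suffices to verify that $\rank\bmatrix{N(z) \\ D(z)} = n$ for every $z \in \Omega$. Since
\[
\bmatrix{N(z) \\ D(z)} = \bmatrix{E(z) & 0 \\ 0 & F(z)^{-1}} \bmatrix{\widetilde{N}(z) \\ \widetilde{D}(z)}
\]
and the left factor lies in $\mathrm{GL}_{m+n}(\mathbb{H}(\Omega))$, this reduces to showing $\bmatrix{\widetilde{N}(z) \\ \widetilde{D}(z)}$ has column rank $n$ pointwise. The last $n-r$ columns of this stacked matrix are standard basis vectors (contributed by the identity block in $\widetilde{D}$) supported on disjoint rows, so they are automatically independent of each other and of the first $r$ columns; for column $j$ with $1 \le j \le r$, the column is supported on rows $\{j, m+j\}$ alone and vanishes at $z$ iff $\phi_j(z) = \psi_j(z) = 0$, which is ruled out precisely by the coprimality of $\phi_j$ and $\psi_j$ from Theorem~\ref{smc}(c). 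Hence all $n$ columns are independent at every $z \in \Omega$.

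Non-uniqueness is immediate: for any $U \in \mathrm{GL}_n(\mathbb{H}(\Omega))$ the pair $(NU, DU)$ is another right coprime MFD, since $(NU)(DU)^{-1} = M$ and $\bmatrix{NU \\ DU} = \bmatrix{N \\ D} U$ retains full column rank pointwise. I expect the main conceptual obstacle to be precisely the step highlighted above: translating the \emph{scalar} coprimality $\mathrm{gcd}(\phi_j, \psi_j) = 1$ into the \emph{matrix-level} right coprimality of $(N, D)$ through Proposition~\ref{coprime}. Everything else amounts to bookkeeping around the Smith--McMillan factorization.
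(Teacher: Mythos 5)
Your proposal is correct and follows essentially the same route as the paper: both extract $N_\phi = \diag(\phi_1,\ldots,\phi_r)\oplus 0$ and $D_\psi = \diag(\psi_1,\ldots,\psi_r)\oplus I_{n-r}$ from the Smith--McMillan form, set $N = E N_\phi$, $D = F^{-1}D_\psi$, and reduce the coprimality check to the pointwise rank of the stacked diagonal blocks. Your explicit justification that $\phi_j$ and $\psi_j$ cannot vanish simultaneously is a slightly more detailed rendering of the same rank computation the paper performs.
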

 \begin{proof} Let $ \Sigma_M(z) = \diag( \phi_1(z)/\psi_1(z), \ldots, \phi_r(z)/\psi_r(z), 0_{m-r, n-r})$  be the Smith-McMillan form of $M(z).$ Then by Theorem~\ref{smc}, we have $ M(z) = E(z) \Sigma_M(z) F(z)$, where $ E$ and $F$ are invertible.   
 Define   $$ N_\phi(z): = \text{diag}(\phi_1(z),\cdots,\phi_r(z)) \oplus 0_{m-r, n-r} \text{ and }  D_\psi (z): = \text{diag}(\psi_1(z),...,\psi_r(z)) \oplus I_{n-r}.$$
Then $ \Sigma_M(z) = N_\phi(z) D_\psi(z)^{-1}$ is a right coprime MFD of $\Sigma_M(z).$ Since $ E$ and $F$ are invertible, setting $ N(z) := E(z) N_\phi(z)$ and $ D(z) := F(z)^{-1} D_\psi(z),$ it follows that $N(z)$ and $D(z)$ are right coprime and that $ M(z) = E(z)\Sigma_M(z) F(z) = N(z) D(z)^{-1}.$ Indeed, $$ \rank \bmatrix{ N(z) \\ D(z) } = \rank \bmatrix{ E(z) N_\phi(z)\\ F(z)^{-1} D_\psi(z)} = \rank \bmatrix{ N_\phi(z) \\ D_\psi(z)} = n$$ for all $ z \in \Omega.$ 
Hence $M(z) = N(z) D(z)^{-1}$ is a right coprime MFD. 
\end{proof}

\vone 
We now show that a right coprime MFD of $M(z)$ is unique up to a unit element.

\vone

\begin{theorem}\label{eqv:mfd}
	Let $M \in \mathbb{M}(\Omega)^{m\times n}$. If  $M(z)= N_1(z)D_1(z)^{-1} =  N_2(z)D_2(z)^{-1} $ are right coprime MFDs then there exists $ U \in \mathrm{GL}_n(\mathbb{H}(\Omega))$  such that $N_1(z)= N_2(z)U(z)$ and $D_1(z)= D_2(z)U(z).$

	\von In particular, if $D_1(z)$ and $D_2(z)$ are matrix polynomials then $U(z)$ is a unimodular matrix polynomial. 
\end{theorem}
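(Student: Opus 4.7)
The plan is to set $U := D_2^{-1}D_1$, a priori an element of $\mathbb{M}(\Omega)^{n\times n}$, and to show that $U \in \mathrm{GL}_n(\mathbb{H}(\Omega))$. Once that is achieved, the identities $D_1 = D_2 U$ and $N_1 = N_2 U$ are immediate: the former is just the definition of $U$, and the latter follows by right-multiplying $N_1 D_1^{-1} = N_2 D_2^{-1}$ by $D_1$ to obtain $N_1 = N_2 D_2^{-1} D_1 = N_2 U$. Thus the entire content of the theorem is concentrated in the claim that $U$ is holomorphic with holomorphic inverse.

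The main step is a double application of Theorem~\ref{cprime2}(d). First I would apply it to the right coprime pair $(N_1, D_1)$ to obtain holomorphic matrices $X \in \mathbb{H}(\Omega)^{n\times m}$ and $Y \in \mathbb{H}(\Omega)^{n\times n}$ with $XN_1 + YD_1 = I_n$; substituting $N_1 = N_2 U$ and $D_1 = D_2 U$ yields $(XN_2 + YD_2)U = I_n$, which says only that $U$ admits a holomorphic left inverse in $\mathbb{M}(\Omega)^{n\times n}$. To promote this to holomorphy of $U$ itself, I would invoke Theorem~\ref{cprime2}(d) symmetrically on $(N_2, D_2)$: choosing holomorphic $X', Y'$ with $X'N_2 + Y'D_2 = I_n$ and substituting $N_2 = N_1 U^{-1}$, $D_2 = D_1 U^{-1}$ gives $(X'N_1 + Y'D_1)U^{-1} = I_n$, whence $U = X'N_1 + Y'D_1 \in \mathbb{H}(\Omega)^{n\times n}$. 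Combined with the earlier identity $XN_2 + YD_2 = U^{-1}$, this exhibits both $U$ and $U^{-1}$ as holomorphic matrices, so $U \in \mathrm{GL}_n(\mathbb{H}(\Omega))$.

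For the polynomial addendum, taking the natural choice $\Omega = \mathbb{C}$, the matrix $U = D_2^{-1}D_1$ is a rational matrix which the preceding argument shows is entire, hence a matrix polynomial; the same reasoning applied to $U^{-1}$ yields a polynomial inverse, and then $\det(U)\det(U^{-1}) = 1$ in $\mathbb{C}[z]$ forces $\det(U)$ to be a nonzero constant, so $U$ is unimodular.

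The principal technical obstacle is precisely the step from \emph{$U$ has a holomorphic left inverse} to \emph{$U$ is itself holomorphic}, and the symmetric use of the Bezout identity is the essential maneuver; without exploiting coprimality on both sides, one would be forced to rule out each putative pole of $U$ by a local Smith-form analysis, which is considerably more delicate.
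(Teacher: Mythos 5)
Your proposal is correct and follows essentially the same route as the paper's proof: define $U = D_2^{-1}D_1$, use the Bezout identity from Theorem~\ref{cprime2}(d) for the coprime pair $(N_1,D_1)$ to get $U^{-1} = XN_2+YD_2$ holomorphic, then apply it symmetrically to $(N_2,D_2)$ to get $U$ itself holomorphic, and conclude the polynomial case by noting that an entire rational matrix with entire rational inverse is unimodular. Your explicit remark that the rational-implies-polynomial step uses $\Omega=\C$ is a small point of care the paper leaves implicit.
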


\begin{proof} Note that   $ \wp_{\Omega}(M) = \sig_{\Omega} (D_1) = \sig_{\Omega}(D_2)$.
	Since $N_1(z)D_1(z)^{-1} =  N_2(z)D_2(z)^{-1} $, we have $N_1(z)= N_2(z)D_2(z)^{-1}D_1(z) = N_2(z) U(z),$ where $U(z):= D_2(z)^{-1}D_1(z).$ Then $ U \in  \mathbb{M}(\Omega)^{n\times n}$. We show that  $U \in \mathrm{GL}_n(\mathbb{H}(\Omega))$  by proving that $D_2(z)^{-1}D_1(z)$ and its inverse $D_1(z)^{-1}D_2(z)$ are both analytic in $\Omega.$  
	
	\von 
	
	Since $N_1(z)$ and $D_1(z)$ are right coprime, by Theorem~\ref{cprime2}(d), there exist analytic matrices $X \in \mathbb{H}(\Omega)^{n \times m}$ and $ Y \in \mathbb{H}( \Omega)^{n\times n}$ such that $X(z)N_1(z) + Y(z)D_1(z)= I_n$. 
	Inserting $N_1(z)= N_2(z)U(z) $ we have
	$$X(z)N_2(z) U(z)+ Y(z)D_1(z)= I_n \Longrightarrow [X(z)N_2(z) + Y(z)D_2(z)]U(z) = I_n.$$ This shows that $ U(z)^{-1} = X(z)N_2(z) + Y(z)D_2(z) $ is analytic in $\Omega.$  
	
	\von 
	Similarly, considering $V(z) :=D_1(z)^{-1}D_2(z) $ and interchanging the role of $D_1(z)$ and $D_2(z)$ in the above proof, it follows that $ V(z)^{-1} $ is analytic in $\Omega.$ Since $V(z)^{-1} =  U(z)$ and 
	 both $U(z)$ and $U(z)^{-1}$ are analytic in $\Omega,$ we have $ U \in \mathrm{GL}_n(\mathbb{H}(\Omega)).$ 
	This proves that $ N_1(z) = N_2(z) U(z)$ and $ D_1(z) = D_2(z) U(z).$ 
\von

Now, if $ D_1(z)$ and $D_2(z)$ are matrix polynomials, then $ U(z) = D_1(z)D_2(z)^{-1}$ is a rational matrix. Since both $U(z)$ and $U(z)^{-1}$ are analytic, both $U(z)$ and $U(z)^{-1}$ must be matrix polynomials. In other words, $U(z)$ is a unimodular matrix polynomial.  \end{proof}

\vone In view of the proof of Proposition~\ref{rmfd}, we have the following result.  \vone 

\begin{theorem} \label{rcmfd}
	Let $M \in \mathbb{M}(\Omega)^{m\times n}$. Let  $M(z) = N(z)D(z)^{-1}$ be  a right coprime MFD. Let $ \Sigma_M(z) := \diag(\phi_1(z)/\psi_1(z), \ldots, \phi_r(z)/\psi_r(z), 0_{m-r, n-r})	$ be the Smith-McMillan form of $ M(z).$ Then $ S_N(z) := \diag(\phi_1(z), \ldots, \phi_r(z), 0_{m-r, n-r})$ is the Smith form of $ N(z)$ and $ S_D(z) := I_{n-r} \oplus \diag(\psi_r(z), \psi_{r-1}(z), \ldots, \psi_1(z))$ is the Smith form of $D(z).$ 
	In particular, we have 	
	\begin{align*}
		\sigma_{\Omega} (M)&=  \sigma_{\Omega}(N), \ \  \mathrm{Ind}_e(\lambda, M) = \mathrm{Ind}_e(\lambda, N)\ \text{ for } \ \lambda \in  \sigma_{\Omega}(M), \\  
		\wp_{\Omega}(M)&=  \sigma_{\Omega}(D), \ \  \mathrm{Ind}_p(\lambda, M) = \mathrm{Ind}_e(\lambda, D)\ \text{ for } \ \lambda \in \wp_{\Omega}(M).
	\end{align*}
	
\end{theorem}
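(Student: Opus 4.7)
The plan is to exploit the explicit construction inside the proof of Proposition~\ref{rmfd} together with the essentially unique characterization of right coprime MFDs from Theorem~\ref{eqv:mfd}. Starting from the Smith--McMillan decomposition $M(z) = E(z)\Sigma_M(z) F(z)$ with $E \in \mathrm{GL}_m(\mathbb{H}(\Omega))$ and $F \in \mathrm{GL}_n(\mathbb{H}(\Omega))$, define the diagonal building blocks $N_\phi(z) := \diag(\phi_1,\ldots,\phi_r) \oplus 0_{m-r,n-r}$ and $D_\psi(z) := \diag(\psi_1,\ldots,\psi_r) \oplus I_{n-r}$. Then $\widetilde N := E N_\phi$ and $\widetilde D := F^{-1} D_\psi$ furnish one particular right coprime MFD $M = \widetilde N\,\widetilde D^{-1}$.

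Next, I invoke Theorem~\ref{eqv:mfd}: any other right coprime MFD $M = N D^{-1}$ is related to this specific one by a unit $U \in \mathrm{GL}_n(\mathbb{H}(\Omega))$, giving $N = E N_\phi U$ and $D = F^{-1} D_\psi U$. Since $E, F^{-1}, U$ are all units, we obtain the equivalences $N \sim_\Omega N_\phi$ and $D \sim_\Omega D_\psi$ on $\Omega$. In particular, $N$ and $N_\phi$ share the same Smith form, and the same holds for $D$ and $D_\psi$.

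The third step is to identify these two Smith forms. The divisibility chain $\nu_1 \leq \cdots \leq \nu_r$ from Theorem~\ref{smc} ensures $\phi_j \mid \phi_{j+1}$, so $N_\phi(z)$ is already in Smith normal form with invariant factors $\phi_1,\ldots,\phi_r$; this gives $S_N(z)$ as claimed. For $D_\psi(z)$ the situation is dual: the chain $\kappa_1 \geq \cdots \geq \kappa_r$ gives $\psi_{j+1} \mid \psi_j$, which is the wrong order for Smith form. A permutation of rows/columns (which is unimodular, hence a unit in $\mathbb{H}(\Omega)^{n \times n}$) reverses the $\psi_j$'s and shifts the $1$'s of the block $I_{n-r}$ to the top left, producing the required Smith form $S_D(z) = I_{n-r} \oplus \diag(\psi_r, \psi_{r-1}, \ldots, \psi_1)$.

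Finally, the ``in particular'' statements are read off from these Smith forms: the non-unit invariant factors of $N$ are precisely $\phi_1, \ldots, \phi_r$, so $\sigma_\Omega(N)$ and $\mathrm{Ind}_e(\lambda, N)$ are determined by the principal divisors $\nu_j$ and thus coincide with $\sigma_\Omega(M)$ and $\mathrm{Ind}_e(\lambda, M)$; similarly, the non-unit invariant factors of $D$ are $\psi_r, \ldots, \psi_1$, whose principal divisors $\kappa_r, \ldots, \kappa_1$ deliver $\wp_\Omega(M) = \sigma_\Omega(D)$ and $\mathrm{Ind}_p(\lambda, M) = \mathrm{Ind}_e(\lambda, D)$. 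The only subtle point is the bookkeeping around the reversal of the $\psi_j$'s and the padding by $I_{n-r}$ when matching the pole index of $M$ (an $r$-tuple) with the zero index of $D$ (an $n$-tuple); this is the main place where care is required, but it is a routine verification once the Smith form of $D_\psi$ is made explicit.
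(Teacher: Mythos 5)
Your proposal is correct and follows essentially the same route as the paper: construct the particular right coprime MFD $E N_\phi (F^{-1}D_\psi)^{-1}$ from the Smith--McMillan form, invoke Theorem~\ref{eqv:mfd} to relate it to the given MFD by a unit $U$, and read off the Smith forms of $N$ and $D$ from those of $N_\phi$ and $D_\psi$. Your treatment is in fact slightly more careful than the paper's, as you make explicit both the permutation needed to put $D_\psi$ into Smith form and the padding/reversal bookkeeping behind the identity $\mathrm{Ind}_p(\lambda,M)=\mathrm{Ind}_e(\lambda,D)$.
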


\vone 
\begin{proof}  The proof of Proposition~\ref{rmfd} shows that $$ 	M(z)= E(z)\Sigma_M(z) F(z)= E(z)N_\phi (z)D_\psi(z)^{-1} F(z) = N(z) D(z)^{-1}$$ are right coprime MFDs. Hence by Theorem~\ref{eqv:mfd}, there exists $U \in \mathrm{GL}_n(\mathbb{H}(\Omega))$ such that $  N(z) = E(z) N_{\phi}(z) U(z) $ and $ D(z) = F(z)^{-1}D_{\psi}(z) U(z).$  Consequently, $$ S_N(z) = S_{N_\phi}(z) = N_{\phi}(z) = \diag(\phi_1(z), \ldots, \phi_r(z), 0_{m-r, n-r}) $$ is the Smith form of $N(z)$ and $$ S_D(z) = S_{D_{\psi}}(z) =   \diag(I_{n-r}, \psi_r(z), \psi_{r-1}(z), \ldots, \psi_1(z))$$ is the Smith form of $D(z).$

	Obviously, the zeros and poles of $M(z)$ are given by the eigenvalues of  $N_\phi(z)$ and $D_\psi (z)$, respectively.  
	In fact, we have 
	$$ 	\sigma_{\Omega}(M)=  \sig_{\Omega}(N_\phi) = \sig_{\Omega}(N) \text{ and } 	\wp_{\Omega}(M)=  \sig_{\Omega}(D_\psi) = \sig_{\Omega}(D). $$
	Further, we have 	
\beano	
		\mathrm{Ind}_e(\lambda, M) &=& \mathrm{Ind}_e(\lambda, N_\phi) = \mathrm{Ind}_e(\lambda, N)\ \text{for} \ \lambda \in  \sigma(M), \\  
		\mathrm{Ind}_p(\lambda, M) &=& \mathrm{Ind}_e(\lambda, D_\psi) = \mathrm{Ind}_e(\lambda, D)\ \text{for} \ \lambda \in \wp_{\Omega}(M).
\eeano
\end{proof}

We mention that similar results hold for left coprime MFDs of $M(z).$   \vone

{\bf Least order:} The  least order of a rational matrix $G(z) \in \C(z)^{m\times n}$ plays an important role linear systems theory which is defined as follows; see~\cite[p.38]{vardulakis}. Let $G(z) = N(z) D(z)^{-1}$ be a right coprime MFD of $G(z)$, where $N(z) \in \C[z]^{m\times n}$ and $D(z) \in \C[z]^{n\times n}$ is regular. Then $ \widehat{\nu}(G) := \deg \det(D(z))$ is called the {\em least order} of $G(z).$  The least order $\widehat{\nu}(G)$ is independent of a particular choice of right coprime MFD of $G(z).$ 
If $G(z) = \widehat{N}(z) \widehat{D}(z)^{-1}$ is a right MFD then $\widehat{\nu}(G) \leq \deg \det (\widehat{D}(z)).$ Further, if  $ G(z) = D_L(z)^{-1} N_L(z)$ is a left coprime MFD then $\widehat{\nu}(G) = \deg \det( D_L(z)).$ Furthermore, if the Smith-McMillan form~\cite{kailath, vardulakis, rosenbrock70} of $G(z)$ is given by $$ G(z) = \phi_1(z)/\psi_1(z)\oplus \cdots \oplus \phi_r(z)/\psi_r(z) \oplus 0_{m-r, n-r}$$ then $\widehat{\nu}(G) = \deg \left( \prod^r_{j=1} \psi_j(z)\right).$ Finally, if $ G(z) = P(z) + G_{sp}(z),$ where $ P(z) \in \C[z]^{m\times n}$ and $ G_{sp}(z) \in \C(z)^{m\times n}$ is strictly proper, then the McMillan degree of $G(z)$ is defined as $\delta_M(G) := \widehat{\nu}(G_{sp}) + \widehat{\nu}( P(1/z))$. See~\cite[p.38-39]{vardulakis} for further details.

\vone

Now, we  extend the notion of least order of  a rational matrix to a meromorphic matrix. Let  $ M \in \mathbb{M}(\Omega)^{m\times n}$ and $M(z) = N(z) D(z)^{-1}$ be a right coprime MFD, where $ N \in  \mathbb{H}(\Omega)^{m\times n}$ and $ D \in  \mathbb{H}(\Omega)^{n\times n}$ is regular. Since $\deg \det (D(z))$ makes no sense unless $\det(D(z))$ is a  polynomial, to overcome the difficulty we introduce the notion of order of $D(z)$ which, in a sense, generalizes the notion of degree of scalar polynomial to a holomorphic function.  
 \von

\begin{definition}[order]\label{ord}  Let $ A \in \mathbb{H}(\Omega)^{n\times n}$ be regular. Let $\partial_A$ denote that principal divisor of $\det(A(z))$ on $\Omega.$ Then $\partial_A$ is called the order of $A(z)$ on $\Omega.$ 

 \end{definition}

\vone 
Observe that  if $ A \in \mathbb{H}(\Omega)^{n\times n}$ is regular then  $\partial_A = 0 \Longleftrightarrow A  \in \mathrm{GL}_n(\mathbb{H}(\Omega)).$  Further, if 
$ B \in \mathbb{H}(\Omega)^{n\times n}$ is regular then $ \partial_{AB} = \partial_A+\partial_B.$ Hence $$ \partial_{AB} = \partial_A \Longleftrightarrow \partial_B = 0 \Longleftrightarrow B \in \mathrm{GL}_n(\mathbb{H}(\Omega)).$$
Thus,  if $ A(z) \sim_{\Omega} B(z)$ then $ \partial_A = \partial_B.$ Note that $ \supp(\partial_A) = \sig_{\Omega}(A).$ Therefore, if $A(z)$ is a matrix polynomial and  $\sig_{\C}(A) = \{ \lam_1, \ldots, \lam_\ell\}$ then  $ \supp(\partial_A) =\{ \lam_1, \ldots, \lam_\ell\}$  and  $\partial_A(\lam_j)$ is the algebraic multiplicity of $\lam_j$ for $ j=1:\ell.$  Consequently, we have 
 $$ \deg \det(A(z)) = \partial_A(\lam_1) +\cdots+ \partial_A(\lam_\ell).$$ Thus, in a sense,  the order $\partial_A$ generalizes the notion of degree of the polynomial $\det(A(z))$ when $A(z)  $ is a  regular matrix polynomial to the holomorphic function $\det(A(z))$ when $A(z)$ is a regular holomorphic matrix.  

\von

\begin{definition}[least order] Let  $ M \in \mathbb{M}(\Omega)^{m\times n}$ and $ M(z) = N(z) D(z)^{-1}$ be a right coprime MFD, where $ N \in \mathbb{H}(\Omega)^{m\times n}$ and   $ D \in \mathbb{H}(\Omega)^{n\times n}$ is regular. Then $ \nu(M) := \partial_D$ is said to be the least order of $M(z)$, where $\partial_D$ is the order of $D(z).$ 
\end{definition}

\vone

Obviously,  $\nu(M)$ is independent of a particular choice of right coprime MFD of $M(z).$ Indeed, let  $M(z) = N_1(z) D_1(z)^{-1} =  N_2(z) D_2(z)^{-1}$ be  right coprime MFDs. Then by Theorem~\ref{eqv:mfd}, there exits $ U \in \mathrm{GL}_n(\mathbb{H}(\Omega))$  such that $ N_1(z) = N_2(z) U(z) $ and $ D_1(z)= D_2(z) U(z).$ Since $\partial_U = 0,$ we have $$\nu(M_1)= \partial_{D_1} = \partial_{D_2 U} = \partial_{D_2}+\partial_U = \partial_{D_2} = \nu(M_2).$$

Let $ M(z) = N(z) D(z)^{-1} $ be a right coprime MFD. Also, let $ M(z) = N_R(z) D_R(z)^{-1}$ be a right MFD.  If $N_R(z)$ and $D_R(z)$ are not right coprime then in view of Theorem~\ref{rcmfd}, we have $\wp_{\Omega}(M) = \sig_{\Omega}(D)  \subset \sig_{\Omega}(D_R)$ and $ \nu(M) = \partial_D \leq \partial_{D_R}.$   Further,  if $ M(z) = D_L(z)^{-1} N_L(z)  = N_R(z) D_R(z)^{-1}$  are left  and right coprime  MFDs then by Theorem~\ref{rcmfd}, we have  $ \nu(M) = \partial_{D_R} = \partial_{D_L}.$

If  $ M_1, M_2 \in \mathbb{M}(\Omega)^{m\times n}$ and $ M_1(z) \sim_{\Omega} M_2(z)$ then $ \nu(M_1) = \nu(M_2).$  Indeed, let $ M_1(z) = E(z) M_2(z) F(z),$ where $ E \in \mathrm{GL}_m(\mathbb{H}(\Omega))$ and $ F \in \mathrm{GL}_n(\mathbb{H}(\Omega)).$ Let $ M_2(z) = N(z) D(z)^{-1}$ be a right coprime MFD. Then $ \nu(M_2) = \partial_D.$ Now  $$ M_1(z) = E(z) N(z) D(z)^{-1} F(z) = (E(z)N(z)) ( F(z)^{-1}D(z))^{-1}$$ is a right coprime MFD of $ M_1(z).$ Hence   $ \nu(M_1) = \partial_{F^{-1}D} = \partial_D = \nu(M_2)$ since $ F^{-1} \in  \mathrm{GL}_n(\mathbb{H}(\Omega)).$ 
Next, let  $$ \Sigma_M (z) = \phi_1(z)/\psi_1(z)\oplus \cdots \oplus \phi_r(z)/\psi_r(z) \oplus 0_{m-r, n-r}$$ be the Smith-McMillan form of $M(z)$ and $\kappa_1 \geq \cdots \geq \kappa_r$ be the principal divisors of the invariant pole functions $\psi_1, \ldots, \psi_r$, respectively. Let $ \psi(z) := \prod^r_{j=1} \psi_j(z)$. Then by Theorem~\ref{rcmfd}, we have $ \nu(M) = \partial_{D} = \kappa_1 + \cdots + \kappa_r= \nu_{\psi},$ where $\nu_{\psi}$ is the principal divisor of $\psi(z)$ on $\Omega.$ In particular, if  $M(z)$ is a rational matrix with right coprime MFD $ M(z) = N(z) D(z)^{-1}$ and $\wp_{\C}(M) = \{ \lam_1, \ldots, \lam_\ell\}$ then  
$$ \widehat{\nu}(M) =\deg \det (D(z)) = \nu(M)(\lam_1)+\cdots + \nu(M)(\lam_\ell)  = \deg (\psi)$$ is the least order of the rational matrix $M(z).$   This proves the following result.

\vone \begin{theorem}\label{lord}  Let  $ M \in \mathbb{M}(\Omega)^{m\times n}$.    Then the following results hold. 
\begin{itemize}
\item[(a)] Let 	$M(z) = D_L(z)^{-1} N_L(z)  = N_R(z) D_R(z)^{-1}$ be  left and right coprime  MFDs, where $ N_L, N_R \in \mathbb{H}(\Omega)^{m\times n}$ and   $ (D_L, D_R) \in \mathbb{H}(\Omega)^{m\times m} \times \mathbb{H}(\Omega)^{n\times n}$. Then $ \nu(M) = \partial_{D_L} = \partial_{D_R}$. 

\item[(b)] $\supp(\nu(M)) = \wp_{\Omega}(M).$ 
\item[(c)] If $ M(z) = N(z) D(z)^{-1}$ is a right MFD then $ \nu(M) \leq \partial_D.$
\item[(d)] We have $ \nu(M) = \nu(UMV)$ for all $ U \in \mathrm{GL}_m(\mathbb{H}(\Omega))$ and $ V \in \mathrm{GL}_n(\mathbb{H}(\Omega)).$
\item[(e)] Let  $ \Sigma_M (z) = \phi_1(z)/\psi_1(z)\oplus \cdots \oplus \phi_r(z)/\psi_r(z) \oplus 0_{m-r, n-r}$ be the Smith-McMillan form of $M(z)$ and $\kappa_1,\ldots,  \kappa_r$ be the principal divisors of  $\psi_1, \ldots, \psi_r$, respectively. Let $ \psi(z) := \prod^r_{j=1} \psi_j(z)$.	Then $ \nu(M) =  \kappa_1 + \cdots + \kappa_r= \nu_{\psi},$ where $ \nu_{\psi}$ is the principal divisor of $\psi$ on $\Omega.$
\item[(f)] If  $M(z)$ is a rational matrix with right coprime MFD $ M(z) = N(z) D(z)^{-1}$ and $\wp_{\C}(M) = \{ \lam_1, \ldots, \lam_\ell\}$ then  
\beano \widehat{\nu}(M) &=& \deg \det (D(z)) = \nu(M)(\lam_1)+\cdots + \nu(M)(\lam_\ell) \\ & =& \sum^r_{j=1} (\kappa_1(\lam_j) + \cdots + \kappa_r(\lam_j)) = \deg (\psi).\eeano
\end{itemize} 
\end{theorem}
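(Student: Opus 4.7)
The plan is to obtain all six statements by bookkeeping on principal divisors, leveraging the right-coprime MFD analysis of Theorem~\ref{rcmfd}, the uniqueness result of Theorem~\ref{eqv:mfd}, and the gcrd extraction of Theorem~\ref{gcrd}; much of the argument has already been laid out in the discussion preceding the statement.

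First I would fix a right coprime MFD $M(z) = N(z) D(z)^{-1}$ and invoke Theorem~\ref{rcmfd}, which identifies the Smith form of $D(z)$ as $I_{n-r}\oplus \diag(\psi_r(z),\ldots,\psi_1(z))$; hence $\det D(z)$ agrees with $\psi(z) := \psi_1(z)\cdots \psi_r(z)$ up to a unit of $\mathbb{H}(\Omega)$. Passing to principal divisors converts this into $\partial_D = \nu_\psi$, which is (e); applying the same reasoning to a left coprime MFD yields $\partial_{D_L} = \nu_\psi = \partial_D$, giving (a); and (b) follows since $\mathrm{supp}(\nu_\psi) = \sig_\Omega(\psi) = \wp_\Omega(M)$ by the defining property of the invariant pole functions.

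For (d), I would substitute a right coprime MFD $M_2 = N D^{-1}$ into $M_1 = E M_2 F$ to produce the candidate MFD $M_1 = (EN)(F^{-1}D)^{-1}$, verify right coprimeness via the rank criterion of Proposition~\ref{coprime} using that $\mathrm{diag}(E(z), F(z)^{-1})$ is invertible on $\Omega$, and then conclude $\nu(M_1) = \partial_{F^{-1}D} = \partial_D = \nu(M_2)$ since $F^{-1}$ is a unit. For (c), I would apply Theorem~\ref{gcrd} to the stacked matrix of a given (possibly non-coprime) right MFD $M = N D^{-1}$, extract a gcrd $G$, factor $N = \widetilde{N} G$ and $D = \widetilde{D} G$, and observe that $\widetilde{D}$ is regular by multiplicativity of $\det$; the resulting coprime MFD $M = \widetilde{N}\,\widetilde{D}^{-1}$ gives $\nu(M) = \partial_{\widetilde{D}}$, and the identity $\partial_D = \partial_{\widetilde{D}} + \partial_G$ with $\partial_G \geq 0$ delivers the inequality.

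For (f), I would specialize to the rational case, where a coprime $D$ may be taken as a matrix polynomial; then $\deg \det D(z)$ equals $\sum_{\lam_j \in \wp_\C(M)} \partial_D(\lam_j) = \sum_j \nu(M)(\lam_j)$ by the fundamental theorem of algebra, and this matches $\deg \psi$ via (e). The only mildly subtle step is the regularity of $\widetilde{D}$ in (c) after removing the gcrd, which I would justify from $\det D = \det \widetilde{D} \cdot \det G$ together with regularity of $D$; otherwise the entire theorem reduces to a bookkeeping exercise on principal divisors using results already in place.
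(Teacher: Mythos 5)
Your proposal is correct and follows essentially the same route as the paper, whose proof of this theorem is precisely the bookkeeping on principal divisors carried out in the discussion immediately preceding the statement (uniqueness of $\nu(M)$ via Theorem~\ref{eqv:mfd}, the Smith forms of $N$ and $D$ from Theorem~\ref{rcmfd} for parts (a), (b), (e), (f), the substitution $M_1=(EN)(F^{-1}D)^{-1}$ for (d), and the comparison of a general right MFD with a coprime one for (c)). Your treatment of (c) by explicitly extracting a gcrd $G$ from the stacked matrix via Theorem~\ref{gcrd} and using $\partial_D=\partial_{\widetilde D}+\partial_G$ with $\partial_G\geq 0$ is a slightly more detailed justification of the paper's one-line appeal to Theorem~\ref{rcmfd}, but it is the same argument in substance.
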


\vone At this point one may ask: Is it possible to extend the notion of McMillan degree of a rational matrix to a meromorphic matrix-valued function $ M : \C \longrightarrow \C^{m\times n}?$

 To answer the question, let us look closely at the McMillan degree of a rational matrix.  Let $ G(z) \in \C(z)^{m\times n}$. Then $G(z)$  is meromorphic on the Riemann sphere $\C_{\infty} := \C \cup\{\infty\}$ and the McMillan degree of $G(z)$ is the total count (including multiplicity) of finite and infinite poles of $G(z).$ 
 In contrast, a meromorphic matrix-valued function $ M : \C \longrightarrow \C^{m\times n}$ may not be meromorphic on $\C_{\infty}$ and can have an essential singularity at infinity. For instance, $M(z) := \diag( z, (z-1)/(e^z-1))$ is  meromorphic on $\C$ but not on $\C_{\infty}$. Indeed, we have $\wp_{\C}(M) =\{ 2n\pi i : n \in \mathbb{Z}\}$ and $M(z)$ has an essential singularity at infinity. A well known fact is that if $ f : \C \longrightarrow \C$ is meromorphic on $\C_{\infty}$ then $f$ is a rational function~\cite{alf}.  Hence, if $ M: \C\longrightarrow \C^{m\times n}$ is meromorphic on $\C_{\infty}$ then $M(z)$ is a rational matrix and in such a case the McMillan degree is already defined.  If $M(z)$ is a transcendental meromorphic matrix, that is, $M(z)$ is meromorphic on $\C$ but not a rational matrix, then either (a) $\wp_{\C}(M)$ is a finite set and $M(z)$ has an isolated essential singularity at infinity or (b) $\wp_{\C}(M)$ is an infinite set and there is a sequence $(\lam_n) $ in $\wp_{\C}(M)$ such that $ \lam_n \longrightarrow \infty$ and $ n \rightarrow \infty$   in which case $M(z)$ has a non-isolated essential singularity at infinity. Thus, for a transcendental meromorphic matrix $M(z)$, it is not clear how to define  an analogue of McMillan degree. Note that the transfer function associated with a TDS such as (\ref{tds}) is  a transcendental meromorphic matrix. So, it is an open problem to define an analogue of McMillan degree for the transfer function of a TDS and analyze its implications on the TDS. 
	

\vone
{\bf Argument principle:} We end this section with a proof of the argument principle for a regular meromorphic matrix-valued function using MFD.
The argument principle is a well known classical result in complex analysis~\cite[p.282]{churchil}. Let $f:\Omega \to \C$ be a meromorphic and $ \Gamma \subset \Omega$ be a rectifiable simple closed curve such that $\mathrm{Int}(\Gamma) \subset \Omega.$ Here $\mathrm{Int}(\Gamma)$ denotes the interior of the region enclosed by $\Gamma.$ 	
	Suppose that $f$ is analytic and nonzero on $\Gamma.$  	Let  $ P$ be the number of poles  and $ N$ be the number  of zeros of $f$ inside $\Gamma$ counting multiplicities.  Then~\cite[p.282]{churchil}  
	$$\frac{1}{2 \pi i} \int_\Gamma \frac{f'(z)}{f(z)}dz = N-P.$$

\von 
We now use MFD to provide a proof of the argument principle for a meromorphic matrix-valued function. See~\cite{gs} for argument principle for holomorphic operator-valued functions.\vone

\begin{theorem}[argument principle] \label{argp}
	Let $M \in \mathbb{M}(\Omega)^{n \times n}$ be regular. Let $ \Gamma \subset \Omega$ be a rectifiable simple closed curve such that $\mathrm{Int}(\Gamma) \subset \Omega.$ Suppose that $M$ is analytic on $\Gamma$ and  that $\Gamma$ does not pass through a zero of $M(z).$  Let   $P$ be the  number of poles (counting multiplicities) and $ N $  be the number of zeros (counting multiplicities) of $M(z)$ inside $\Gamma.$  Then
	$$\frac{1}{2 \pi i} \int_{ \Gamma} \mathrm{Tr} \left(M(z)^{-1}M'(z)\right) dz = N-P,$$ where $M'(z)$ is the derivative of $M(z)$ with respect to $z$. 
\end{theorem}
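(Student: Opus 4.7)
The plan is to reduce the matrix argument principle to the classical scalar argument principle, using a right coprime MFD and Jacobi's formula $(\log\det A)' = \mathrm{Tr}(A^{-1} A')$. First I would invoke Proposition~\ref{rmfd} to write $M(z) = N(z) D(z)^{-1}$ as a right coprime MFD with $N, D \in \mathbb{H}(\Omega)^{n \times n}$; since $M$ is regular we have $\nrank(N) = \nrank(M) = n$, so $N$ is regular as well. By Theorem~\ref{rcmfd} the invariant functions of $N$ are the invariant zero functions $\phi_1, \ldots, \phi_n$ of $M$, and those of $D$ are the invariant pole functions $\psi_1, \ldots, \psi_n$ of $M$. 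Hence up to unit factors in $\mathbb{H}(\Omega)$, $\det N(z) = \prod_{j=1}^n \phi_j(z)$ and $\det D(z) = \prod_{j=1}^n \psi_j(z)$. Consequently the order of a zero $\lambda$ of $\det N$ equals the total multiplicity $m_e(\lambda)$ of $\lambda$ as a zero of $M$, and the order of a zero $\mu$ of $\det D$ equals the total multiplicity $m_p(\mu)$ of $\mu$ as a pole of $M$.

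Next, I would check that $\det N$ and $\det D$ are both nowhere zero on $\Gamma$. Theorem~\ref{rcmfd} gives $\sigma_{\Omega}(M) = \sigma_{\Omega}(N)$ and $\wp_{\Omega}(M) = \sigma_{\Omega}(D)$. The hypothesis that $\Gamma$ avoids zeros of $M$ then forces $\det N \neq 0$ on $\Gamma$, and the hypothesis that $M$ is analytic on $\Gamma$ forces $\det D \neq 0$ on $\Gamma$. In particular, $\det N$ and $\det D$ are holomorphic (hence pole-free) on $\Omega$ with the correct zero behaviour.

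Then I would compute the matrix logarithmic derivative. From $M^{-1} = DN^{-1}$ and $(D^{-1})' = -D^{-1} D' D^{-1}$, a direct calculation gives
\begin{equation*}
M^{-1} M' = D N^{-1} N' D^{-1} - D' D^{-1}.
\end{equation*}
Taking trace and using cyclicity,
\begin{equation*}
\mathrm{Tr}(M^{-1}(z) M'(z)) = \mathrm{Tr}(N^{-1}(z) N'(z)) - \mathrm{Tr}(D^{-1}(z) D'(z)) = \frac{(\det N(z))'}{\det N(z)} - \frac{(\det D(z))'}{\det D(z)},
\end{equation*}
the last equality being Jacobi's formula applied pointwise on $\Gamma$, where both determinants are nonzero.

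Finally, integrating over $\Gamma$ and applying the classical scalar argument principle separately to the holomorphic functions $\det N$ and $\det D$, each nowhere zero on $\Gamma$, yields
\begin{equation*}
\frac{1}{2\pi i} \int_{\Gamma} \mathrm{Tr}(M^{-1}(z) M'(z))\, dz = Z_\Gamma(\det N) - Z_\Gamma(\det D),
\end{equation*}
where $Z_\Gamma(\cdot)$ counts interior zeros with multiplicity. By the first step these counts coincide with $N$ and $P$, respectively, which completes the proof. The only step that requires any care is the identification of zero orders of $\det N$ and $\det D$ with the structural multiplicities $m_e$ and $m_p$ of $M$, but this is controlled entirely by Theorem~\ref{rcmfd}; the remaining trace computation and Jacobi identity are routine.
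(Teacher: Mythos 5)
Your proposal is correct and follows essentially the same route as the paper: a right coprime MFD $M = ND^{-1}$, the identification of zeros and poles of $M$ with eigenvalues of $N$ and $D$ via Theorem~\ref{rcmfd}, the trace computation with Jacobi's formula, and the scalar argument principle applied to $\det N$ and $\det D$. Your write-up is in fact slightly more careful than the paper's in spelling out the multiplicity bookkeeping and the nonvanishing of $\det N$ and $\det D$ on $\Gamma$.
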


\vone 
\begin{proof}
	Let $M(z) = N(z)D(z)^{-1}$ be a right coprime MFD.  By Theorem  \ref{rcmfd}, the number of eigenvalues of $N(z)$ inside $\Gamma  =$ the number of zeroes of $M(z)$ inside $\Gamma = N$. Further,  the number of  eigenvalues  of $D(z)$ inside $\Gamma =$ the number of poles of $M(z)$ inside $\Gamma = P$. 

\von

Now, differentiating $M(z)$ with respect to $z$, we have  
	\begin{align*}
		M'(z) & =  N'(z)D(z)^{-1}+N(z)(D(z)^{-1})' \\ & =   N'(z)D(z)^{-1}+N(z) \left( -D(z)^{-1}D'(z)D(z)^{-1}\right).
	\end{align*}
	Hence  
	\begin{align*}
		\mathrm{Tr}\left( (M(z)^{-1}M'(z) \right)dz  & =\mathrm{Tr}\left(D(z)N(z)^{-1} \left(N'(z)D(z)^{-1}- N(z) D(z)^{-1}D'(z)D(z)^{-1} \right)\right)
		\\&= \mathrm{Tr}\left(D(z)N(z)^{-1}N'(z)D(z)^{-1}\right) - \mathrm{Tr}\left(D'(z)(D(z)^{-1}) \right) 
		\\& = \mathrm{Tr}\left(N(z)^{-1}N'(z)\right) - \mathrm{Tr}\left(D'(z)(D(z)^{-1}) \right).
	\end{align*}

\noin 	Consequently, we have 
	\begin{align*}
		\frac{1}{2 \pi i} \int_{\Gamma}\mathrm{Tr}\left( (M(z)^{-1}M'(z) \right)dz &=
		\frac{1}{2 \pi i} \int_{\Gamma} \mathrm{Tr}\left(N(z)^{-1}N(z)'\right) - \frac{1}{2 \pi i} \int_{\partial \Omega}\mathrm{Tr}\left(D(z)'(D(z)^{-1}) \right)dz \\ &= N-P.
	\end{align*}

The last equality follows from the argument principle for the complex function $ f(z) := \det(N(z))$ and the Jacobi formula~\cite{horn} $ f'(z)/f(z) = \mathrm{Tr}( N(z)^{-1}N'(z))$.  	
\end{proof}

\section{Analytic matrix description (AMD)} 
We have seen in (\ref{tds}) that a TDS gives an AMD  as well as  the associated transfer function. On the other hand, we now show that a meromorphic matrix can be represented as the transfer function of an AMD. Our aim in this section is to undertake a comprehensive analysis of AMDs,   transfer functions and their canonical forms.  \vone

We first discuss transfer function realization of a meromorphic matrix and an AMD  associated with such a realization.  The idea is to analyze zeros and poles of a meromorphic matrix via  zeros of appropriate holomorphic matrices. For instance, a function $ f \in \mathbb{M}(\Omega)$ when represented as $ f = p/q$, where $ p, q \in \mathbb{H}(\Omega)$ are coprime, can be studied by analyzing the matrix $H(z) := \bmatrix{ q(z)  & 1 \\ -p(z) & 0}.$ Indeed, $f(z)$ is the Schur complement of $q(z)$ in $H(z)$ and $\det H(z) = p(z) = q(z) f(z).$ This shows that $$  \sig_{\Omega}(f) = \sig_{\Omega}(H) \text{ and } \wp_{\Omega}(f) = \sig_{\Omega}(q).$$ We generalize the same idea to meromorphic matrices.

\von Realization theory for rational matrices is a well developed subject and has been studied extensively~\cite{kailath, vardulakis, rosenbrock70}. We now define realization of a meromorphic matrix. \von 

\begin{definition} \label{real} Let $M \in \mathbb{M}(\Omega)^{m\times n}.$ Then a representation of $M(z)$ of the form $$  M(z) = D(z) + C(z) A(z)^{-1}B(z)$$ is called a  realization of $M(z),$ where $(C, A, B) \in   \mathbb{H}(\Omega)^{m\times r} \times \mathbb{H}(\Omega)^{r\times r} \times \mathbb{H}(\Omega)^{r\times n}$ with  $A(z)$ being regular and $D \in \mathbb{H}(\Omega)^{m\times n}.$ A realization $( C, A, B, D)$ of $M(z)$  is called minimal if  $A(z), C(z)$ are right coprime and $A(z), B(z)$ are left coprime, that is, if $$ \rank \bmatrix{ A(z) \\ C(z)} = \rank \bmatrix{ A(z) & B(z)} = r \text{ for all } z \in \Omega.$$ 
	
\end{definition} 	
	
We now associate a holomorphic matrix  with a realization of $M(z)$ which can be gainfully utilized for analyzing zeros and poles of $M(z)$.  

\vone

\begin{definition}[AMD] \label{amd} Let $(C, A, B) \in   \mathbb{H}(\Omega)^{m\times r} \times \mathbb{H}(\Omega)^{r\times r} \times \mathbb{H}(\Omega)^{r\times n}$ with $A(z)$ being regular and $D \in \mathbb{H}(\Omega)^{m\times n}.$   A holomorphic matrix $\H  \in  \mathbb{H}(\Omega)^{(r+m)\times (r+n)}$ given by 
		$$\H(z):= \left[	\begin{array}{c|c}
		A(z) & B(z) \\
		\hline
		-C(z) &   D(z)
	\end{array}  \right]$$ is called an analytic matrix description (AMD) associated with the  transfer function $ M(z) := D(z) + C(z) A(z)^{-1}B(z)$. The matrix  $A(z)$ is called the state matrix of the AMD. 
The AMD $\H(z)$ is said to be irreducible if $A(z), C(z)$ are right coprime and $A(z), B(z)$ are left coprime. 
\end{definition}

\vone

Observe that the transfer function $M(z)$ is the Schur complement of $A(z)$ in $\H(z)$. Also note that $\H(z)$ is irreducible $\Longleftrightarrow (C, A, B, D)$ is a minimal realization of $M(z).$  
We mention that in the case of a rational matrix $G(z) \in \C(z)^{m\times n}$, the matrix polynomial $\mathbf{P}(z)$ in (\ref{pmd}) is referred to as {\em Rosenbrock system matrix} and  the quadruple $[A(z), B(z), C(z), D(z)] \in \C[z]^{r\times r} \times \C[z]^{r\times n} \times \C[z]^{m\times r} \times \C[z]^{m\times n}$ is referred to as a PMD (polynomial matrix description) and $A(z)$ is called the state matrix of the PMD, see~\cite{kailath, vardulakis, rosenbrock70}. See also \cite{rafinami1,DAS2019}. In this paper, we make no such distinction and refer to the Rosenbrock system matrix $\mathbf{P}(z)$ as a PMD.   
\vone

\begin{remark} \label{system} We mention that one can define multiple system matrices corresponding to a realization of a meromorphic matrix. Indeed, let $( C, A, B, D)$ be a realization of $M \in \mathbb{M}(\Omega)^{m\times n}.$ Then 
$$  \left[	\begin{array}{c|c}
D(z) & -C(z) \\
\hline
B(z) &   A(z)
\end{array}  \right] \;\; \text{ and } \;\;  \left[	\begin{array}{c|c}
D(z) & C(z) \\
\hline
-B(z) &   A(z)
\end{array}  \right] $$ are AMDs  with transfer function  $M(z) = D(z)+ C(z)A(z)^{-1}B(z).$  Further,	
	$$  \left[	\begin{array}{c|c}
		A(z) & -B(z) \\
		\hline
		C(z) &   D(z)
	\end{array}  \right] \text{ and } \left[	\begin{array}{c|c}
	-A(z) & B(z) \\
	\hline
	C(z) &   D(z)
	\end{array}  \right]  $$ are AMDs with the same transfer function. 
	
\end{remark}

\vone 

The existence of an AMD  of a meromorphic matrix follows from an MFD. \vone

\begin{theorem} Let  $ M \in \mathbb{M}(\Omega)^{m\times n}.$ Then there exists an  AMD $\H $ given by 
		$$\H(z):= \left[	\begin{array}{c|c}
		A(z) & B(z) \\
		\hline
		-C(z) &   D(z)
	\end{array}  \right] \in \mathbb{H}(\Omega)^{(r+m)\times (r+n)}$$ for some $ r \in \mathbb{N}$	such that  $ M(z) = D(z) + C(z) (A(z))^{-1} B(z)$. \end{theorem}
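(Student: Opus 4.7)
The plan is to construct an AMD directly from a right coprime MFD of $M(z)$, which exists by Proposition~\ref{rmfd}. Write $M(z) = N(z) D_R(z)^{-1}$, where $N \in \mathbb{H}(\Omega)^{m\times n}$ and $D_R \in \mathbb{H}(\Omega)^{n\times n}$ is regular. I would then set $r := n$ and make the minimal-looking choice
$$A(z) := D_R(z), \quad B(z) := I_n, \quad C(z) := N(z), \quad D(z) := 0_{m\times n}.$$

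With these definitions, $A \in \mathbb{H}(\Omega)^{r\times r}$ is regular, $B \in \mathbb{H}(\Omega)^{r\times n}$, $C \in \mathbb{H}(\Omega)^{m\times r}$, and $D \in \mathbb{H}(\Omega)^{m\times n}$, so the matrix
$$\H(z) = \left[\begin{array}{c|c} A(z) & B(z) \\ \hline -C(z) & D(z) \end{array}\right]$$
is a holomorphic matrix of size $(r+m)\times(r+n)$ of the required form. Computing the Schur complement of $A(z)$ in $\H(z)$ gives
$$D(z) + C(z) A(z)^{-1} B(z) = 0 + N(z) D_R(z)^{-1} I_n = M(z),$$
which is the claimed realization.

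No step is genuinely difficult here; the theorem is essentially a corollary of the existence of a right coprime MFD (Proposition~\ref{rmfd}), which itself rests on the Smith--McMillan decomposition (Theorem~\ref{smc}). The one thing that needs to be tracked carefully is the notational clash between the MFD denominator and the $D$-block of the AMD, which I would handle by writing $D_R$ for the former throughout the proof. I would close with the observation that the construction is highly non-unique: starting instead from a left coprime MFD $M(z) = D_L(z)^{-1} N_L(z)$ yields the alternative AMD with $r := m$, $A(z) := D_L(z)$, $B(z) := N_L(z)$, $C(z) := I_m$, $D(z) := 0_{m\times n}$, and the sign/order variants recorded in Remark~\ref{system} supply still further AMDs with the same transfer function. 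This built-in non-uniqueness is precisely what motivates the equivalence and minimality theory developed in the remainder of Section~4.
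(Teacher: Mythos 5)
Your construction is exactly the paper's: the paper also takes an MFD $M(z)=N(z)D(z)^{-1}$ and forms $\H(z)=\left[\begin{array}{c|c} D(z) & I_n\\ \hline -N(z) & 0_{m\times n}\end{array}\right]$ with $r=n$, so the proposal is correct and takes essentially the same approach. The extra remarks on coprimeness and non-uniqueness are fine but not needed for the statement.
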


\vone\begin{proof} Let $ M(z) = N(z) D(z)^{-1} $ be an MFD. Then $ \H(z) :=  \left[
	\begin{array}{c|c} D(z)  &   I_n\\
		\hline 
		- N(z) &   	0_{m\times n} 
	\end{array}
	\right]$ is an AMD with the transfer function $M(z) = N(z) D(z)^{-1}.$ 
\end{proof}

\vone Rosenbrock~\cite[p.52]{rosenbrock70} introduced the concept of {\em strict system equivalence}  in order to deal with multitude of PMDs associated with a rational transfer function. See also \cite[p.58]{vardulakis}. The  strict system equivalence is referred to as {\em Rosenbrock's system equivalence} (rse) in \cite[Ch.~8]{kailath}.

\begin{definition}[Rosenbrock's system equivalence, \cite{rosenbrock70}] 	Let $\mathbf{P}_1(z)$ and $\mathbf{P}_2(z)$ be PMDs   given by 
	$$ {\small \mathbf{P}(z):= \left[
		\begin{array}{c|c}
			A_1(z) & B_1(z) \\
			\hline
			-C_1(z) &   D_1(z)
		\end{array}  \right]_{(r+m) \times (r+n)} \emph{and} \  \  \mathbf{P}_2(z):= \left[
		\begin{array}{c|c}
			A_2(z) & B_2(z) \\
			\hline
			-C_2(z) &   D_2(z)
		\end{array}  \right]_{(\ell+m) \times (\ell+n)}}$$
	with state matrices $A_1 \in \C[z]^{r \times r} \emph{and} \ A_2 \in \C[z]^{\ell \times \ell}.$ Let $p \geq \max(r,\ell)$. Then $\mathbf{P}_1(z)$ and $\mathbf{P}_2(z)$ are said to be Rosenbrock system equivalent and written as $\mathbf{P}_1(z)\sim_{rse} \mathbf{P}_2(z)$ if there exist unimodular   $M, N \in \C[z]^{p\times p},$ $X \in \C[z]^{m \times p}$ and $Y \in \C[z]^{p \times n}$  such that 
	$$\left[
	\begin{array}{c|c}
		M(z)
		& 0 \\
		\hline
		X(z) &  I_m 
	\end{array}  \right] \left[
	\begin{array}{c|c}
		I_{p-r} & 0 \\
		\hline
		0&  \mathbf{P}_1(z)
	\end{array}  \right]\left[
	\begin{array}{c|c}
		N(z) & Y(z)\\
		\hline
		0 &  I_n
	\end{array}  \right] =  \left[
	\begin{array}{c|c}
		I_{p-\ell}  & 0 \\
		\hline
		0& \mathbf{P}_2(z)
	\end{array}  \right].$$
\end{definition}

We now extend Rosenbrock's system equivalence of PMDs to AMDs and continue to  refer to the equivalence as Rosenbrock's system equivalence (rse) of AMDs.  

\vone

\begin{definition}\label{sse} 	Let $\H_1(z)$ and $\H_2(z)$ be AMDs   given by 
	$$ {\small \H_1(z):= \left[
\begin{array}{c|c}
	A_1(z) & B_1(z) \\
	\hline
	-C_1(z) &   D_1(z)
\end{array}  \right]_{(r+m) \times (r+n)} \emph{and} \  \  \H_2(z):= \left[
\begin{array}{c|c}
	A_2(z) & B_2(z) \\
	\hline
	-C_2(z) &   D_2(z)
\end{array}  \right]_{(\ell+m) \times (\ell+n)}}$$
with state matrices $A_1 \in \mathbb{H}(\Omega)^{r \times r} \emph{and} \ A_2 \in \mathbb{H}(\Omega)^{\ell \times \ell}.$ Let $p \geq \max(r,\ell)$. Then $\H_1(z)$ and $\H_2(z)$ are said to be Rosenbrock system equivalent and written as $\H_1(z)\sim_{rse} \H_2(z)$ if there exist  $M, N \in \mathrm{GL}_p(\mathbb{H}(\Omega)),$ $X \in \mathbb{H}(\Omega)^{m \times p}$ and $Y \in \mathbb{H}(\Omega)^{p \times n}$  such that 
$$\left[
\begin{array}{c|c}
	M(z)
	& 0 \\
	\hline
	X(z) &  I_m 
\end{array}  \right] \left[
\begin{array}{c|c}
	I_{p-r} & 0 \\
	\hline
	0&  \H_1(z)
\end{array}  \right]\left[
\begin{array}{c|c}
	N(z) & Y(z)\\
	\hline
	0 &  I_n
\end{array}  \right] =  \left[
\begin{array}{c|c}
	I_{p-\ell}  & 0 \\
	\hline
	0& \H_2(z)
\end{array}  \right].$$
\end{definition}

\vone 
%


It is well known~\cite{rosenbrock70, kailath, vardulakis} that Rosenbrock's system equivalence of a PMD preserves the transfer function. It turns out that the same is true for an AMD.  

\vone \begin{theorem} \label{th:sse} Let $ \H_1(z)$ and $\H_2(z)$ be AMDs given by 	$${\small  \H_1(z):= \left[
	\begin{array}{c|c}
		A_1(z) & B_1(z) \\
		\hline
		-C_1(z) &   D_1(z)
	\end{array}  \right]_{(r+m) \times (r+n)} \emph{and} \  \  \H_2(z):= \left[
	\begin{array}{c|c}
		A_2(z) & B_2(z) \\
		\hline
		-C_2(z) &   D_2(z)
	\end{array}  \right]_{(\ell+m) \times (\ell+n)}}$$
	with state matrices $A_1 \in \mathbb{H}(\Omega)^{r \times r} \emph{and} \ A_2 \in \mathbb{H}(\Omega)^{\ell \times \ell}.$ 
	 If $ \H_1(z) \sim_{rse} \H_2(z)$ then $$M_1(z) = D_1(z) + C_1(z)A_1(z)^{-1}B_1(z) = D_2(z) + C_2(z)A_2(z)^{-1}B_2(z) =M_2(z).$$  
\end{theorem}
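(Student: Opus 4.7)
The plan is to reduce to the case of equal state dimensions by absorbing the padding blocks, and then to unpack the Rosenbrock equivalence into four block equations that can be combined algebraically to equate the two transfer functions.

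First I would observe that padding does not alter the transfer function. Set
$$\widetilde{\H}_i(z) := \left[\begin{array}{c|c} I_{p-d_i} & 0 \\ \hline 0 & \H_i(z)\end{array}\right] = \left[\begin{array}{cc|c} I_{p-d_i} & 0 & 0 \\ 0 & A_i(z) & B_i(z) \\ \hline 0 & -C_i(z) & D_i(z)\end{array}\right], \quad i=1,2,$$
where $d_1=r$, $d_2=\ell$. Writing this as a realization with augmented state matrix $\widetilde{A}_i = I_{p-d_i} \oplus A_i$, augmented input/output matrices $\widetilde{B}_i = \bigl[\begin{smallmatrix}0\\B_i\end{smallmatrix}\bigr]$, $\widetilde{C}_i = \bigl[\begin{smallmatrix}0 & C_i\end{smallmatrix}\bigr]$, and feedthrough $\widetilde{D}_i = D_i$, a direct Schur-complement calculation gives $\widetilde{D}_i + \widetilde{C}_i \widetilde{A}_i^{-1}\widetilde{B}_i = D_i + C_i A_i^{-1}B_i = M_i(z)$. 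So it suffices to prove the equality for the padded AMDs.

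Next I would set $L := \bigl[\begin{smallmatrix}M & 0 \\ X & I_m\end{smallmatrix}\bigr]$ and $R := \bigl[\begin{smallmatrix}N & Y \\ 0 & I_n\end{smallmatrix}\bigr]$, both of which are invertible in $\mathbb{H}(\Omega)$ since $M,N \in \mathrm{GL}_p(\mathbb{H}(\Omega))$. The rse condition becomes $L\,\widetilde{\H}_1\, R = \widetilde{\H}_2$. Expanding this block product and comparing the four resulting blocks yields the system
\begin{align*}
M\widetilde{A}_1 N &= \widetilde{A}_2, \\
M\bigl(\widetilde{A}_1 Y + \widetilde{B}_1\bigr) &= \widetilde{B}_2, \\
\bigl(\widetilde{C}_1 - X\widetilde{A}_1\bigr) N &= \widetilde{C}_2, \\
\bigl(X\widetilde{A}_1 - \widetilde{C}_1\bigr)Y + X\widetilde{B}_1 + \widetilde{D}_1 &= \widetilde{D}_2.
\end{align*}
In particular $\widetilde{A}_2^{-1} = N^{-1}\widetilde{A}_1^{-1} M^{-1}$ is well defined on $\Omega\setminus\sigma_{\Omega}(\widetilde{A}_1)$, which is where both transfer functions are holomorphic.

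Finally I would substitute these four identities into
$$M_2 = \widetilde{D}_2 + \widetilde{C}_2 \widetilde{A}_2^{-1}\widetilde{B}_2,$$
obtaining
$$\widetilde{C}_2 \widetilde{A}_2^{-1}\widetilde{B}_2 = \bigl(\widetilde{C}_1 \widetilde{A}_1^{-1} - X\bigr)\bigl(\widetilde{A}_1 Y + \widetilde{B}_1\bigr) = \widetilde{C}_1 Y + \widetilde{C}_1\widetilde{A}_1^{-1}\widetilde{B}_1 - X\widetilde{A}_1 Y - X\widetilde{B}_1.$$
Adding the expression for $\widetilde{D}_2$ from the fourth block equation collapses the $X\widetilde{A}_1 Y$, $X\widetilde{B}_1$, and $\widetilde{C}_1 Y$ terms pairwise, leaving $M_2 = \widetilde{D}_1 + \widetilde{C}_1 \widetilde{A}_1^{-1}\widetilde{B}_1 = M_1$, as required.

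The only real obstacle is bookkeeping: one must keep careful track of block sizes and ensure the padded identities $\widetilde{A}_i = I_{p-d_i}\oplus A_i$ are compatible with $L$ and $R$. Once the four block equations are isolated, the cancellation is essentially forced; no use of coprimeness or minimality is needed, only the invertibility of $M$ and $N$ in $\mathbb{H}(\Omega)$.
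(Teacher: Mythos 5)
Your proof is correct and is essentially the argument the paper has in mind: the paper's proof simply defers to the block-wise computation in Vardulakis (Theorem~2.13) with matrix polynomials replaced by holomorphic matrices, which is exactly the calculation you have carried out explicitly (padding, the four block identities, and the forced cancellation).
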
 	
	
\begin{proof} The proof is purely computational and follows from the same arguments as given in the proof of \cite[Theorem~2.13]{vardulakis} keeping in mind that unimodular matrix polynomials are replaced with invertible holomorphic matrices (unit elements) and matrix polynomials are replaced with holomorphic matrices. \end{proof}	

%

\vone 

The Rosenbrock's system equivalence of irreducible PMDs can be characterized by their transfer functions. \von 

\begin{theorem}[Rosenbrock, \cite{rosenbrock70}]\label{equivlnt}
 Let $\mathbf{P}_1(z)$ and $\mathbf{P}_2(z)$ be  PMDs with state matrices $A_1(z)$ and $A_2(z)$, respectively.  If $\mathbf{P}_1(z) \sim_{rse} \mathbf{P}_2(z) $ then $\mathbf{P}_1(z)$ and $\mathbf{P}_2(z)$ have the same transfer function and $\deg \det( A_1(z)) = \deg \det(A_2(z)).$

 Further, if $\mathbf{P}_1(z)$ and $\mathbf{P}_2(z)$  are irreducible then $\mathbf{P}_1(z) \sim_{rse} \mathbf{P}_2(z) $ if and only if $\mathbf{P}_1(z)$ and $\mathbf{P}_2(z)$ have the same transfer function. 
\end{theorem}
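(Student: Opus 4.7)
The proof splits into the forward direction and the converse under irreducibility. For the forward direction the two conclusions are handled separately. Equality of transfer functions is essentially free: a PMD is a special case of an AMD, and the PMD notion of rse is exactly the restriction of the AMD rse of Definition~\ref{sse} to polynomial data. Hence Theorem~\ref{th:sse} applied to $\mathbf{P}_1 \sim_{rse} \mathbf{P}_2$ immediately yields $G_1 = G_2$. For the degree identity I would extract the top-left $p\times p$ block of the rse defining equation. The cross blocks $X$ and $Y$ contribute only to the bottom $m$ rows and right $n$ columns, respectively, so they are invisible in the top-left $p\times p$ corner, and the corner equation reduces to
\[ M(z)\, \diag(I_{p-r},\, A_1(z))\, N(z) = \diag(I_{p-\ell},\, A_2(z)). \]
Taking determinants and using that $M, N$ are unimodular (hence $\det M, \det N$ are nonzero constants) gives $\det A_1 = c\,\det A_2$ with $c \in \C \setminus \{0\}$, and therefore $\deg \det A_1 = \deg \det A_2$.

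For the converse under irreducibility, suppose $\mathbf{P}_1$ and $\mathbf{P}_2$ are both irreducible and share the common transfer function $G$. The plan is to build a canonical bridge PMD from a right coprime polynomial MFD of $G$ and to show each $\mathbf{P}_i$ is rse to it. Write $G = N D^{-1}$ as a right coprime polynomial MFD and set
\[ \mathbf{P}_0(z) := \left[\begin{array}{c|c} D(z) & I_n \\ \hline -N(z) & 0 \end{array}\right]. \]
Then $\mathbf{P}_0$ has transfer function $0 + N D^{-1} I_n = G$ and is itself irreducible, because right coprimeness of the pair $(D, N)$ is exactly the MFD coprimeness and left coprimeness of $(D, I_n)$ is automatic since $I_n$ supplies a trivial left inverse. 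Once $\mathbf{P}_1 \sim_{rse} \mathbf{P}_0 \sim_{rse} \mathbf{P}_2$ is established, symmetry and transitivity of rse deliver $\mathbf{P}_1 \sim_{rse} \mathbf{P}_2$.

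The main obstacle is the explicit construction of an rse between a given irreducible $\mathbf{P}_i$ and the bridge $\mathbf{P}_0$. The idea is to use left coprimeness of $(A_i, B_i)$ via Theorem~\ref{cprime2} to obtain a B\'ezout identity, and dually for the right coprime pair $(A_i, C_i)$. Combining these identities with the observation that the MFD $G = N_i D_i^{-1}$ recovered from $\mathbf{P}_i$ is unit-equivalent to $N D^{-1}$ (Theorem~\ref{eqv:mfd}) yields the unimodular blocks $M, N$ of suitable size $p$, as well as the cross blocks $X, Y$, needed to realize Definition~\ref{sse}. The bookkeeping is delicate: $p$ must be chosen large enough for both $\mathbf{P}_i$ and $\mathbf{P}_0$ to be augmented consistently, and the unimodular data must act simultaneously on the state block (reshaping $A_i$ into $D$) and on the input/output blocks (reshaping $B_i, C_i, D_i$into $I_n, N, 0$). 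This is essentially Rosenbrock's original argument; every polynomial-ring step used there has a direct $\C[z]$ analogue, so the proof transplants once the coprimeness and MFD machinery proved above is invoked.
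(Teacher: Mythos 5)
Your proposal is correct, but note that the paper does not actually prove this theorem --- it is quoted from Rosenbrock's book --- so the natural comparison is with the paper's proof of the AMD analogue (Theorem~\ref{eqvsys} together with Theorems~\ref{th:eqv1}, \ref{rmf:eqv}, \ref{eqv:mfd} and \ref{cor:fse}(a)), and your argument is precisely that proof specialized to polynomial data: the determinant of the top-left $p\times p$ corner gives the degree identity, and the converse goes through an irreducible RMF bridge $\mathbf{P}_0$ built from a right coprime MFD, with the B\'ezout/coprimeness construction of Theorem~\ref{th:eqv1} supplying the explicit equivalence. The only part you leave as a sketch (the bookkeeping for $\mathbf{P}_i \sim_{rse} \mathbf{P}_0$) is exactly what Theorem~\ref{th:eqv1} and Theorem~\ref{rmf:eqv} carry out in detail, so no genuine gap remains.
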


\vone 

We show that an analogue of Theorem~\ref{equivlnt}  holds for AMDs. For this purpose, we need to extend {\em Fuhrmann's system equivalence} (fse)~\cite[p.67]{vardulakis} of PMDs to  AMDs.  
	
	\vone 

\begin{definition}[Fuhrmann's system equivalence for PMD, \cite{vardulakis}] 	Let $\mathbf{P}_1(z)$ and $\mathbf{P}_2(z)$ be PMDs   given by 
	$$ {\small \mathbf{P}(z):= \left[
		\begin{array}{c|c}
			A_1(z) & B_1(z) \\
			\hline
			-C_1(z) &   D_1(z)
		\end{array}  \right]_{(r+m) \times (r+n)} \emph{and} \  \  \mathbf{P}_2(z):= \left[
		\begin{array}{c|c}
			A_2(z) & B_2(z) \\
			\hline
			-C_2(z) &   D_2(z)
		\end{array}  \right]_{(\ell+m) \times (\ell+n)}}$$
	with state matrices $A_1 \in \C[z]^{r \times r} \emph{and} \ A_2 \in \C[z]^{\ell \times \ell}.$  Then $\mathbf{P}_1(z)$ and $\mathbf{P}_2(z)$ are said to be Fuhrmann system equivalent and written as $\mathbf{P}_1(z)\sim_{fse} \mathbf{P}_2(z)$ if there exist $M \in \C[z]^{\ell\times r}, X \in \C[z]^{m\times r}$  and $ N \in \C[z]^{\ell\times r}, Y \in \C[z]^{\ell\times n} $ such that 
	$$\left[
	\begin{array}{c|c}
		M(z) 	& 0_{\ell\times m} \\
		\hline
		X(z) &   I_m
	\end{array}  \right] \left[ \begin{array}{c|c}
		A_1(z) & B_1(z) \\
		\hline
		-C_1(z) &   D_1(z)
	\end{array}  \right] 
	= \left[
	\begin{array}{c|c}
		A_2(z) & B_2(z) \\
		\hline
		-C_2(z) &   D_2(z)
	\end{array}  \right] \left[
	\begin{array}{c|c}
		N(z) & Y(z)\\
		\hline
		0_{n\times r} &   I_n
	\end{array}  \right]$$ and the following conditions hold: 
	\begin{itemize} \item[(a)] $M(z)$ and $ A_2(z)$ are left coprime. \item[(b)] $A_1(z) $ and $N(z)$ are right coprime. \end{itemize} 
	
\end{definition}
\vone

We now define Fuhrmann's system equivalence of AMDs and  refer to the equivalence as {\em Fuhrmann's system equivalence} (fse) of AMDs. \\

\begin{definition}\label{fse} Let $\H_1(z)$ and $\H_2(z)$ be AMDs   given by 
	$$ {\small \H_1(z):= \left[
	\begin{array}{c|c}
		A_1(z) & B_1(z) \\
		\hline
		-C_1(z) &   D_1(z)
	\end{array}  \right]_{(r+m) \times (r+n)} \emph{and} \  \  \H_2(z):= \left[
	\begin{array}{c|c}
		A_2(z) & B_2(z) \\
		\hline
		-C_2(z) &   D_2(z)
	\end{array}  \right]_{(\ell+m) \times (\ell+n)}}$$
	with state matrices $A_1 \in \mathbb{H}(\Omega)^{r \times r} \emph{and} \ A_2 \in \mathbb{H}(\Omega)^{\ell \times \ell}$.  Then $\H_1(z)$ and $\H_2(z)$ are said to be Fuhrmann system equivalent and written as $\H_1(z)\sim_{fse} \H_2(z)$ if there exist  $M \in \mathbb{H}(\Omega)^{\ell\times r}, X \in \mathbb{H}(\Omega)^{m\times r}$  and $ N \in \mathbb{H}(\Omega)^{\ell\times r}, Y \in \mathbb{H}(\Omega)^{\ell\times n} $ such that 
	$$\left[
	\begin{array}{c|c}
		M(z) 	& 0_{\ell\times m} \\
		\hline
		X(z) &   I_m
	\end{array}  \right] \left[ \begin{array}{c|c}
	A_1(z) & B_1(z) \\
	\hline
	-C_1(z) &   D_1(z)
	\end{array}  \right] 
	 = \left[
	\begin{array}{c|c}
	A_2(z) & B_2(z) \\
	\hline
	-C_2(z) &   D_2(z)
	\end{array}  \right] \left[
	\begin{array}{c|c}
	N(z) & Y(z)\\
	\hline
	0_{n\times r} &   I_n
	\end{array}  \right]$$ and the following conditions hold: 
	\begin{itemize} \item[(a)] $M(z)$ and $ A_2(z)$ are left coprime. \item[(b)] $A_1(z) $ and $N(z)$ are right coprime. \end{itemize} 
 
\end{definition}

\vone Notice that the matrices $M(z)$ and $N(z)$ are non-square when $ r \neq \ell$. Even when $r =\ell$, unlike in the case of Rosenbrock's system equivalence,  the matrices $M(z)$ and $N(z)$ need not be unit elements of $\mathbb{H}(\Omega)^{r\times r}$. 
It turns out that, as in the case of PMDs~(see,\cite[Theorem~2.30]{vardulakis}), Fuhrmann's system equivalence and Rosenbrock's system equivalence are identical equivalence relations on AMDs. For completeness, we provide a proof as appropriate modifications  are needed in the proof  for AMDs. 

\vone 

\begin{theorem} \label{sse:fse} Let $\H_1(z)$ and $\H_2(z)$ be AMDs   given by 
	$$ {\small  \H_1(z):= \left[
	\begin{array}{c|c}
		A_1(z) & B_1(z) \\
		\hline
		-C_1(z) &   D_1(z)
	\end{array}  \right]_{(r+m) \times (r+n)} \emph{and} \  \  \H_2(z):= \left[
	\begin{array}{c|c}
		A_2(z) & B_2(z) \\
		\hline
		-C_2(z) &   D_2(z)
	\end{array}  \right]_{(\ell+m) \times (\ell+n)}}$$
	with state matrices $A_1 \in \mathbb{H}(\Omega)^{r \times r} \emph{and} \ A_2 \in \mathbb{H}(\Omega)^{\ell \times \ell}$. Then $$ \H_1(z) \sim_{fse} \H_2(z) \Longleftrightarrow \H_1(z) \sim_{rse} \H_2(z).$$

\end{theorem}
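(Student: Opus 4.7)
The plan is to adapt the template of the PMD analogue \cite[Theorem 2.30]{vardulakis} due to Fuhrmann--Pugh--Vardulakis, replacing unimodular matrix polynomials by elements of $\mathrm{GL}_p(\mathbb{H}(\Omega))$ throughout, and using Theorem \ref{cprime2} (together with its transpose for the left-coprime version) in place of the classical polynomial Bezout/completion theorem. The two implications are proved separately.

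For the direction $\H_1\sim_{fse}\H_2 \Rightarrow \H_1\sim_{rse}\H_2$, the idea is to extend the non-square fse matrices $M$ and $N$ to invertible holomorphic matrices of size $p := r+\ell \geq \max(r,\ell)$ by exploiting the coprimeness hypotheses. Specifically, the left coprimeness of $M$ and $A_2$, combined with the left-coprime form of Theorem \ref{cprime2}(e) (obtained by transposition), furnishes holomorphic matrices $P \in \mathbb{H}(\Omega)^{r\times r}$ and $Q \in \mathbb{H}(\Omega)^{r\times \ell}$ such that the block matrix with top block row $[M,\ A_2]$ and bottom block row $[P,\ Q]$ belongs to $\mathrm{GL}_{r+\ell}(\mathbb{H}(\Omega))$; a dual extension of $A_1$ and $N$ follows directly from Theorem \ref{cprime2}(e) applied to right coprimeness. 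I would then take $\widetilde M, \widetilde N \in \mathrm{GL}_p(\mathbb{H}(\Omega))$ to be appropriately row/column permuted versions of these extensions, arranged so that $M$ occupies the $(2,2)$ block of $\widetilde M$ and $N$ occupies the $(1,1)$ block of $\widetilde N$. The matrices $\widetilde X, \widetilde Y$ are built as linear combinations of $X, Y$ and the extension coefficients so that, on direct block-by-block expansion of the rse identity, the four non-identity blocks of $\mathcal{P}_2$ are produced by the fse equations $MA_1 = A_2N$, $MB_1 = A_2Y + B_2$, $C_2N = C_1 - XA_1$, $D_2 = D_1 + XB_1 + C_2Y$, while the two identity blocks of $\mathcal{P}_2$ emerge from the Bezout identities inherent in the coprime extensions.

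For the direction $\H_1\sim_{rse}\H_2 \Rightarrow \H_1\sim_{fse}\H_2$, I would partition the rse data as
$$M = \left[\begin{array}{cc} M_{11} & M_{12} \\ M_{21} & M_{22}\end{array}\right],\quad N = \left[\begin{array}{cc} N_{11} & N_{12} \\ N_{21} & N_{22}\end{array}\right],\quad X = [X_1\ \ X_2],\quad Y = \left[\begin{array}{c} Y_1 \\ Y_2 \end{array}\right]$$
conformally with the $(p-\ell, \ell)$ row split of $M$ and the $(p-r, r)$ row split of $N$ and the compatible column splits. Expanding $E\,\mathcal{P}_1\,F = \mathcal{P}_2$ yields nine block equations; the four equations not involving the identity padding are candidates for the fse identity. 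The fse matrices will be read off after first normalizing the rse identity by invertible block row/column operations that reduce $M_{11}$ and $N_{11}$ to identities and kill the off-diagonal padding blocks $M_{12}, N_{21}$. Such normalizing matrices lie in $\mathrm{GL}_p(\mathbb{H}(\Omega))$ because the $(1,1)$ block equation $M_{11} N_{11} + M_{12} A_1 N_{21} = I_{p-\ell}$ and the corresponding identity read off from $M^{-1}$ and $N^{-1}$ supply the required Bezout combinations via Theorem \ref{cprime2}(d). After normalization, the fse identity is read off directly from the blocks involving $A_2, B_2, C_2, D_2$, and the left coprimeness of $\widetilde M$ with $A_2$, along with the right coprimeness of $A_1$ with $\widetilde N$, follows from the Bezout combinations appearing in the residual blocks, again via Theorem \ref{cprime2}(d).

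The principal obstacle is the direction rse $\Rightarrow$ fse. Naively setting $\widetilde M := M_{22}$ and $\widetilde N := N_{22}$ leaves residual cross terms $M_{21} N_{12}$, $M_{21} Y_1$, etc.\ in the block equations, and the submatrices $M_{22}, N_{22}$ by themselves need not satisfy any obvious coprimeness condition with $A_2$ or $A_1$. The remedy, modelled on \cite[Theorem 2.30]{vardulakis}, is the block normalization described above; the crucial point specific to the AMD setting is that the required normalizing matrices can be chosen inside $\mathrm{GL}_*(\mathbb{H}(\Omega))$ (not merely in the meromorphic field), a fact that we obtain directly from the hypothesis $M, N \in \mathrm{GL}_p(\mathbb{H}(\Omega))$ combined with Theorem \ref{cprime2}(d). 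Once this is in place, the remainder of the argument is a transcription of the polynomial proof into the holomorphic setting, with $\C[z]$ replaced by $\mathbb{H}(\Omega)$ throughout.
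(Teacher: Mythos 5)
Your forward direction ($\H_1\sim_{fse}\H_2\Rightarrow\H_1\sim_{rse}\H_2$) is essentially the paper's: complete $\bmatrix{M & A_2}$ and $\bmatrix{A_1\\ N}$ to unit elements via the Bezout/completion statement of Theorem~\ref{cprime2} and assemble the rse transforming matrices of size $p=r+\ell$ from the completion data. One step you gloss over is that the two completions cannot be chosen independently: the paper forms the product
$$\bmatrix{-\tilde X & \tilde Y\\ A_2 & M}\bmatrix{-N & \hat X\\ A_1 & \hat Y}=\bmatrix{I_r & W\\ 0 & I_\ell}$$
and then modifies $\tilde X,\tilde Y$ to force $W=0$, so that the two completions become mutually inverse; without this compatibility adjustment the ``identity blocks'' you invoke do not actually come out as identities. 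That is a fixable omission rather than a wrong idea.

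The converse direction is where your plan has a genuine gap. The ``principal obstacle'' you identify --- residual cross terms such as $M_{21}N_{12}$ --- is an artifact of keeping both transforming matrices on the same side. Writing the rse relation in the equivalent balanced form
$$\bmatrix{M & 0\\ X & I_m}\bmatrix{I_{p-r} & 0\\ 0 & \H_1(z)}=\bmatrix{I_{p-\ell} & 0\\ 0 & \H_2(z)}\bmatrix{N & Y\\ 0 & I_n}$$
(absorb $F^{-1}$, which has the same block-triangular shape with $I_n$ in the corner, into the right-hand side) and partitioning conformally, every cross term vanishes because the padded middle factors have zero off-diagonal blocks: the $(2,2)$--$(3,3)$ block equations are exactly the fse identity with $M_{22},N_{22},X_2,Y_2$, the $(1,2)$ and $(2,1)$ equations give $N_{12}=M_{12}A_1$ and $M_{21}=A_2N_{21}$, and then the $(2,2)$ blocks of $MM^{-1}=I_p$ and $N^{-1}N=I_p$ produce Bezout identities showing $A_2,M_{22}$ left coprime and $A_1,N_{22}$ right coprime via Theorem~\ref{cprime2}(d). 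No normalization is needed. By contrast, your proposed reduction of $M_{11},N_{11}$ to identities is not justified: $M_{11}$ is a $(p-\ell)\times(p-r)$ block that need not be square, let alone invertible, and the relation $M_{11}N_{11}+M_{12}A_1N_{21}=I_{p-\ell}$ only says that $\bmatrix{M_{11} & M_{12}A_1}$ admits a holomorphic right inverse --- it does not supply unit elements of $\mathrm{GL}_p(\mathbb{H}(\Omega))$ that kill $M_{12}$ and $N_{21}$ while preserving the rse shape. As written, that normalization step would fail; the balanced-form partition above is the correct (and simpler) replacement.
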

\vone 

\begin{proof} 	
Suppose that $\H_1(z) \sim_{fse}\H_2(z).$ 	By Definition~\ref{fse}, we have $ M(z)A_1(z) = A_2(z)N(z).$ Further, since $ A_2$ and $ M(z) $ are left coprime,  by Theorem~\ref{cprime2} there exist $\hat{X} \in \mathbb{H}(\Omega)^{\ell\times \ell}$ and $  \hat{Y} \in \mathbb{H}(\Omega)^{r\times \ell}$ such that $ A_2(z) \hat{X}(z) + M(z) \hat{Y}(z) = I_{\ell}.$ Similarly, since $A_1(z)$ and $ N(z)$ are right coprime, there exist $ \tilde{X} \in \mathbb{H}(\Omega)^{r\times \ell} $ and $ \tilde{Y} \in \mathbb{H}(\Omega)^{r\times r}$ such that $ \tilde{X}(z) N(z) + \tilde{Y}(z) A_1(z) = I_r.$ Consequently, we have $$ \bmatrix{ -\tilde{X}(z) & \tilde{Y}(z) \\ A_2(z) & M(z) } \bmatrix{ -N(z) & \hat{X}(z) \\ A_1(z) & \hat{Y}(z)} = \bmatrix{ I_r & W(z) \\ 0 & I_{\ell}}, $$ where $ W(z) := -\tilde{X}(z)\hat{X}(z) + \tilde{Y}(z)\hat{Y}(z).$  Since $\bmatrix{ I_r & -W(z) \\ 0 & I_\ell} \bmatrix{I_r & W(z) \\ 0 & I_\ell} = \bmatrix{ I_r & 0 \\ 0 & I_\ell},$  it follows that $\tilde{X}(z)$ and $\tilde{Y}(z)$ can be chosen such that $W(z) = 0.$ Indeed, setting $ \tilde{X}(z) \leftarrow -(\tilde{X}(z)+ W(z) A_2(z))$ and $ \tilde{Y}(z) \leftarrow \tilde{Y}(z) - W(z) M(z),$ we have 
\be\label{unit1} \bmatrix{ -\tilde{X}(z) & \tilde{Y}(z) \\ A_2(z) & M(z) } \bmatrix{ -N(z) & \hat{X}(z) \\ A_1(z) & \hat{Y}(z)} = \bmatrix{ I_r & 0 \\ 0 & I_{\ell}}\ee which shows that the block matrices in (\ref{unit1}) are unit elements, that is,
the block matrices are holomorphic and invertible. 

\vone Now the equality in Definition~\ref{fse} can be rewritten as

\beano  && \underbrace{\left[\begin{array}{cc|c} -\tilde{X}(z) & \tilde{Y}(z) & 0 \\ A_2(z) & M(z) & 0 \\ \hline -C_2(z) & X(z) & I_m \end{array}\right]}_{\mathcal{X}(z)}  \underbrace{\left[\begin{array}{cc|c} I_\ell & 0 & 0 \\  0 & A_1(z) & B_1(z)  \\ \hline 0 & -C_1(z) & D_1(z) \end{array}\right]}_{I_\ell \oplus \H_1(z)}
 =\\ &&   \underbrace{\left[\begin{array}{cc|c} I_r & 0 & 0 \\  0 & A_2(z) & B_2(z)  \\ \hline 0 & -C_2(z) & D_2(z) \end{array}\right]}_{I_r \oplus \H_2(z)}  \underbrace{\left[\begin{array}{cc|c} -\tilde{X}(z) & \tilde{Y}(z)A_1(z) & \tilde{Y}(z) B_(z) \\ I_\ell & N(z) & Y(z) \\ \hline 0 & 0 & I_n \end{array}\right]}_{\mathcal{Y}(z)}. 
\eeano
Hence it follows that $\H_1(z)\sim_{rse} \H_2(z)$ provided that $\mathcal{X}(z)$ and $\mathcal{Y}(z)$ are unit elements. It follows from (\ref{unit1}) that $\mathcal{X}(z)$ is holomorphic and invertible, that is, $\mathcal{X}(z)$ a unit element. On the other hand, \be\label{unit2} \bmatrix{ I_{r} & \tilde{X}(z) \\ 0 & I_\ell} \bmatrix{ -\tilde{X}(z) & \tilde{Y}(z) A_1(z) \\ I_r & N(z)} = \bmatrix{ 0 & I_r \\ I_\ell & N(z)}\ee shows that the block matrices in (\ref{unit2}) are holomorphic and invertible which in turn show that $\mathcal{Y}(z)$ is holomorphic and invertible, that is, $\mathcal{Y}(z)$ is a unit element.  

\vone

Conversely, suppose that $ \H_1(z) \sim_{rse} \H_2(z).$ Then  there exist  $M, N \in \mathrm{GL}_p(\mathbb{H}(\Omega)),$ $X \in \mathbb{H}(\Omega)^{m \times p}$ and $Y \in \mathbb{H}(\Omega)^{p \times n}$  such that 
$$\left[
\begin{array}{c|c}
	M(z)
	& 0 \\
	\hline
	X(z) &  I_m 
\end{array}  \right] \left[
\begin{array}{c|c}
	I_{p-r} & 0 \\
	\hline
	0&  \H_1(z)
\end{array}  \right] =  \left[
\begin{array}{c|c}
	I_{p-\ell}  & 0 \\
	\hline
	0& \H_2(z)
\end{array}  \right] \left[
\begin{array}{c|c}
N(z) & Y(z)\\
\hline
0 &  I_n
\end{array}  \right].$$

Now taking conformal partitions of $M(z), N(z),X(z)$ and $Y(z),$ we have

\be \label{fseq}\begin{array}{l}  \left[\begin{array}{cc|c} M_{11}(z) & M_{12}(z) & 0 \\  M_{21}(z) & M_{22}(z) & 0 \\ \hline X_1(z) & X_2(z) & I_m \end{array}\right] \left[\begin{array}{cc|c} I_{p-r} & 0 & 0 \\  0 & A_1(z) & B_1(z)  \\ \hline 0 & -C_1(z) & D_1(z) \end{array}\right] = \\[1.0cm]   \left[\begin{array}{cc|c} I_{p-\ell} & 0 & 0 \\  0 & A_2(z) & B_2(z)  \\ \hline 0 & -C_2(z) & D_2(z) \end{array}\right] \left[\begin{array}{cc|c} N_{11}(z) & N_{12}(z) & Y_1(z) \\  N_{21}(z) & N_{22}(z) & Y_2(z) \\ \hline 0 & 0 & I_n \end{array}\right], \end{array}\ee
where $ M_{11} \in \mathbb{H}(\Omega)^{(p-\ell)\times (p-r)}, M_{22} \in \mathbb{H}(\Omega)^{\ell\times r}$ and  $ N_{11} \in \mathbb{H}(\Omega)^{(p-\ell)\times (p-r)}, N_{22} \in \mathbb{H}(\Omega)^{\ell\times r}.$ Now equating the blocks, we have $$\left[
\begin{array}{c|c}
	M_{22}(z) 	& 0_{\ell\times m} \\
	\hline
	X_2(z) &   I_m
\end{array}  \right] \left[ \begin{array}{c|c}
	A_1(z) & B_1(z) \\
	\hline
	-C_1(z) &   D_1(z)
\end{array}  \right] 
= \left[
\begin{array}{c|c}
	A_2(z) & B_2(z) \\
	\hline
	-C_2(z) &   D_2(z)
\end{array}  \right] \left[
\begin{array}{c|c}
	N_{22}(z) & Y_2(z)\\
	\hline
	0_{n\times r} &   I_n
\end{array}  \right],$$  $M_{12}(z)A_1(z) = N_{12}(z) $ and $ M_{21}(z) = A_2(z) N_{21}(z).$  Thus we have \beano M(z) &=& \bmatrix{ M_{11}(z) & M_{12}(z) \\ M_{21}(z) &M_{22}(z)} = \bmatrix{ M_{11}(z) & M_{12}(z) \\ A_2(z) N_{21}(z) &M_{22}(z)}, \\ 
N(z) &=& \bmatrix{ N_{11}(z) & N_{12}(z) \\ N_{21}(z) &N_{22}(z)} = \bmatrix{ N_{11}(z) & M_{12}(z)A_1(z) \\ N_{21}(z) &N_{22}(z)}.\eeano Since $M(z)$ and $N(z)$ are unit elements, it follows that $A_2(z)$ and $M_{22}(z)$ are left coprime, and $ N_{22}(z)$ and $A_1(z)$ are right coprime, Indeed, let $$ M(z)^{-1} = \bmatrix{ X_{11}(z) & X_{12}(z) \\ X_{21}(z) &X_{22}(z)} \text{ and }  N(z)^{-1} = \bmatrix{ Y_{11}(z) & Y_{12}(z) \\ Y_{21}(z) &Y_{22}(z)}$$ be conformal partitions. Then 
\beano M(z)M(z)^{-1} & = & I_p \Longrightarrow A_2(z) N_{21}(z) X_{12}(z) + M_{22}(z)X_{22}(z) = I_\ell  \\ N(z)^{-1}N(z) &=& I_p \Longrightarrow Y_{21}(z)M_{12}(z)A_1(z) + Y_{22}(z) N_{22}(z) = I_r.\eeano Hence by Theorem~\ref{cprime2}, $A_2(z)$ and $M_{22}(z)$ are left coprime, and $A_1(z)$ and $N_{22}(z)$ are right coprime. This proves that $ \H_1(z) \sim_{fse} \H_2(z).$  
\end{proof}

\vone  Let $ \mathbf{P}(z)$ be a PMD as in (\ref{pmd}) with state matrix $ A(z)$ and rational transfer function $G(z).$ Then $\deg \det(A(z))$ is called the {\em order} of the PMD $\mathbf{P}(z)$~\cite[p.56]{vardulakis}. Further, $\mathbf{P}(z)$ is said to be 
a PMD of {\em least order} if $\mathbf{P}(z)$ is irreducible~\cite[p.81]{vardulakis}.  Theorem~\ref{equivlnt} shows that the order of a PMD is invariant under Rosenbrock's system equivalence. 
It can be  shown that if $\mathbf{S}(z)$ is a PMD with state matrix $ \widehat{A}(z) \in \C[z]^{\ell\times \ell}$ and the same transfer function $G(z)$ then $ \deg \det(A(z)) \leq \deg \det(\widehat{A}(z))$ (see, Theorem~\ref{lamd}).
 
We define least order of an AMD as follows.  Recall from Definition~\ref{ord} that if $ A \in \mathbb{H}(\Omega)^{n\times n}$ is regular then $\partial_A$ is the order of $A$, that is, $\partial_A$ is the principal divisor of $\det(A(z))$ on $\Omega.$    

\von

\begin{definition} Let  $\mathbf{H} \in \mathbb{H}(\Omega)^{(r+m) \times (r+n)}$ be  an AMD with state matrix $A \in \mathbb{H}(\Omega)^{r\times r}$ and the same transfer function $M \in \mathbb{M}(\Omega)^{m\times n}.$ Then the order of the AMD $\mathbf{H}(z)$ is defined to be the order $\partial_A$ of the state matrix $A(z).$  Further,  $\mathbf{H}(z)$ is said to be an AMD of  least order if $\widehat {\mathbf{H}} \in \mathbb{H}(\Omega)^{(\ell+m) \times (\ell+n)}$ is an AMD with state matrix $ \widehat{A} \in \mathbb{H}(\Omega)^{\ell\times \ell}$ and the transfer function $M(z)$ then $ \partial_A \leq \partial_{\widehat{A}}.$  
\end{definition} 

\von We show that the least order AMD,  irreducible AMD and AMD without decoupling zeros are equivalent concepts. 

Let $ A, B \in \mathbb{H}(\Omega)^{n\times n}$ be regular. Then we have already seen that  $$ A(z) \sim_{\Omega} B(z) \Longrightarrow \partial_A= \partial_B.$$  In particular, if $A, B \in \C[z]^{n\times n}$ then the unit elements in $\C[z]^{n\times n}$ are unimodular matrix polynomials and in such a case $A(z) \sim_{\C} B(z)$ is the unimodular equivalence. Consequently, $ \partial_A = \partial_B \Longrightarrow \deg \det(A(z)) = \deg \det(B(z)).$

 \vone 

\begin{theorem} \label{cor:fse}  Let $\H_1(z)$ and $\H_2(z)$ be AMDs  given by 
	$$ {\small \H_1(z):= \left[
	\begin{array}{c|c}
		A_1(z) & B_1(z) \\
		\hline
		-C_1(z) &   D_1(z)
	\end{array}  \right]_{(r+m) \times (r+n)} \emph{and} \  \  \H_2(z):= \left[
	\begin{array}{c|c}
		A_2(z) & B_2(z) \\
		\hline
		-C_2(z) &   D_2(z)
	\end{array}  \right]_{(\ell+m) \times (\ell+n)}}$$
	with state matrices $A_1 \in \mathbb{H}(\Omega)^{r \times r} \emph{and} \ A_2 \in \mathbb{H}(\Omega)^{\ell \times \ell}$. If $ \H_1(z) \sim_{fse} \H_2(z)$  then the following hold:
	\begin{itemize}  \item[(a)] $\partial_{A_1} = \partial_{A_2}.$ 
		\item[(b)] $ D_1(z)+ C_1(z)A_1(z)^{-1}B_1(z) = D_2(z)+ C_2(z)A_2(z)^{-1}B_2(z).$
		\item[(c)] The non-unit invariant functions in the Smith forms  of $A_1(z)$ and $A_2(z)$ are the same. 
		\item[(d)] The non-unit invariant functions in the Smith forms  of $$\bmatrix{A_1(z) & B_1(z)} \text{ and  } \bmatrix{A_2(z) & B_2(z)}$$ are the same. 
		\item[(e)]  The non-unit invariant functions in the Smith forms  of $ \bmatrix{ A_1(z)\\ C_1(z)} $ and $\bmatrix{ A_2(z)\\ C_2(z)} $ are the same.
		\item[(f)]  The non-unit invariant functions in the Smith forms  of $\H_1(z)$ and $\H_2(z)$ are the same.
	\end{itemize} 
\end{theorem}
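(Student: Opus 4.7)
The plan is to derive (b) and (f) as immediate corollaries of the equivalence $\H_1 \sim_{fse} \H_2 \Leftrightarrow \H_1 \sim_{rse} \H_2$ established in Theorem~\ref{sse:fse}, and to handle (a), (c), (d), (e) by applying a single block-manipulation template to three different sub-equations of the defining FSE identity. Indeed, Theorem~\ref{sse:fse} gives $\H_1 \sim_{rse} \H_2$, which together with Theorem~\ref{th:sse} yields (b). Moreover, the proof of Theorem~\ref{sse:fse} produces unit elements $\mathcal{X}(z), \mathcal{Y}(z)$ satisfying $\mathcal{X}(z)\,(I_\ell \oplus \H_1(z)) = (I_r \oplus \H_2(z))\,\mathcal{Y}(z)$, so $I_\ell \oplus \H_1 \sim_\Omega I_r \oplus \H_2$; since augmentation by an identity block contributes only unit invariant factors, this is exactly (f).

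The key to (a), (c), (d), (e) is that the same Bezout machinery already used in Theorem~\ref{sse:fse} supplies unit elements built from the coprimeness hypotheses. Theorem~\ref{cprime2}(d) provides $\hat X, \hat Y$ with $A_2 \hat X + M \hat Y = I_\ell$ and $\tilde X, \tilde Y$ with $\tilde X N + \tilde Y A_1 = I_r$, and (after the usual adjustment exhibited in the proof of Theorem~\ref{sse:fse}) the block matrix $U_1 := \bmatrix{ A_2 & M \\ -\tilde X & \tilde Y }$ is a unit element of $\mathbb{H}(\Omega)^{(r+\ell) \times (r+\ell)}$. For (c), a direct block computation using $M A_1 = A_2 N$ gives
$$U_1 \cdot (I_\ell \oplus A_1) = (A_2 \oplus I_r) \cdot V_1, \qquad V_1 := \bmatrix{ I_\ell & N \\ -\tilde X & \tilde Y A_1 },$$
and $V_1$ is itself a unit element because adding $\tilde X$ times its first block row to its second reduces it to the upper triangular matrix $\bmatrix{ I_\ell & N \\ 0 & I_r }$. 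Hence $I_\ell \oplus A_1 \sim_\Omega A_2 \oplus I_r$, which yields (c); part (a) is then immediate because the order $\partial$ is additive under $\oplus$ and invariant under $\sim_\Omega$, so $\partial_{A_1} = \partial_{I_\ell \oplus A_1} = \partial_{A_2 \oplus I_r} = \partial_{A_2}$.

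Parts (d) and (e) follow the same template applied to larger blocks. For (d), I apply $U_1$ to $I_\ell \oplus [A_1, B_1]$ and use the full first block row of the FSE identity, $M[A_1, B_1] = [A_2, B_2]\bmatrix{ N & Y \\ 0 & I_n }$, together with column operations built from $N, Y, \tilde X, \tilde Y$, to reach $[A_2, B_2] \oplus I_r$. For (e), a dual (left-right transposed) argument starting from $\bmatrix{ M & 0 \\ X & I_m }\bmatrix{ A_1 \\ -C_1 } = \bmatrix{ A_2 \\ -C_2 } N$ produces $\bmatrix{ A_1 \\ -C_1 } \oplus I_\ell \sim_\Omega I_r \oplus \bmatrix{ A_2 \\ -C_2 }$. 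The main obstacle is purely bookkeeping: keeping the block dimensions straight when $r \neq \ell$ and verifying at each step that the intermediate block matrices really are unit elements of the appropriate holomorphic matrix ring. The substantive ingredient, namely the Bezout-based unit-element factorization, is already isolated in the proof of Theorem~\ref{sse:fse}, so once that is invoked the remaining work reduces to careful block arithmetic.
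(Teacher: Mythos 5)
Your proposal is correct and takes essentially the same route as the paper: both reduce the theorem to the $\mathrm{fse}\Leftrightarrow\mathrm{rse}$ equivalence of Theorem~\ref{sse:fse} and then read off (a)--(f) from the resulting block identity, with (b) coming from Theorem~\ref{th:sse} and the remaining parts from the fact that the transforming block matrices are unit elements, so that padding by identity blocks leaves the non-unit invariant functions unchanged. The only cosmetic difference is that for (a), (c)--(e) you manipulate the fse identity directly using the Bezout-derived units $U_1$, $V_1$, whereas the paper equates the corresponding sub-blocks of the rse identity (\ref{fseq}); since those units are exactly the matrices constructed in the paper's proof of Theorem~\ref{sse:fse} to pass from fse to rse, the two computations coincide, and your version merely supplies details the paper leaves to the reader.
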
 

\vone 
\begin{proof} By Theorem~\ref{sse:fse} we have $ \H_1(z) \sim_{fse} \H_2(z) \Longleftrightarrow \H_1(z)\sim_{rse}\H_2(z).$ Hence equating  (1,1) blocks on both sides in~(\ref{fseq}), we have $$ M(z) (I_{p-r}\oplus A_1(z)) = (I_{p-\ell}\oplus A_2(z)) N(z) \Longrightarrow \det(M(z)) \det(A_1(z)) = \det(N(z)) \det(A_2(z)).$$ Since $\det(M(z))$ and $\det(N(z))$ are unit elements of $\mathbb{H}(\Omega),$ we have $ \partial_{A_1} = \partial_{A_2}.$
  
  \von
  Similarly, the remaining results follow from~(\ref{fseq}) by equating  appropriate blocks and their Smith forms.   
\end{proof}

\vone Let $ M \in \mathbb{M}(\Omega)^{m\times n}.$ Then we have seen that $M(z)$ admits a right (resp., left) MFD of the form  $ M(z) = N_R(z) D_R(z)^{-1}$ (resp., $ M(z) = D_L(z)^{-1}N_L(z) $), where $ N_L, N_R\in \mathbb{H}(\Omega)^{m\times n}$ and,  $ D_L \in \mathbb{H}(\Omega)^{m\times m}$  and $ D_R \in \mathbb{H}(\Omega)^{n\times n}$ are regular. It follows that $M(z)$ is the transfer function of the AMDs given by 
$$ \H_R(z) :=\bmatrix{ D_R(z) & I_n \\ -N_R(z) & 0_{m\times n}}_{(n+m)\times (n+n)} \text{ and }\;  \H_L(z) :=\bmatrix{ D_L(z) & N_L(z)\\ -I_m &  0_{m\times n}}_{(m+m)\times (m+n)}$$ with state matrices $D_R(z)$ and $D_L(z).$ An AMD of the form $\H_R(z)$ (or a block permutation similarity transformation of $\H_R(z)$) is called an AMD in {\em right matrix-fraction form} which we refer to as {\em  RMF-system matrix}.  Similarly, an AMD of the form $\H_L(z)$ (or a block permutation similarity transformation of $\H_L(z)$) is called an AMD in {\em left matrix-fraction form} which we refer to as {\em LMF-system matrix}. 

\vone

\begin{theorem}\label{rmf:eqv} Let $ \H_1, \H_2 \in \mathbb{H}(\Omega)^{(n+m)\times (n+n)}$ be RMF-system matrices given by 
$$\H_1(z) :=\bmatrix{ D_1(z) & I_n \\ -N_1(z) & 0_{m\times n}} \text{ and } \H_2(z) :=\bmatrix{ D_2(z) & I_n \\ -N_2(z) & 0_{m\times n}}.$$ Then $\H_1(z) \sim_{fse} \H_2(z) \Longleftrightarrow$ there exist $T_R \in \mathrm{GL}_n(\mathbb{H}(\Omega))$ such that 
$$ \bmatrix{ D_1(z) & I_n \\ -N_1(z) & 0_{m\times n}} = \bmatrix{ D_2(z) & I_n \\ -N_2(z) & 0_{m\times n}} \bmatrix{ T_R(z) & 0 \\ 0 & I_n}.$$
\end{theorem}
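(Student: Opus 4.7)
The approach is to work directly with the block equation in Definition~\ref{fse}. Since both state matrices $D_1(z)$ and $D_2(z)$ are $n\times n$, the matrices $M,N\in\mathbb{H}(\Omega)^{n\times n}$ are square, and the fse-equality of $\H_1(z)$ and $\H_2(z)$ is
\[
\bmatrix{ M & 0 \\ X & I_m} \bmatrix{ D_1 & I_n \\ -N_1 & 0} = \bmatrix{ D_2 & I_n \\ -N_2 & 0}\bmatrix{ N & Y \\ 0 & I_n},
\]
which, expanded block by block, is equivalent to the four scalar relations
\[
MD_1 = D_2N,\quad M = I_n + D_2Y,\quad XD_1-N_1 = -N_2N,\quad X = -N_2Y,
\]
plus the coprime requirements ``$M,D_2$ left coprime'' and ``$D_1,N$ right coprime.''

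For the $(\Longleftarrow)$ direction, assuming $D_1 = D_2T_R$ and $N_1 = N_2T_R$ with $T_R\in \mathrm{GL}_n(\mathbb{H}(\Omega))$, I would simply take $M=I_n$, $N=T_R$, $X=0$, $Y=0$ in the fse equation. The four block relations are then immediate, and the coprime conditions are automatic since $M=I_n$ is left coprime with everything and $N=T_R$ is invertible, hence right coprime with every $A_1$ (by Proposition~\ref{coprime}, $\rank\bmatrix{D_1(z)\\ T_R(z)}=n$ for every $z$).

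For the $(\Longrightarrow)$ direction, substitute $M=I_n+D_2Y$ and $X=-N_2Y$ into the first and third block equations. The first gives $D_1+D_2YD_1 = D_2N$, hence $D_1 = D_2(N-YD_1)$; the third gives $-N_2YD_1-N_1=-N_2N$, hence $N_1 = N_2(N-YD_1)$. Setting $T := N - YD_1 \in \mathbb{H}(\Omega)^{n\times n}$ yields $D_1 = D_2T$ and $N_1 = N_2T$, which is exactly the required factored form with $T_R=T$.

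The main obstacle is verifying that this $T$ is actually a unit of $\mathbb{H}(\Omega)^{n\times n}$; this is where the right coprimeness of $D_1$ and $N$ is essential. The idea is to factor
\[
\bmatrix{D_1 \\ N} = \bmatrix{D_2T \\ T+YD_1} = \bmatrix{D_2T \\ (I_n+YD_2)T} = \bmatrix{I_n & 0\\ Y & I_n}\bmatrix{D_2 \\ I_n} T =: P\, T,
\]
where $P\in\mathbb{H}(\Omega)^{2n\times n}$ satisfies $\rank P(z)=n$ for every $z\in\Omega$ because it is the product of a unit element and a matrix with an identity block. By Proposition~\ref{coprime}, the right coprimeness of $D_1$ and $N$ gives $\rank\bmatrix{D_1(z)\\ N(z)}=n$ for every $z\in\Omega$; combined with the pointwise full-rank of $P(z)$, the rank estimate $n = \rank(P(z)T(z)) \le \rank T(z) \le n$ forces $T(z)$ to be invertible at every $z\in\Omega$. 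Hence $T\in \mathrm{GL}_n(\mathbb{H}(\Omega))$, completing the proof with $T_R := T$.
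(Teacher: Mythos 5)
Your proof is correct, and for the crucial step --- showing that $T_R$ is a unit --- it takes a genuinely different route from the paper. The forward computation is essentially identical: your $T=N-YD_1$ is exactly the paper's $T_R$, obtained there by right-multiplying the fse identity by $\bmatrix{I_n & 0 \\ -D_1 & I_n}$ and here by direct block substitution. The difference is in proving invertibility. The paper argues via orders of determinants: $D_1=D_2T_R$ gives $\partial_{D_1}=\partial_{D_2}+\partial_{T_R}$, and Theorem~\ref{cor:fse}(a) (which rests on the fse/rse equivalence of Theorem~\ref{sse:fse}) forces $\partial_{T_R}=0$, hence $T_R\in\mathrm{GL}_n(\mathbb{H}(\Omega))$. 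You instead factor $\bmatrix{D_1 \\ N}=\bmatrix{I_n & 0\\ Y & I_n}\bmatrix{D_2 \\ I_n}T$ and use the right coprimeness of $D_1$ and $N$ --- a hypothesis built directly into Definition~\ref{fse} --- together with Proposition~\ref{coprime} and a pointwise rank estimate to conclude $\det T(z)\neq 0$ for every $z$. Your argument is more elementary and self-contained (it never invokes Theorem~\ref{cor:fse} or the rse machinery), and it makes visible exactly where the coprimeness conditions of Fuhrmann equivalence are used; the paper's argument is shorter given the results already established. You also spell out the $(\Longleftarrow)$ direction (taking $M=I_n$, $N=T_R$, $X=0$, $Y=0$ and checking the coprimeness conditions), which the paper dismisses as immediate; that check is correct as well.
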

\vone
\begin{proof} 
 We only need to prove the implication ($\Rightarrow$). 	Suppose that $\H_1(z) \sim_{fse} \H_2(z).$ Then there exist $M,  N, Y \in \mathbb{H}(\Omega)^{n\times n}$ and $ X \in \mathbb{H}(\Omega)^{m\times n}$ such that
\be \label{fse:eqn}\bmatrix{ M(z) & 0_{n\times m} \\ X(z) & I_m} \bmatrix{ D_1(z) & I_n \\ -N_1(z) & 0_{m\times n}} = \bmatrix{ D_2(z) & I_n \\ -N_2(z) & 0_{m\times n}} \bmatrix{ N(z) & Y(z)\\ 0_{n\times n} & I_n} \ee and $M(z), D_2(z)$ are left coprime and $ D_1(z), N(z)$ are right coprime. Multiplying both sides of  (\ref{fse:eqn}) from the right by $\bmatrix{I_n & 0_{n\times n} \\  - D_1(z) & I_n}$ we have 
\be \label{fse:eqn2}\bmatrix{ M(z) & 0_{n\times m} \\ X(z) & I_m} \bmatrix{ 0_{n\times n} & I_n \\ -N_1(z) & 0_{m\times n}} = \bmatrix{ D_2(z) & I_n \\ -N_2(z) & 0_{m\times n}} \bmatrix{ T_R(z) & Y(z)\\ -D_1(z) & I_n}, \ee
where $ T_R(z) := N(z)- Y(z) D_1(z).$ Now, equating (1, 1) and (2, 1) blocks on both sides of (\ref{fse:eqn2}), we have $D_1(z) = D_2(z)T_R(z)$ and  $ N_1(z) = N_2(z) T_R(z).$ This shows that $$ \bmatrix{ D_1(z) & I_n \\ -N_1(z) & 0_{m\times n}} = \bmatrix{ D_2(z) & I_n \\ -N_2(z) & 0_{m\times n}} \bmatrix{ T_R(z) & 0 \\ 0 & I_n}.$$ It remains to show that $ T_R \in \mathrm{GL}_n(\mathbb{H}(\Omega)).$  Since $D_1(z) = D_2(z) T_R(z),$ it follows that $ T_R(z)$ is regular and $\partial_{D_1} = \partial_{D_2}+\partial_{T_R}$, where $\partial_{D_1}, \partial_{D_2}$ and $\partial_{T_R}$, respectively,  are the principal divisors of $D_1(z), D_2(z)$ and $T_R(z)$ on $\Omega.$  By Corollary~\ref{cor:fse},  $ \partial_{D_1} = \partial_{D_2}$. Hence we have   $ \partial_{T_R} = 0.$  In other words, $\det(T_R(z))$ is a unit element of $\mathbb{H}(\Omega)$ which in turn shows that $ T_R \in \mathrm{GL}_n(\mathbb{H}(\Omega)).$ 
\end{proof}

\vone 
By similar arguments, we have the following result. 
\von  
\begin{corollary} Let $ \H_1, \H_2 \in \mathbb{H}(\Omega)^{(m+m)\times (m+n)}$ be LMF-system matrices given by 
	$$\H_1(z) :=\bmatrix{ D_1(z) & N_1(z) \\ -I_m & 0_{m\times n}} \text{ and } \H_2(z) :=\bmatrix{ D_2(z) & N_2(z) \\ -I_m & 0_{m\times n}}.$$ Then $\H_1(z) \sim_{fse} \H_2(z) \Longleftrightarrow$ there exist $T_L \in \mathrm{GL}_m(\mathbb{H}(\Omega))$ such that 
	$$ \bmatrix{ D_1(z) & N_1(z) \\ -I_m & 0_{m\times n}} = \bmatrix{ T_L(z) & 0 \\ 0 & I_m} \bmatrix{ D_2(z) & N_2(z) \\ -I_m & 0_{m\times n}} .$$
	
\end{corollary}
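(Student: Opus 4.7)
The strategy mirrors the proof of Theorem~\ref{rmf:eqv}, but with the key simplifying multiplication carried out from the \emph{left} (since in an LMF-system matrix the identity block sits in the $(2,1)$ position rather than the $(1,2)$ position). The reverse implication is immediate: if $D_1 = T_L D_2$ and $N_1 = T_L N_2$ for some $T_L \in \mathrm{GL}_m(\mathbb{H}(\Omega))$, one easily exhibits matrices $M,N,X,Y$ satisfying Definition~\ref{fse} (for instance, $M := T_L$, $N := T_L^{-1}$, $X := 0$, $Y := 0$), and the two coprimeness conditions hold trivially since $M$ and $N$ are units. So I would concentrate on the forward direction.

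Suppose $\H_1 \sim_{fse} \H_2$. By Definition~\ref{fse} there exist $M, N \in \mathbb{H}(\Omega)^{m\times m}$, $X \in \mathbb{H}(\Omega)^{m\times m}$, $Y \in \mathbb{H}(\Omega)^{m\times n}$ such that
\[
\bmatrix{M & 0_{m\times m} \\ X & I_m} \bmatrix{D_1 & N_1 \\ -I_m & 0_{m\times n}}
= \bmatrix{D_2 & N_2 \\ -I_m & 0_{m\times n}} \bmatrix{N & Y \\ 0_{n\times m} & I_n},
\]
with $M,D_2$ left coprime and $D_1,N$ right coprime. I would expand both sides and equate blocks. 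The $(2,1)$ and $(2,2)$ blocks immediately give $N = I_m - XD_1$ and $Y = -XN_1$, eliminating the two free parameters on the right. Substituting into the $(1,1)$ and $(1,2)$ blocks yields
\[
(M + D_2 X)\,D_1 = D_2, \qquad (M + D_2 X)\,N_1 = N_2.
\]
So define $\widetilde{T}_L := M + D_2 X \in \mathbb{H}(\Omega)^{m\times m}$; then $D_2 = \widetilde{T}_L D_1$ and $N_2 = \widetilde{T}_L N_1$.

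The main (and only) nontrivial step is showing $\widetilde{T}_L \in \mathrm{GL}_m(\mathbb{H}(\Omega))$. Since $D_1$ and $D_2$ are regular (they are state matrices of AMDs), the identity $D_2 = \widetilde{T}_L D_1$ forces $\widetilde{T}_L$ to be regular and
\[
\partial_{D_2} \;=\; \partial_{\widetilde{T}_L} + \partial_{D_1}
\]
as divisors on $\Omega$, using the multiplicative property of $\partial$ noted after Definition~\ref{ord}. By Theorem~\ref{cor:fse}(a), $\partial_{D_1} = \partial_{D_2}$, and hence $\partial_{\widetilde{T}_L} = 0$, which is precisely the condition that $\det \widetilde{T}_L$ is a unit of $\mathbb{H}(\Omega)$, i.e.\ $\widetilde{T}_L$ is invertible in $\mathbb{H}(\Omega)^{m\times m}$.

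Finally, setting $T_L := \widetilde{T}_L^{-1} \in \mathrm{GL}_m(\mathbb{H}(\Omega))$ converts the relations into $D_1 = T_L D_2$ and $N_1 = T_L N_2$, which is exactly the block identity
\[
\bmatrix{D_1 & N_1 \\ -I_m & 0_{m\times n}} = \bmatrix{T_L & 0 \\ 0 & I_m}\bmatrix{D_2 & N_2 \\ -I_m & 0_{m\times n}}.
\]
The hard part, as in Theorem~\ref{rmf:eqv}, is the invertibility of $\widetilde{T}_L$; everything else is bookkeeping on block entries. The left-coprimeness of $M$ with $D_2$ and right-coprimeness of $D_1$ with $N$ are not needed beyond Theorem~\ref{cor:fse}(a), since that theorem already packages those hypotheses into the equality of orders.
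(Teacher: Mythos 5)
Your proof is correct and follows essentially the same route as the paper, which disposes of this corollary with the single line ``by similar arguments'' referring back to Theorem~\ref{rmf:eqv}: eliminate the free parameters by block comparison, identify the candidate multiplier, and force it to be a unit via $\partial_{D_1}=\partial_{D_2}$ from Theorem~\ref{cor:fse}(a). One small slip: in the reverse direction your suggested witness $M:=T_L$, $N:=T_L^{-1}$ does not satisfy the defining identity (the $(2,1)$ blocks give $-I_m=-N$, forcing $N=I_m$); the correct choice is $M:=T_L^{-1}$, $N:=I_m$, $X:=0$, $Y:=0$, after which both coprimeness conditions hold trivially because $M$ and $N$ are units. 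The forward direction, which is the substantive part, is carried out correctly.
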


\vone 
We now show that an AMD, under appropriate assumption, is Fuhrmann system equivalent to an RMF-system matrix.  \von


\vone \begin{theorem} \label{th:eqv1}  Consider the AMD given by  $$\H_1(z) := \bmatrix{ A(z) & B(z) \\ -C(z) & D(z)} \in \mathbb{H}(\Omega)^{(r+m)\times (r+n)} $$ with state matrix $A \in \mathbb{H}(\Omega)^{r\times r}.$  If $ A(z)$ and $B(z)$ are left coprime then there exists an RMF-system matrix  $$  \H_2(z) := \bmatrix{ D_R(z) & I_n \\ -N_R(z) & 0_{m\times n}} \in \mathbb{H}(\Omega)^{(n+m)\times (n+n)}$$ such that 
$\H_1(z) \sim_{fse} \H_2(z).$ Further, if $ A(z)$ and $ C(z)$ are right coprime then $ N_R(z)$ and $D_R(z)$ are right coprime.

\end{theorem}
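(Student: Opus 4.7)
The plan is to construct $\H_2$ from the right coprime matrix-fraction description of $A^{-1}B$, and then to establish $\H_1 \sim_{fse} \H_2$ by reading off the Fuhrmann data from a carefully chosen unit block matrix. Since $A$ is regular, $A^{-1}B \in \mathbb{M}(\Omega)^{r \times n}$, and Proposition~\ref{rmfd} yields a right coprime MFD $A^{-1}B = V W^{-1}$ with $V \in \mathbb{H}(\Omega)^{r \times n}$ and $W \in \mathbb{H}(\Omega)^{n \times n}$ regular; clearing denominators gives the identity $AV = BW$. Setting $D_R := W$ and $N_R := CV + DW$, the matrix $\H_2 := \bmatrix{D_R & I_n \\ -N_R & 0}$ is an RMF-system matrix whose transfer function $N_R D_R^{-1} = (CV + DW) W^{-1} = D + CA^{-1}B$ coincides with that of $\H_1$.

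Next, I would extract Bezout coefficients $P, Q$ with $AP + BQ = I_r$ (from left coprimeness of $A, B$ via the left-coprime analogue of Theorem~\ref{cprime2}) and $E, F$ with $EV + FW = I_n$ (from right coprimeness of $V, W$ via Theorem~\ref{cprime2}), and set $T := FQ - EP$. A pointwise rank argument shows that $\mathcal{U} := \bmatrix{P & V \\ Q & -W} \in \mathrm{GL}_{r+n}(\mathbb{H}(\Omega))$: if $\mathcal{U}(\lambda)\bmatrix{\alpha \\ \beta} = 0$, multiplying the first block row by $A(\lambda)$ and the second by $B(\lambda)$ and using $AV = BW$ with $AP + BQ = I_r$ forces $\alpha = 0$; then $V(\lambda)\beta = 0 = W(\lambda)\beta$ together with right coprimeness of $V, W$ forces $\beta = 0$. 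A direct block computation then confirms $\mathcal{U}^{-1} = \bmatrix{A & B \\ TA + E & TB - F}$, and the four block entries of $\mathcal{U} \mathcal{U}^{-1} = I_{r+n}$ produce the identities $(P + VT)A + VE = I_r$, $(P + VT)B = VF$, $(Q - WT)A = WE$, and $(Q - WT)B + WF = I_n$.

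Setting $M := Q - WT$, $N := E$, $Y := -F$, the third and fourth of those identities are exactly the fse equations $MA = WN$ and $MB = WY + I_n$ from Definition~\ref{fse}. For the two remaining fse equations $XA - C = -N_R N$ and $XB + D = -N_R Y$, I would observe that $\Phi := [C - N_R E,\; -D + N_R F]$ satisfies $\Phi \bmatrix{V \\ -W} = (CV + DW) - N_R(EV + FW) = N_R - N_R = 0$, so $X := CP - DQ + N_R T = \Phi \bmatrix{P \\ Q}$ solves $X[A, B] = \Phi$ holomorphically, which rewrites into precisely those two equations. The Bezout identity $MB + WF = I_n$ then yields left coprimeness of $M$ and $W = D_R$, and $(P + VT)A + VE = I_r$ yields right coprimeness of $A$ and $N = E$, both by Theorem~\ref{cprime2}, completing the fse verification.

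For the further statement, assume $A, C$ are right coprime, and let $x \in \mathbb{C}^n$ satisfy $\bmatrix{N_R \\ W}(\lambda) x = 0$ at some $\lambda \in \Omega$. Setting $v := V(\lambda) x$, the identity $AV = BW$ gives $A(\lambda) v = B(\lambda) W(\lambda) x = 0$, while $N_R = CV + DW$ combined with $W(\lambda) x = 0 = N_R(\lambda) x$ gives $C(\lambda) v = 0$; right coprimeness of $A, C$ forces $v = V(\lambda) x = 0$, and right coprimeness of $V, W$ applied to $\bmatrix{V \\ W}(\lambda) x = 0$ forces $x = 0$, proving $N_R, D_R$ right coprime. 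The main obstacle of the whole proof is guessing the correct ansatz $M = Q - WT$: the equation $MA = WE$ only suggests the meromorphic formula $M = WEA^{-1}$ a priori, and showing this equals the explicit holomorphic $Q - WT$ requires substituting $A^{-1} = P + VW^{-1}Q$ (a rearrangement of $AP + BQ = I_r$) and invoking $EV = I_n - FW$ to cancel the apparent poles coming from $A^{-1}$.
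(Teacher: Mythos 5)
Your proposal is correct; every identity I checked goes through. The four block identities you list are exactly the statement that $\bmatrix{A & B \\ TA+E & TB-F}$ is a two-sided inverse of $\mathcal{U}$ (the computation $\mathcal{U}^{-1}\mathcal{U}=I$ uses only $AP+BQ=I_r$, $AV=BW$, $EV+FW=I_n$ and the definition $T=FQ-EP$), your trick $\Phi\,\mathcal{U}=[X,\;0]$ hence $\Phi=[XA,\;XB]$ cleanly produces the lower fse block rows, and the two Bezout identities you single out certify the coprimeness conditions (a) and (b) of Definition~\ref{fse} via Theorem~\ref{cprime2}(d). The approach is a genuine variant of the paper's, though the two are dual to one another: the paper completes $\bmatrix{A & B}$ to a unit $T$ and reads $M, D_R$ off the second block row of $T^{-1}$, so that the right coprime MFD $A^{-1}B=(-T_2)D_R^{-1}$ and the formula $N_R = C(-T_2)+DD_R$ emerge as by-products (at the cost of a separate kernel argument to show $D_R$ is regular); you instead take the coprime MFD $A^{-1}B=VW^{-1}$ as the primitive object, which makes regularity of $D_R=W$ and the transfer-function identity immediate, but forces you to build the unit $\mathcal{U}$ and the fse certificate $(M,N,X,Y)$ by hand from Bezout coefficients. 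What your route buys is explicitness -- every matrix in the fse relation has a closed formula in terms of $P,Q,E,F,V,W$ -- at the price of having to guess $M=Q-WT$; what the paper's route buys is brevity, since all six matrices appear simultaneously as blocks of one matrix and its inverse. Your proof of the final coprimeness claim (via $v:=V(\lambda)x$ and the pointwise rank criterion of Proposition~\ref{coprime}) is more elementary than the paper's, which invokes the invariance of non-unit invariant factors under fse from Theorem~\ref{cor:fse}; both are valid.
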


\begin{proof}  Since $A(z)$ and $B(z)$ are left coprime, by an analogue of Theorem~\ref{cprime2}(e), there exists $N \in  \mathbb{H}(\Omega)^{n\times r}$ and $ Y \in \mathbb{H}(\Omega)^{n\times n}$ such that $$T(z) := \bmatrix{ A(z) & B(z) \\ -N(z) & -Y(z)} \in \mathrm{GL}_{r+n}(\mathbb{H}(\Omega)).$$ Consider the conformal partition of  $T(z)^{-1}$  given by $$ T(z)^{-1} = \bmatrix{T_1(z) & T_2(z) \\ M(z) & D_R(z)} \in \mathrm{GL}_{r+n}(\mathbb{H}(\Omega)),$$ where  $\text{ with } T_1 \in \mathbb{H}(\Omega)^{r\times r}$ and $  D_R \in \mathbb{H}(\Omega)^{n\times n}.$  Then 
	\be\label{eq3} \bmatrix{T_1(z) & T_2(z) \\ M(z) & D_R(z)}  \bmatrix{ A(z) & B(z) \\ -N(z) & -Y(z)} = T(z)^{-1}T(z) =\bmatrix{ I_r & 0 \\ 0 & I_n}  \ee gives $  M(z)A(z) = D_R(z) N(z) $ and $ M(z) B(z) = D_R(z) Y(z) + I_n$ which can be written together as 
	\be \label{eq1}  M(z) \bmatrix{ A(z) & B(z)} = \bmatrix{ D_R(z) & I_n} \bmatrix{ N(z) & Y(z) \\ 0_{n\times r} & I_n}. \ee Again using the fact that $ T(z) T(z)^{-1} = I_{r+n},$ we have 
$$\bmatrix{ A(z) & B(z) \\ -C(z) & D(z)} \bmatrix{T_1(z) & T_2(z) \\ M(z) & D_R(z)} = \bmatrix{ I_r & 0_{r\times n} \\ - X(z) & N_R(z)} =  \bmatrix{ I_r & 0_{r\times m} \\ - X(z) & I_m}  \bmatrix{ I_r & 0 \\ 0 & N_R(z)},$$ where $ X(z) := C(z) T_1(z) - D(z)M(z)$ and $N_R(z) := D(z)D_R(z) -C(z)T_2(z).$ Hence 
\beano \bmatrix{ I_r & 0_{r\times m} \\ X(z) & I_m}  \bmatrix{ A(z) & B(z) \\ -C(z) & D(z)}  & = &   \bmatrix{ I_r & 0 \\ 0 & N_R(z)} \bmatrix{T_1(z) & T_2(z) \\ M(z) & D_R(z)}^{-1} \\ & = & \bmatrix{ I_r & 0 \\ 0 & N_R(z)}\bmatrix{ A(z) & B(z) \\ -N(z) & -Y(z)}\eeano yields (by equating the last block row) \be \label{eq2} X(z) \bmatrix{A(z) & B(z)} + \bmatrix{-C(z) & D(z)} = -N_R(z) \bmatrix{ N(z) & Y(z)}. \ee Now writing (\ref{eq1}) and (\ref{eq2}) together we have 
$$ \bmatrix{ M(z) & 0_{n\times m} \\ X(z) & I_m}  \bmatrix{ A(z) & B(z) \\ -C(z) & D(z)} = \bmatrix{ D_R(z) & I_n \\ -N_R(z) & 0_{m\times n}} \bmatrix{ N(z) & Y(z) \\ 0_{n\times r} & I_n}.$$
It follows from (\ref{eq3}) that $ A(z)$ and $N(z)$ are right coprime and  $ M(z)$ and $D_R(z)$ are left coprime. It remains to show that $D_R(z)$ is regular. 
Let $ f \in \mathbb{H}(\Omega)^n$ and $D_Rf = 0.$ Now $T(z) T(z)^{-1} = I_{r}\oplus I_n$ gives $A T_2 f + B D_R f = 0 \Longrightarrow AT_2f = 0 \Longrightarrow  T_2f = 0$ since $A(z)$ is regular. This shows that $\bmatrix{ T_2 \\ D_R} f = 0 \Longrightarrow f =0$ since $T(z)^{-1}$ is a unit element. Hence $D_R(z)$ is regular. 
This proves that $ \H_1(z) \sim_{fse} \H_2(z).$ 

\vone 
By Theorem~\ref{cor:fse}, the matrices $ \bmatrix{A(z) \\ -C(z)} $  and $\bmatrix{ D_R(z) \\ -N_R(z)}$ have the identical non-unit invariant functions in their Smith forms. Thus if $ A(z)$ and $ C(z)$ are right coprime then $\bmatrix{ I_r \\ 0_{m\times r}}$ is the Smith form of $ \bmatrix{A(z) \\ -C(z)}.$ Consequently, $\bmatrix{ I_n \\ 0_{m\times n}}$ is the Smith form of  $\bmatrix{ D_R(z) \\ -N_R(z)}$ which in turn shows that $D_R(z)$ and $N_R(z)$ are right coprime. 
 \end{proof}

\vone

We mention that our proof of Theorem~\ref{th:eqv1} is concise and simpler than the proof given for PMDs in \cite[Theorem~2.40]{vardulakis}.

\vone 
We have the following result whose proof is similar to that of Theorem~\ref{th:eqv1}.  

\vone 
\begin{theorem} \label{cor:eqv2}   Consider an AMD given by $$\H_1(z) := \bmatrix{ A(z) & B(z) \\ -C(z) & D(z)} \in \mathbb{H}(\Omega)^{(r+m)\times (r+n)}$$ with state matrix $A \in \mathbb{H}(\Omega)^{r\times r}.$ If $ A(z)$ and $C(z)$ are right coprime then there exists an LMF-system matrix  $$  \H_2(z) := \bmatrix{ D_L(z) & N_L(z) \\ -I_m & 0_{m\times n}} \in \mathbb{H}(\Omega)^{(m+m)\times (m+n)}$$ such that 
	$\H_1(z) \sim_{fse} \H_2(z).$ Further, if $ A(z)$ and $ B(z)$ are left coprime then  $D_L(z)$ and $ N_L(z)$are left coprime.
\end{theorem}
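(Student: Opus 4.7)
The plan is to dualize the proof of Theorem~\ref{th:eqv1} by interchanging the roles of row and column coprimeness. Since $A(z)$ and $C(z)$ are right coprime, by Theorem~\ref{cprime2}(e) I can find $\widehat{N} \in \mathbb{H}(\Omega)^{r\times m}$ and $\widehat{Y} \in \mathbb{H}(\Omega)^{m\times m}$ such that
$$ S(z) := \bmatrix{A(z) & -\widehat{N}(z) \\ -C(z) & -\widehat{Y}(z)} \in \mathrm{GL}_{r+m}(\mathbb{H}(\Omega)). $$
Write the conformal partition $S(z)^{-1} = \bmatrix{S_1(z) & S_2(z) \\ M(z) & D_L(z)}$ with $D_L \in \mathbb{H}(\Omega)^{m\times m}$. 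The bottom row of $S^{-1} S = I$ yields the key relation $M(z) A(z) = D_L(z) C(z)$, while the right column of $S S^{-1} = I$ yields $A(z) S_2(z) = \widehat{N}(z) D_L(z)$.

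Setting $N_L(z) := M(z) B(z) + D_L(z) D(z)$, I then verify directly that
$$ \bmatrix{M(z) & 0 \\ 0 & I_m}\bmatrix{A(z) & B(z) \\ -C(z) & D(z)} = \bmatrix{D_L(z) & N_L(z) \\ -I_m & 0}\bmatrix{C(z) & -D(z) \\ 0 & I_n} $$
block by block: the $(1,1)$ equality is exactly $M A = D_L C$, the $(1,2)$ equality is the definition of $N_L$, and the bottom row matches trivially. The fse coprimeness conditions then reduce to: (i) $A$ and $C$ are right coprime (given), and (ii) $M$ and $D_L$ are left coprime, which holds because $\bmatrix{M & D_L}$ is the lower block of the unit $S^{-1}$ and so admits a right inverse over $\mathbb{H}(\Omega)$, giving left coprimeness by the left-coprime analogue of Theorem~\ref{cprime2}(d). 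Regularity of $D_L$ follows from a short argument: if $D_L g = 0$ for $g \in \mathbb{M}(\Omega)^m$, then $A S_2 = \widehat{N} D_L$ yields $A S_2 g = 0$, so regularity of $A$ forces $S_2 g = 0$, whence $\bmatrix{S_2 \\ D_L} g = 0$; since this column block of the unit $S^{-1}$ has full column rank $m$, we conclude $g = 0$.

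For the second assertion, I invoke Theorem~\ref{cor:fse}(d), according to which fse preserves the non-unit invariant factors in the Smith forms of the top row blocks $\bmatrix{A & B}$ and $\bmatrix{D_L & N_L}$. If $A$ and $B$ are left coprime, the Smith form of $\bmatrix{A & B}$ is $\bmatrix{I_r & 0}$, forcing that of $\bmatrix{D_L & N_L}$ to be $\bmatrix{I_m & 0}$ and hence $D_L$ and $N_L$ to be left coprime. The main obstacle I anticipate is the bookkeeping with the left/right duals of the preliminary coprimeness characterizations and the sign conventions in $S$, so that the block identity above matches the precise form prescribed by Definition~\ref{fse}; once $S$ and $S^{-1}$ are in place, the computational core is straightforward.
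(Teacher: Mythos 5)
Your proposal is correct and is essentially the argument the paper intends: the paper omits the proof of this theorem, stating only that it is ``similar to that of Theorem~\ref{th:eqv1},'' and your dualization (completing $\bigl[\begin{smallmatrix}A\\-C\end{smallmatrix}\bigr]$ to a unit, reading off $MA=D_LC$ and $AS_2=\widehat{N}D_L$ from the two inverse identities, and deducing regularity of $D_L$ and left coprimeness of $M,D_L$ from the blocks of the unit $S^{-1}$) is precisely that omitted dual argument, with the correct use of Theorem~\ref{cor:fse} for the final coprimeness claim.
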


\vone
We are now ready to prove that two irreducible AMDs are Rosenbrock system equivalent (Fuhrmann system equivalent) if and only if they have the same transfer function. \vone

\begin{theorem} \label{eqvsys} Let $\H_1 \in \mathbb{H}(\Omega)^{(r+m)\times(r+n)}$ and $\H_2 \in \mathbb{H}(\Omega)^{(\ell+m)\times \ell+n)}$ be irreducible AMDs   given by 
	$$ \H_1(z):= \left[
	\begin{array}{c|c}
		A_1(z) & B_1(z) \\
		\hline
		-C_1(z) &   D_1(z)
	\end{array}  \right]_{(r+m) \times (r+n)} \emph{and} \  \  \H_2(z):= \left[
	\begin{array}{c|c}
		A_2(z) & B_2(z) \\
		\hline
		-C_2(z) &   D_2(z)
	\end{array}  \right]_{(\ell+m) \times (\ell+n)}$$ with transfer functions $M_i(z) := D_i(z)+C_i(z)A_i(z)^{-1}B_i(z), \; i= 1,2,$ 
	where $A_1 \in \mathbb{H}(\Omega)^{r \times r} \emph{and} \ A_2 \in \mathbb{H}(\Omega)^{\ell \times \ell}$ are  state matrices. Then $$ \H_1(z) \sim_{rse} \H_2(z) \Longleftrightarrow \H_1(z) \sim_{fse} \H_2(z) \Longleftrightarrow M_1(z) = M_2(z).$$
	
\end{theorem}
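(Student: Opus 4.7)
The plan is to chain together the equivalences $\sim_{rse} \Leftrightarrow \sim_{fse}$ from Theorem~\ref{sse:fse} together with Theorems~\ref{th:sse}, \ref{th:eqv1}, \ref{rmf:eqv}, and the uniqueness of right coprime MFDs (Theorem~\ref{eqv:mfd}). First, by Theorem~\ref{sse:fse} we already know $\H_1(z) \sim_{rse} \H_2(z) \Longleftrightarrow \H_1(z) \sim_{fse} \H_2(z)$, so it suffices to establish $\H_1(z) \sim_{fse} \H_2(z) \Longleftrightarrow M_1(z) = M_2(z).$ The implication $\Rightarrow$ is immediate from Theorem~\ref{th:sse} (or from Theorem~\ref{cor:fse}(b)).

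For the nontrivial direction $\Leftarrow$, assume $M_1(z) = M_2(z) =: M(z).$ Since both AMDs are irreducible, for each $i \in \{1,2\}$ the pairs $(A_i, B_i)$ are left coprime and $(A_i, C_i)$ are right coprime. Apply Theorem~\ref{th:eqv1} to each $\H_i$ to obtain an RMF-system matrix
$$\H^R_i(z) := \bmatrix{ D^R_i(z) & I_n \\ -N^R_i(z) & 0_{m\times n}} \in \mathbb{H}(\Omega)^{(n+m)\times(n+n)}$$
with $\H_i(z) \sim_{fse} \H^R_i(z)$ and $N^R_i(z)(D^R_i(z))^{-1}$ a right coprime MFD of $M_i(z) = M(z)$ (right coprimeness follows from the ``further'' part of Theorem~\ref{th:eqv1}, using the right coprimeness of $A_i, C_i$).

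Now both $N^R_1(z)(D^R_1(z))^{-1}$ and $N^R_2(z)(D^R_2(z))^{-1}$ are right coprime MFDs of the same meromorphic matrix $M(z).$ By Theorem~\ref{eqv:mfd}, there exists $U \in \mathrm{GL}_n(\mathbb{H}(\Omega))$ such that $N^R_1(z) = N^R_2(z) U(z)$ and $D^R_1(z) = D^R_2(z) U(z).$ Setting $T_R(z) := U(z) \in \mathrm{GL}_n(\mathbb{H}(\Omega))$ we obtain
$$\bmatrix{ D^R_1(z) & I_n \\ -N^R_1(z) & 0_{m\times n}} = \bmatrix{ D^R_2(z) & I_n \\ -N^R_2(z) & 0_{m\times n}} \bmatrix{ T_R(z) & 0 \\ 0 & I_n},$$
which by Theorem~\ref{rmf:eqv} gives $\H^R_1(z) \sim_{fse} \H^R_2(z).$ Chaining these, we get $\H_1 \sim_{fse} \H^R_1 \sim_{fse} \H^R_2 \sim_{fse} \H_2.$

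The one subtlety, and the main (mild) obstacle, is justifying the transitivity of $\sim_{fse}$ used in this chain: its definition is not obviously transitive since the intertwining matrices $M, N$ need not be square or invertible. I handle this by passing through $\sim_{rse}$: transitivity of $\sim_{rse}$ is straightforward because one may pad with identity blocks to align the auxiliary dimension $p$ in Definition~\ref{sse} and then compose the unit intertwiners and the lower/upper triangular factors. Hence $\sim_{rse}$ is transitive, so by Theorem~\ref{sse:fse} so is $\sim_{fse}$, completing the chain $\H_1(z) \sim_{fse} \H_2(z)$ and therefore $\H_1(z) \sim_{rse} \H_2(z).$
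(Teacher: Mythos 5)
Your proposal is correct and follows essentially the same route as the paper's proof: reduce each irreducible AMD to an RMF-system matrix via Theorem~\ref{th:eqv1}, invoke uniqueness of right coprime MFDs (Theorem~\ref{eqv:mfd}) to relate them by a unit $T_R$, apply Theorem~\ref{rmf:eqv}, and chain the equivalences. Your extra remark justifying transitivity of $\sim_{fse}$ by passing through $\sim_{rse}$ addresses a point the paper leaves implicit, and is a sound way to close that small gap.
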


\begin{proof} We have already seen that $ \H_1(z) \sim_{rse} \H_2(z) \Longleftrightarrow \H_1(z) \sim_{fse} \H_2(z)$ and in such a case, by Theorem~\ref{th:sse} and  Theorem~\ref{cor:fse}(b), we have $M_1(z) = M_2(z).$ 
	
	\vone Conversely, if $ M_1(z) = M_2(z)$ then we now show that  $ \H_1(z) \sim_{fse} \H_2(z)$. By Theorem~\ref{th:eqv1}, there exist RMF-system matrices  $$  \mathbf{S}_i(z) := \bmatrix{ D_i(z) & I_n \\ -N_i(z) & 0_{m\times n}} \in \mathbb{H}(\Omega)^{(n+m)\times (n+n)}$$ with transfer function  $ N_i(z)D_i(z)^{-1}$  such that $ \H_i(z) \sim_{fse} \mathbf{S}_i(z)$ for $ i = 1,2.$ Hence by Theorem~\ref{cor:fse}(b), $M_i(z) = N_i(z)D_i(z)^{-1}$ for $ i=1,2.$  Now $$M_1(z) = M_2(z)  \Longrightarrow N_1(z)D_1(z)^{-1} = N_2(z)D_2(z)^{-1}.$$  Since $\H_i(z)$ is irreducible,  by Theorem~\ref{th:eqv1} the MFD $N_i(z)D_i(z)^{-1}$ is right coprime for $ i=1,2.$  Hence by Theorem~\ref{eqv:mfd}, there exist $ T_R \in \mathrm{GL}_n(\mathbb{H}(\Omega))$ such that $$ N_1(z) = N_2(z) T_R(z) \text{ and } D_1(z) = D_2(z)T_R(z).$$ This shows that $$ \bmatrix{ D_1(z) & I_n \\ -N_1(z) & 0_{m\times n}} = \bmatrix{ D_2(z) & I_n \\ -N_2(z) & 0_{m\times n}} \bmatrix{ T_R(z) & 0 \\ 0 & I_n}.$$ Now, by Theorem~\ref{rmf:eqv}, we have $ \mathbf{S}_1(z) \sim_{fse} \mathbf{S}_2(z).$ Consequently, we have  $$ \H_1(z) \sim_{fse} \mathbf{S}_1(z)\sim_{fse} \mathbf{S}_2(z)\sim_{fse}\H_2(z),$$ that is, $ \H_1(z) \sim_{fse} \H_2(z).$ This completes the proof. 
\end{proof}

\vone We now examine the conditions under which an LMF-system matrix is Fuhrmann system equivalent to an RMF-system matrix. \vone 

\begin{theorem} Consider the LMF-system matrix $\H_L(z) \in \mathbb{H}(\Omega)^{(m+m)\times (m+n)}$ and the RMF-system matrix $\H_R(z) \in \mathbb{H}(\Omega)^{(n+m)\times (n+n)}$ given by 
$$ \H_L(z) := \bmatrix{ D_L(z) & N_L \\ - I_m & 0_{m\times n}} \text{ and } \;  \H_R(z) := \bmatrix{ D_R(z) & I_n \\ -N_R(z) & 0_{m\times n}}.$$	Then $$ \H_L(z) \sim_{fse}\H_R(z) \Longleftrightarrow \left\{ \begin{array}{l} (a) \; D_L(z)^{-1} N_L(z)  = N_R(z)D_R(z)^{-1} \\ (b) \; D_L(z), N_L(z) \text{ are left coprime} \\ (c) \; D_R(z), N_R(z) \text{ are right coprime.} \end{array}  \right. $$

\end{theorem}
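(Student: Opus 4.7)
The plan is to reduce the statement to the characterization of Fuhrmann equivalence of irreducible AMDs in Theorem~\ref{eqvsys}. First I would record the transfer functions: a direct computation using $\H(z) = \bmatrix{A & B \\ -C & D}$ with transfer function $D + C A^{-1} B$ gives $M_L(z) = D_L(z)^{-1} N_L(z)$ and $M_R(z) = N_R(z) D_R(z)^{-1}$. Next I would observe the key simplification: since the ``off-state'' blocks of $\H_L$ and $\H_R$ contain $I_m$ and $I_n$ respectively, one of the two coprimeness conditions in Definition~\ref{amd} is automatic. Explicitly, $\bmatrix{D_L \\ -I_m}$ has rank $m$ everywhere and $\bmatrix{D_R & I_n}$ has rank $n$ everywhere, so $\H_L$ is irreducible iff $D_L, N_L$ are left coprime (condition (b)), while $\H_R$ is irreducible iff $D_R, N_R$ are right coprime (condition (c)).

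For the implication $(\Leftarrow)$, assume (a), (b), (c). Then by the previous observation $\H_L$ and $\H_R$ are both irreducible AMDs; by (a) they share the same transfer function. Theorem~\ref{eqvsys} applied to these two irreducible AMDs yields $\H_L(z) \sim_{fse} \H_R(z)$ immediately.

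For the implication $(\Rightarrow)$, assume $\H_L(z) \sim_{fse} \H_R(z)$. Theorem~\ref{cor:fse}(b) gives $M_L(z) = M_R(z)$, which is exactly (a). To obtain (b), I would apply Theorem~\ref{cor:fse}(d): the matrices $\bmatrix{D_L(z) & N_L(z)}$ and $\bmatrix{D_R(z) & I_n}$ have the same non-unit invariant factors in their Smith forms. But $\bmatrix{D_R(z) & I_n}$ is trivially reducible by column operations to $\bmatrix{0 & I_n}$, so all its invariant functions are units; hence the same is true of $\bmatrix{D_L(z) & N_L(z)}$, which by Theorem~\ref{cprime2} (its left-coprime analogue) is equivalent to $D_L(z), N_L(z)$ being left coprime. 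An entirely symmetric argument using Theorem~\ref{cor:fse}(e) applied to $\bmatrix{D_L(z) \\ -I_m}$ and $\bmatrix{D_R(z) \\ -N_R(z)}$ yields (c).

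The only mild subtlety (and the main point requiring care) is that the state matrices $A_L = D_L$ and $A_R = D_R$ have different sizes ($m$ vs.\ $n$), so the statements of Theorem~\ref{cor:fse}(d)--(e) must be interpreted as equality of non-unit invariant functions rather than equality of Smith forms as matrices; this is exactly the formulation given in that theorem, so no additional work is needed. All other steps are invocations of results already established in the excerpt, so there is no substantive obstacle beyond bookkeeping.
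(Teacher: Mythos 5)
Your proposal is correct and follows essentially the same route as the paper: the backward direction invokes Theorem~\ref{eqvsys} after noting both system matrices are irreducible with equal transfer functions, and the forward direction uses Theorem~\ref{cor:fse} to transfer the trivial coprimeness of $\bmatrix{D_R(z) & I_n}$ and $\bmatrix{D_L(z) \\ -I_m}$ to the other side via equality of non-unit invariant functions. Your explicit remark about the mismatched state-matrix sizes being handled by the ``non-unit invariant functions'' formulation is a point the paper leaves implicit, but it is not a different argument.
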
 

\vone \begin{proof}  First, note that $ D_L(z)^{-1}N_L(z)$ is the transfer function of $\H_L(z)$ and $ N_R(z)D_L(z)^{-1}$ is the transfer function of $\H_R(z).$ Thus, if the conditions (a), (b) and (c)  hold then $\H_L(z)$ and $\H_R(z)$ are irreducible and have the same transfer function. Hence by Theorem~\ref{eqvsys}, we have  $ \H_L(z) \sim_{fse} \H_R(z).$ \vone 

Conversely, suppose that  $ \H_L(z) \sim_{fse} \H_R(z).$ Then by Theorem~\ref{cor:fse}, the non-unit invariant functions in the Smith forms of $\bmatrix{D_L(z) & N_L(z)} $ and $ \bmatrix{ D_R(z) & I_n}$ are identical. Also, the   non-unit invariant functions in the Smith forms of $ \bmatrix{ D_L \\ -I_m}$ and $\bmatrix{ D_R \\ -N_R(z)} $ are identical.  Since $ D_R(z) $ and $I_n$ are left coprime, the invariant functions in the Smith form  of $ \bmatrix{ D_R(z) & I_n}$ are all unit elements. This implies that the invariant functions in the Smith form of $\bmatrix{D_L(z) & N_L(z)} $ are all unit elements. Hence $D_L(z) $ and $N_L(z)$ are left coprime. Similarly $ D_L(z)$ and $I_m$ are right coprime which implies that $ D_R(z) $ and $ N_R(z)$ are right coprime. Again, by Theorem~\ref{cor:fse}, we have $ D_L^{-1}N_L(z) = N_R(z) D_R(z)^{-1}.$  
 \end{proof} 
 
 \vone 
 We now examine relationships between canonical forms of AMD, state matrix and the transfer function. \vone 
 
 \begin{theorem} \label{th:main}
 	 Consider an irreducible  AMD given by  $$\H(z) := \bmatrix{ A(z) & B(z) \\ -C(z) & D(z)} \in \mathbb{H}(\Omega)^{(r+m)\times (r+n)} $$ with transfer function $M(z) := D(z) + C(z)A(z)^{-1}B(z) \in \mathbb{H}(\Omega)^{m\times n}$ and state matrix $A \in \mathbb{H}(\Omega)^{r\times r}$. Suppose that $\nrank(M) = p.$ Let $$ \Sigma_{M}(z) := \phi_1(z)/\psi_1(z) \oplus \cdots \oplus \phi_p(z)/\psi_p(z) \oplus 0_{m-p, n-p}$$ be the Smith-McMillan form of $M(z).$  Then 
 	 \beano S_A(z) &:=& I_{r-p} \oplus \diag(\psi_p(z), \psi_{p-1}(z), \psi_1(z)) \\ S_{\H}(z) &:= & I_r \oplus \diag(\phi_1(z), \ldots, \phi_p(z)) \oplus 0_{m-p, n-p} \eeano are the Smith forms of $A(z)$ and $\H(z),$ respectively.
 	 	 In particular, we have 	\begin{itemize} \item[(a)] $\sig_{\Omega}(M) = \sig_{\Omega}(\H) $ and $ \mathrm{Ind}_e(\lam, M) =\mathrm{Ind}_e(\lam, \H) $ for all $\lam \in \sig_{\Omega}(M).$  \item[(b)]  $ \wp_{\Omega}(M) = \sig_{\Omega}(A)$ and $ \mathrm{Ind}_p(\lam, M) = \mathrm{Ind}_e(\lam, A)$ for all $ \lam \in \wp_{\Omega}(M).$ \end{itemize} 
 \end{theorem}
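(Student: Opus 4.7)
My plan is to reduce everything to the RMF-system matrix case via Fuhrmann system equivalence, then read off the Smith forms of $A(z)$ and $\H(z)$ from the already-known Smith form of a right coprime MFD of $M(z)$.

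First, since $\H(z)$ is irreducible, Theorem~\ref{th:eqv1} provides an RMF-system matrix
$$\H_R(z) = \bmatrix{D_R(z) & I_n \\ -N_R(z) & 0_{m\times n}} \in \mathbb{H}(\Omega)^{(n+m)\times (n+n)}$$
with $M(z) = N_R(z)D_R(z)^{-1}$ a right coprime MFD and $\H(z) \sim_{fse} \H_R(z)$. Applying Theorem~\ref{rcmfd} to this MFD and the given Smith--McMillan form $\Sigma_M(z)$ yields the Smith forms $S_{N_R}(z) = \diag(\phi_1,\ldots,\phi_p) \oplus 0_{m-p,n-p}$ and $S_{D_R}(z) = I_{n-p} \oplus \diag(\psi_p,\psi_{p-1},\ldots,\psi_1)$.

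Next, I would compute the Smith form of $\H_R(z)$ by a single block column operation. Right-multiplication by the unimodular matrix $\bigl[\begin{smallmatrix} 0 & I_n \\ I_n & -D_R(z)\end{smallmatrix}\bigr]$ (which has determinant $\pm 1$) gives
$$\H_R(z)\,\bmatrix{0 & I_n \\ I_n & -D_R(z)} = \bmatrix{I_n & 0 \\ 0 & -N_R(z)},$$
so $\H_R(z)$ is equivalent on $\Omega$ to $I_n \oplus N_R(z)$, hence $S_{\H_R}(z) = I_n \oplus \diag(\phi_1,\ldots,\phi_p) \oplus 0_{m-p,n-p}$. The Schur complement factorization $\H(z) = \bigl[\begin{smallmatrix} I_r & 0 \\ -C A^{-1} & I_m\end{smallmatrix}\bigr]\bigl[\begin{smallmatrix} A & B \\ 0 & M\end{smallmatrix}\bigr]$ over $\mathbb{M}(\Omega)$ gives $\nrank(\H) = \nrank(A) + \nrank(M) = r + p$.

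Now I apply Theorem~\ref{cor:fse} to transfer structural data from $\H_R$ to $\H$. Part (c) says $A(z)$ and $D_R(z)$ share the same non-unit invariant functions; since $A(z)$ is regular of size $r$, its Smith form has exactly $r$ nonzero diagonal entries, of which the non-units must be $\psi_p,\ldots,\psi_1$, forcing $S_A(z) = I_{r-p} \oplus \diag(\psi_p,\psi_{p-1},\ldots,\psi_1)$. Part (f) says $\H(z)$ and $\H_R(z)$ share non-unit invariant functions; since $\nrank(\H) = r + p$, its Smith form has $r + p$ nonzero diagonal entries whose non-units are exactly $\phi_1,\ldots,\phi_p$, yielding $S_\H(z) = I_r \oplus \diag(\phi_1,\ldots,\phi_p) \oplus 0_{m-p,n-p}$.

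Finally, conclusions (a) and (b) are immediate consequences of the computed Smith forms: reading off supports of the principal divisors of $\phi_j$ and $\psi_j$ gives $\sig_\Omega(\H) = \sig_\Omega(M)$ and $\sig_\Omega(A) = \wp_\Omega(M)$, while the partial multiplicity tuples (ignoring the trivial leading zeros that arise purely from the rank difference) match by construction. The only delicate point I anticipate is the normal-rank computation for $\H$, which strictly speaking requires passing to the meromorphic Schur complement; this is the sort of step where one must be careful that $A(z)^{-1}$ exists as a meromorphic matrix even though it is not holomorphic, but regularity of $A(z)$ makes this routine.
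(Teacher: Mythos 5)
Your proposal is correct and follows essentially the same route as the paper's proof: reduce to an RMF-system matrix via Theorem~\ref{th:eqv1}, read off the Smith forms of $N_R(z)$ and $D_R(z)$ from Theorem~\ref{rcmfd}, use the same block column operation to show $\H_R(z)\sim_{\Omega} I_n\oplus N_R(z)$, and transfer the non-unit invariant functions back to $A(z)$ and $\H(z)$ via Theorem~\ref{cor:fse}(c) and (f). Your explicit normal-rank check $\nrank(\H)=r+p$ is a small extra verification the paper leaves implicit, but it does not change the argument.
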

 
 \begin{proof} Since $\H(z)$ is irreducible, by Theorem~\ref{th:eqv1} there exists an irreducible RMF-system matrix (with $D_R(z)$ and $ N_R(z)$ are right coprime) $$ \mathbf{S}(z) :=  \bmatrix{ D_R(z) & I_n \\ -N_R(z) & 0_{m\times n}} \in \mathbb{H}(\Omega)^{(n+m)\times (n+n)}$$ such that 
 	$ \H(z) \sim_{fse} \mathbf{S}(z).$ By Theorem~\ref{cor:fse},  $ M(z) = N_R(z) D_R(z)^{-1}$ is a right coprime MFD. Now by Theorem~\ref{rcmfd}, $ S_{D_R}(z) :=  I_{n-p} \oplus \diag(\psi_p(z), \psi_{p-1}(z), \psi_1(z))$ is the Smith form of $D_R(z)$ and $ S_{N_R}(z) := \diag(\phi_1(z), \ldots, \phi_p(z)) \oplus 0_{m-p, n-p} $ is the Smith form of $N_R(z).$ 
 	
 	\vone 
 	
 	By Theorem~\ref{cor:fse}, the non-unit invariant functions in the Smith forms of $A(z)$ and $D_R(z)$ are the same. Hence  $S_A(z) = I_{r-p} \oplus \diag(\psi_p(z), \psi_{p-1}(z), \psi_1(z))$ is the Smith form of $A(z).$  Next, we have 
 	$$  \bmatrix{ D_R(z) & I_n \\ -N_R(z) & 0_{m\times n}}  \bmatrix{ 0_{n\times n} & -I_n \\ I_n & D_R(z)} = \bmatrix{ I_n & 0_{n\times n} \\ 0_{m\times n} & N_R(z)} = : \H_1(z),$$ where the second matrix on the left hand side is a unit element. This shows that $\mathbf{S}(z)$ and $\H_1(z)$ have the same Smith form and $ S_{\H_1}(z) = I_n \oplus  \diag(\phi_1(z), \ldots, \phi_p(z)) \oplus 0_{m-p, n-p}$ is the Smith form of $ \H_1(z).$ By Theorem~\ref{cor:fse}, the non-unit invariant functions in the Smith forms of $\H(z)$ and $\mathbf{S}(z)$ are the same. Consequently, $$S_{\H}(z) = I_r \oplus  \diag(\phi_1(z), \ldots, \phi_p(z)) \oplus 0_{m-p, n-p}$$ is the Smith form of $\H(z).$ The proofs of the remaining results are immediate.   	
 \end{proof}

\vone 
Finally, we have the following result that establishes equivalence between an irreducible AMD  and  an irreducible RMF system matrix. Obviously, similar result holds for an irreducible LMF-system matrix. \vone 

\begin{theorem} \label{th:main2} Let $M \in \mathbb{M}(\Omega)^{m\times n}.$ Let $ M(z) = N_R(z)D_R(z)^{-1}$ be a right coprime MFD, where $D_R(z)$ is regular and $ (N_R, D_R) \in \mathbb{H}(\Omega)^{m\times n} \times \mathbb{H}(\Omega)^{n\times n}$. Let 
	$$ M(z) = D(z)+C(z)A(z)^{-1}B(z)$$ be a minimal realization,  where $(C, A, B, D) \in \mathbb{H}(\Omega)^{m\times r} \times \mathbb{H}(\Omega)^{r\times r} \times \mathbb{H}(\Omega)^{r\times n} \times \mathbb{H}(\Omega)^{m\times n}$ and   $A(z)$ is regular. Consider the  AMD $\H(z)$ and the RMF-system matrix $\mathbf{S}(z)$ given by  	
	$$\H(z) := \bmatrix{ A(z) & B(z) \\ -C(z) & D(z)} \text{ and } \mathbf{S}(z) :=  \bmatrix{ D_R(z) & I_n \\ -N_R(z) & 0_{m\times n}}. $$ 	Let $ p \geq \max(r, n).$  Then the following hold: 
	\begin{itemize}
		\item[(a)] $ \H(z)\sim_{fse} \mathbf{S}(z)$ or equivalently $ \H(z)\sim_{rse} \mathbf{S}(z)$. 
		\item[(b)] $I_{p-r} \oplus \H(z) \sim_{rse} I_{p-n}\oplus \mathbf{S}(z) \sim_{\Omega} I_{p} \oplus N_R(z).$
		\item[(c)]  $ I_{p-r}\oplus A(z) \sim_{\Omega} I_{p-n} \oplus D_R(z)$. 
	\end{itemize}	
 Let $ S_{\H}(z), S_{\mathbf{S}}(z)$ and $S_{N_R}(z)$ be the Smith forms of $\H(z), \mathbf{S}(z)$ and $ \N_R(z),$ respectively. Then we have  $ I_{p-r}\oplus S_A(z) = I_{p-n} \oplus S_{D_R}(z) $ and $$ I_{p-r} \oplus S_{\H}(z) = I_{p-n}\oplus S_{\mathbf{S}}(z) = I_{p} \oplus S_{N_R}(z).$$

\noindent In particular, $\sig_{\Omega}(M) = \sig_{\Omega}(\H) = \sig_{\Omega}(\mathbf{S})= \sig_{\Omega}(N_R) $ and $ \wp_{\Omega}(M) = \sig_{\Omega}(A) = \sig_{\Omega}(D_R).$ Further, $ \mathrm{Ind}_e(\lam, M) = \mathrm{Ind}_e(\lam, \H)  = \mathrm{Ind}_e(\lam, \mathbf{S}) = \mathrm{Ind}_e(\lam, N_R)$ for $ \lam \in \sig_{\Omega}(M)$ and $ \mathrm{Ind}_p(\lam, M)= \mathrm{Ind}_e(\lam, A) = \mathrm{Ind}_e(\lam, D_R)$ for $ \lam \in \wp_{\Omega}(M).$
\end{theorem}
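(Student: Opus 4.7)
The plan is to assemble Theorem~\ref{th:main2} from three ingredients already available: Theorem~\ref{eqvsys}, which characterizes equivalence of irreducible AMDs by their transfer function; Theorem~\ref{cor:fse}, which carries Smith-form data across $\sim_{rse}$; and one explicit elementary computation collapsing an RMF-system matrix onto its numerator.

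For part (a), I would first note that minimality of $(C,A,B,D)$ is by definition the irreducibility of $\H(z)$, while right coprimeness of $(N_R,D_R)$ is exactly the irreducibility of the RMF-system matrix $\mathbf{S}(z)$ (its state matrix is $D_R$, and $\bmatrix{D_R(z) & I_n}$ is trivially left coprime thanks to the $I_n$ block). Since $M(z) = N_R(z)D_R(z)^{-1} = D(z)+C(z)A(z)^{-1}B(z)$, both irreducible AMDs share the same transfer function, so Theorem~\ref{eqvsys} delivers $\H(z)\sim_{fse}\mathbf{S}(z) \Longleftrightarrow \H(z)\sim_{rse}\mathbf{S}(z)$.

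For (b), the relation $I_{p-r}\oplus\H(z) \sim_{rse} I_{p-n}\oplus\mathbf{S}(z)$ is literally the content of Definition~\ref{sse} applied to (a); the two bordering matrices appearing there are unit elements, so one also has $I_{p-r}\oplus\H(z) \sim_{\Omega} I_{p-n}\oplus\mathbf{S}(z)$. For the second equivalence I would introduce the explicit unit
$$ U(z) := \bmatrix{ 0_{n\times n} & -I_n \\ I_n & D_R(z)} \in \mathrm{GL}_{2n}(\mathbb{H}(\Omega)) $$
and verify directly that $\mathbf{S}(z)\,U(z) = I_n \oplus N_R(z)$; right multiplication by the unit $I_{p-n}\oplus U(z)$ then yields $I_{p-n}\oplus\mathbf{S}(z) \sim_{\Omega} I_p \oplus N_R(z)$. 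Part (c) is immediate from the $(1,1)$ block of the rse identity in Definition~\ref{sse}, which reads $M(z)(I_{p-r}\oplus A(z)) = (I_{p-n}\oplus D_R(z))N(z)$ with $M, N$ unit elements, i.e.\ exactly $I_{p-r}\oplus A(z) \sim_{\Omega} I_{p-n}\oplus D_R(z)$.

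The displayed Smith-form identifications then follow: Theorem~\ref{th:main} applied to the irreducible AMDs $\H(z)$ and $\mathbf{S}(z)$, together with Theorem~\ref{rcmfd} applied to the right coprime MFD $M = N_R D_R^{-1}$, gives all four Smith forms explicitly in terms of the Smith--McMillan invariants $\phi_j/\psi_j$ of $M$, and a direct comparison shows that after the indicated identity paddings they coincide as diagonal matrices (the block of leading ones grows exactly enough to absorb the differing numbers $r-q$ and $n-q$ of unit invariant factors in $S_A$ and $S_{D_R}$, where $q = \nrank(M)$). The spectral equalities $\sig_{\Omega}(M) = \sig_{\Omega}(\H) = \sig_{\Omega}(\mathbf{S}) = \sig_{\Omega}(N_R)$ and $\wp_{\Omega}(M) = \sig_{\Omega}(A) = \sig_{\Omega}(D_R)$, and the corresponding index equalities, then drop out by reading off supports and values of the underlying principal divisors, combined with $\sig_{\Omega}(M)=\sig_{\Omega}(\H)$ and $\wp_{\Omega}(M)=\sig_{\Omega}(A)$ from Theorem~\ref{th:main}. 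There is no genuine obstacle; the only step beyond bookkeeping is writing down the unit $U(z)$ that cancels $D_R$ out of $\mathbf{S}$, which is a one-line verification.
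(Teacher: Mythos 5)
Your proposal is correct and follows essentially the same route as the paper: part (a) via Theorem~\ref{eqvsys} applied to the two irreducible AMDs with common transfer function, part (b) via the rse identity of Definition~\ref{sse} together with right-multiplication by the unit $\bmatrix{0 & -I_n \\ I_n & D_R(z)}$ that collapses $\mathbf{S}(z)$ onto $I_n\oplus N_R(z)$, part (c) by equating $(1,1)$ blocks, and the Smith-form and spectral conclusions by citing Theorems~\ref{th:main} and~\ref{rcmfd}. The only differences are cosmetic (you spell out the bookkeeping of unit invariant factors that the paper leaves implicit).
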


\vone \begin{proof} Since $\H(z)$ and $\mathbf{S}(z)$ are irreducible AMDs having the same transfer function, by Theorem~\ref{eqvsys}, we have $ \H(z)\sim_{fse} \mathbf{S}(z).$ By Theorem~\ref{sse:fse}, we have $ \H(z)\sim_{rse}\mathbf{S}(z).$ 
 Since	 $\H(z)\sim_{rse} \mathbf{S}(z)$, there exist  $M, N \in \mathrm{GL}_p(\mathbb{H}(\Omega)),$ $X \in \mathbb{H}(\Omega)^{m \times p}$ and $Y \in \mathbb{H}(\Omega)^{p \times n}$  such that 
	\be \label{eq:sse2}\left[
	\begin{array}{c|c}
		M(z)
		& 0 \\
		\hline
		X(z) &  I_m 
	\end{array}  \right] \left[
	\begin{array}{c|c}
		I_{p-r} & 0 \\
		\hline
		0&  \H(z)
	\end{array}  \right]\left[
	\begin{array}{c|c}
		N(z) & Y(z)\\
		\hline
		0 &  I_n
	\end{array}  \right] =  \left[
	\begin{array}{c|c}
		I_{p-n}  & 0 \\
		\hline
		0& \mathbf{S}(z)
	\end{array}  \right].\ee This shows that  $I_{p-r} \oplus \H(z) \sim_{rse} I_{p-n}\oplus \mathbf{S}(z).$  Now elementary block column operations show that  
	$$  \mathbf{S}(z) :=  \bmatrix{ D_R(z) & I_n \\ -N_R(z) & 0_{m\times n}} \sim_{\Omega} \bmatrix{ I_n  & 0 \\ 0 & N_R(z)} \Longrightarrow I_{p-n}\oplus \mathbf{S}(z) \sim_{\Omega} I_{p} \oplus N_R(z). $$ Now equating (1, 1) block on both sides of (\ref{eq:sse2}), we have $$M(z) (I_{p-r}\oplus A(z)) N(z) = I_{p-n} \oplus D_R(z) \Longrightarrow  I_{p-r}\oplus A(z) \sim_{\Omega} I_{p-n} \oplus D_R(z).$$   The proofs of remaining assertions follow immediately from Theorem~\ref{rcmfd} and Theorem~\ref{th:main}.   
	\end{proof}

\vone

 The next results characterizes least order AMDs. It shows that the least order and irreduciblity of an AMD and least order of the associated transfer function are equivalent concepts. This result may be new for PMDs as well.  \vone 

\begin{theorem}[least order AMD] \label{lamd} Consider an AMD given by 	$$\H(z) := \bmatrix{ A(z) & B(z) \\ -C(z) & D(z)} \in \mathbb{H}(\Omega)^{(r+m)\times (r+n)}$$ with transfer function $M(z) := D(z) + C(z) (A(z))^{-1} B(z).$  Then the following statements are equivalent. 
	\begin{itemize}
		\item[(a)] The AMD $\mathbf{H}(z)$ is of least order. 
		\item[(b)] The order $\partial_A$ of the AMD $\mathbf{H}(z)$ = the least order $\nu(M)$ of $M(z).$ 
		\item[(c)] The AMD is irreducible. 
	\end{itemize}
	
\end{theorem}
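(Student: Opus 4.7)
I plan to establish the cycle (c)$\Rightarrow$(b)$\Rightarrow$(a)$\Rightarrow$(c). The implication (c)$\Rightarrow$(b) is immediate from Theorem~\ref{th:main2}(c): if $\H$ is irreducible and $M(z) = N_R(z)D_R(z)^{-1}$ is any right coprime MFD, then $I_{p-r}\oplus A(z)\sim_{\Omega} I_{p-n}\oplus D_R(z)$, hence $\partial_A = \partial_{D_R} = \nu(M)$.

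The heart of the argument is the lower bound $\nu(M)\leq\partial_{\widehat{A}}$ for every AMD $\widehat{\H}$ with transfer function $M$. To establish it, I first reduce to the case where $\widehat{A}$ and $\widehat{B}$ are left coprime. By the left analogue of Theorem~\ref{gcrd} (symmetric construction from the global Smith form), take a gcld $L$ of $\widehat{A}$ and $\widehat{B}$ and write $\widehat{A} = LA^\ast$, $\widehat{B} = LB^\ast$ with $A^\ast, B^\ast$ left coprime. A direct computation yields $\widehat{C}\widehat{A}^{-1}\widehat{B} = \widehat{C}(A^\ast)^{-1}B^\ast$, so the AMD $\H^\ast := \bmatrix{A^\ast & B^\ast \\ -\widehat{C} & \widehat{D}}$ still has transfer function $M$, and $\partial_{A^\ast} = \partial_{\widehat{A}} - \partial_L \leq \partial_{\widehat{A}}$. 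Since $A^\ast, B^\ast$ are now left coprime, Theorem~\ref{th:eqv1} produces an RMF-system matrix $\mathbf{S}(z) = \bmatrix{D_R & I_n \\ -N_R & 0}$ with $\H^\ast \sim_{fse} \mathbf{S}$; Theorem~\ref{th:sse} then forces $M = N_R D_R^{-1}$ (as a right, not necessarily coprime, MFD). Theorem~\ref{cor:fse}(a) gives $\partial_{A^\ast} = \partial_{D_R}$, and Theorem~\ref{lord}(c) gives $\nu(M) \leq \partial_{D_R}$. Chaining these inequalities yields the desired bound.

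With the lower bound in hand, (b)$\Rightarrow$(a) is immediate: then $\partial_A = \nu(M) \leq \partial_{\widehat{A}}$ for every competing AMD. For (a)$\Rightarrow$(c) I argue by contrapositive. If $\H$ is not irreducible, then either $A, B$ admit a non-unit gcld $L$ or $A, C$ admit a non-unit gcrd $R$. In the first case, the reduction above produces an AMD with the same transfer function and state matrix of order $\partial_A - \partial_L < \partial_A$. In the second case, writing $A = A'R$, $C = C'R$ gives the AMD $\H' = \bmatrix{A' & B \\ -C' & D}$ with the same transfer function (since $CA^{-1}B = C'R(A'R)^{-1}B = C'(A')^{-1}B$) and state matrix of order $\partial_A - \partial_R < \partial_A$. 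Either case contradicts least order of $\H$. The main technical point I foresee is ensuring that the gcld/gcrd factorizations are globally holomorphic on $\Omega$, not merely locally; this will be handled by extracting the factors from the global Smith decomposition on $\Omega$ (Theorem~\ref{hsmith}), exactly as in the proof of Theorem~\ref{gcrd}, so that $A^\ast, B^\ast, A', C'$ lie in the appropriate $\mathbb{H}(\Omega)$-matrix spaces.
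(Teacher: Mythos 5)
Your proposal is correct, and its decisive step takes a genuinely different route from the paper. The paper runs the cycle (a)$\Rightarrow$(b)$\Rightarrow$(c)$\Rightarrow$(a), and both its (c)$\Rightarrow$(a) and its (b)$\Rightarrow$(c) rest on the local assertion that for \emph{any} realization $M=D_1+C_1A_1^{-1}B_1$ the order of $\lambda$ as a zero of $\det(A_1)$ dominates the \emph{total} pole multiplicity $\partial_A(\lambda)=\nu(M)(\lambda)$ of $M$ at $\lambda$ (not merely the largest partial multiplicity); the paper states this but does not derive it. You instead obtain the universal bound $\nu(M)\le\partial_{\widehat{A}}$ by stripping off a greatest common left divisor of $\widehat{A},\widehat{B}$ (which preserves the transfer function and can only lower the order), then chaining Theorem~\ref{th:eqv1} (passage to an fse-equivalent RMF-system matrix), Theorem~\ref{cor:fse}(a) (equality of orders under fse), and Theorem~\ref{lord}(c) (the least order is minimal over right MFDs). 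This routes all the multiplicity bookkeeping through results already proved and is, in that sense, more self-contained; the paper's argument is shorter but leans on the unproved local degree bound. Your (c)$\Rightarrow$(b) via Theorem~\ref{th:main2}(c) is fine, and your contrapositive (a)$\Rightarrow$(c) is essentially the paper's decoupling-zeros reduction stated immediately after the theorem. The only points worth making explicit are the ones you already flag plus two small regularity checks: the left analogue of Theorem~\ref{gcrd} applies because $\bmatrix{\widehat{A} & \widehat{B}}$ has normal rank $r$ (as $\widehat{A}$ is regular), and regularity of $\widehat{A}=LA^{\ast}$ (resp.\ $A=A'R$) forces $A^{\ast}$ (resp.\ $A'$) to be regular, so the reduced quadruples are again AMDs and a non-unit $L$ or $R$ has strictly positive order, giving the strict drop $\partial_{A^{\ast}}\lneq\partial_A$ needed to contradict least order.
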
 

\begin{proof} Suppose that (a) holds. Let $ M(z) = N_R(z)D_R(z)^{-1}$ be a right coprime MFD. Then $\nu(M) = \partial_D$ is the least order of $M(z).$ Now, consider the RMF system matrix $$  \mathbf{S}(z) :=  \bmatrix{ D_R(z) & I_n \\ -N_R(z) & 0_{m\times n}}.$$ Since $ \mathbf{S}(z)$ is an AMD with state matrix $D_R(z)$ and the transfer function $M(z)$, we have $ \partial_A \leq \partial_{D_R}$ and  $ \supp(\partial_A) \subset \supp(\partial_{D_R}).$ By Theorem~\ref{rcmfd}, we have  $ \supp(\partial_{D_R})  = \wp_{\Omega}(M) \subset \sig_{\Omega}(A) = \supp(\partial_A)$ and $\partial_{D_R} \leq \partial_{A}$ which shows that $ \supp(\partial_A) = \supp(\partial_{D_R})$ and $ \partial_A = \partial_{D_R} = \nu(M).$ This proves (b).

	Next, suppose that $\partial_A = \nu(M).$ Then $ \wp_{\Omega}(M) = \supp(\partial_A)= \sig_{\Omega}(A)$ and $ \partial_A(\lam)$ is the multiplicity of the pole $\lam$ of $M(z).$  We show that $A(z), C(z)$ are right coprime and $A(z), B(z)$ are left coprime. Let $Q(z)$ be a gcrd of $A(z)$ and $C(z).$ Then $ A(z) = A_R(z) Q(z)$ and $ C(z) = C_R(z)Q(z)$, where $ A_R(z)$ and $C_R(z)$ are right coprime.  Now $ M(z) = D(z) + C(z) (A(z))^{-1} B(z) = D(z) + C_R(z)( A_R(z))^{-1}B(z)$ shows that $\sig_{\Omega}(A) = \wp_{\Omega}(M) \subset \sig_{\Omega}(A_R)$ and $ \partial_{A_R}(\lam)  \geq \partial_A(\lam)$ for $\lam \in \wp_{\Omega}(M)$. 
	Since $ A(z) = A_R(z) Q(z)$, we have $\sig_{\Omega}(A_R) \subset \sig_{\Omega}(A)$ and $ \partial_A = \partial_{A_R} + \partial_Q$ which implies that $ \partial_A \geq \partial_{A_R}$. This shows that $ \partial_A = \partial_{A_R}$. Consequently, $\partial_Q =0$ which implies that $Q(z)$ is a unit element, that is, $Q \in \mathrm{GL}_n(\mathbb{H}(\Omega)).$  Hence $A(z)$ and $C(z)$ are right coprime.   
	A similar proof shows that $ A(z)$ and $B(z)$ are left coprime. Consequently the AMD $\mathbf{H}(z)$ is irreducible. This proves (c).
	
Finally, assume that (c) holds. By Theorem~\ref{th:main}, $ \wp_{\Omega}(M) = \sig_{\Omega}(A) = \supp(\partial_A)$ and $\partial_A(\lam)$ is the multiplicity of the pole $\lam$ of $M(z).$ Let 	$$\H_1(z) := \bmatrix{ A_1(z) & B_1(z) \\ -C_1(z) & D_1(z)} \in \mathbb{H}(\Omega)^{(\ell+m)\times (\ell+n)}$$ be an AMD with state matrix $A_1(z)$ and the  transfer function $M(z)$. Then  we have $M(z) = D_1(z) + C_1(z) (A_1(z))^{-1} B_1(z)$ which shows that  $\sig_{\Omega}(A) = \wp_{\Omega}(M) \subset \sig_{\Omega}(A_1)$. Further, if $\lam $ is a pole of $M(z)$ then the multiplicity of $\lam$ as a zero of  $ \det(A_1(z))$ is at least $\partial_A(\lam).$ This shows that $ \partial_A  \leq \partial_{A_1}$. Hence $\mathbf{H}(z)$ is an AMD of least order. This proves (a).    
\end{proof} 

\vone 

{\bf Decoupling zeros of AMD:} As in the case of PMDs, we now show that if an AMD is not irreducible then an irreducible AMD can be extracted by removing decoupling zeros.   Consider an AMD 
$$\H(z) := \bmatrix{ A(z) & B(z) \\ -C(z) & D(z)} \in \mathbb{H}(\Omega)^{(r+m)\times (r+n)}$$ with state matrix $A(z)$ and transfer function $M(z) = D(z) + C(z) (A(z))^{-1} B(z).$ Let $ Q_L(z)$ be a greatest common left divisor of $A(z) $ and $B(z)$. Then $ A(z) = Q_L(z) \widetilde{A}(z)$ and $ B(z) = Q_L(z) \widehat{B}(z)$, where $\widetilde{A}(z)$ and $\widehat{B}(z)$ are left coprime. Next, let $ Q_R(z)$ be a gcrd of $ \widetilde{A}(z)$ and $C(z).$ Then $ \widetilde{A}(z) = \widehat{A}(z) Q_R(z)$ and $ C(z) = \widehat{C}(z) Q_R(z)$, where  $ \widehat{A}(z) $ and $ \widehat{C}(z)$ are right coprime.  Note that $A(z) = Q_L(z) \widehat{A}(z) Q_R(z).$ Consider the AMD 
$$\widehat{\H}(z) := \bmatrix{ \widehat{A}(z) & \widehat{B}(z) \\ - \widehat{C}(z) & D(z)} \in \mathbb{H}(\Omega)^{(r+m)\times (r+n)}  $$ with  transfer function $ D(z) + \widehat{C}(z) (\widehat{A}(z))^{-1} \widehat{B}(z) = D(z) + C(z) (A(z))^{-1} B(z) = M(z)$ and state matrix $\widehat{A}(z)$. The AMD $\widehat{\H}(z)$ is irreducible. Indeed, since $\widehat{A}(z)$ and $\widehat{C}(z)$ are right coprime, we need to show that $\widehat{A}(z)$ and $\widehat{B}(z)$ are left coprime.  Let $D_L(z)$ be a greatest common left divisor of $ \widehat{A}(z) $ and $\widehat{B}(z)$. Then $ D_L(z)$ is a common left  divisor of $ \widetilde{A}(z)=\widehat{A}(z) Q_R(z)$ and $ \widehat{B}(z).$ Since  $ \widetilde{A}(z)$ and $ \widehat{B}(z)$ are left coprime, $D_L(z)$ is a unit element. This shows that $\widehat{A}(z)$ and $\widehat{B}(z)$ are left coprime. Hence $\widehat{\H}(z)$ is irreducible and by Theorem~\ref{lamd}  $\widehat{\H}(z)$ is an AMD of least order. Note that $\partial_A$ is the order of $\H(z)$ and $  \partial_{\widehat{A}}$ is the order of $\widehat{\H}(z).$ Since $ A(z) = Q_L(z) \widehat{A}(z) Q_R(z)$, we have $\partial_A = \partial_{Q_L} + \partial_{\widehat{A}}+ \partial_{Q_R} $ which shows that $\partial_{\widehat{A}} \leq \partial_A$ and the equality holds when $Q_L(z)$ and $Q_R(z)$ are unit elements, that is, when $\H(z)$ is irreducible.   

\vone 
Define $ \sig^{d}_{\Omega}(\H) := \sig_{\Omega}(Q_L) \cup \sig_{\Omega}(Q_R)$. Then  $\sig^{d}_{\Omega}(\H)$ is called the {\em decoupling zeros} of $\H(z).$ The spectrum $\sig_{\Omega}^{id}(\H) :=\sig_{\Omega}(Q_L)$ is called the {\em input-decoupling zeros} of $\H(z).$  Let $ D_R(z)$ be a gcrd of $A(z)$ and $C(z)$. Then $\sig_{\Omega}^{od}(\H) := \sig_{\Omega}(D_R)$ is called the {\em output-decoupling zeros} of $\H(z)$  and  $ \sig^{iod}_{\Omega}(\H) := \sig_{\Omega}(D_R)\setminus \sig_{\Omega}(Q_R)$ is called the {\em input-output decoupling zeros} of $\H(z).$ Therefore, we have
$$  \sig^{d}_{\Omega}(\H) =  \sig_{\Omega}^{id}(\H)\cup \sig_{\Omega}^{od}(\H) \setminus  \sig^{iod}_{\Omega}(\H).$$  Note that $ \sig_{\Omega}(A) = \sig_{\Omega} (Q_L) \cup \sig_{\Omega}(\widehat{A}) \cup \sig_{\Omega}(Q_R) = \wp_{\Omega}(M) \cup \sig^{d}_{\Omega}(\H)$.  The spectrum  $\sig_{\Omega}(A) = \wp_{\Omega}(M) \cup \sig^{d}_{\Omega}(\H)$ is called the {\em poles} of the AMD $\H(z)$ and  $\sig_{\Omega}(\H) \cup \sig^{d}_{\Omega}(\H) $ is called the {\em zeros}  of the AMD $\H(z).$  Observe that $ \H(z)$ is irreducible $\Longleftrightarrow  \sig^{d}_{\Omega}(\H) = \emptyset.$ 


\vone Finally,  consider an LTI system with a single control delay and a single state delay given by \cite{tdsbook1, tdsbook2}  
\beano \frac{d\mathbf{x}}{dt} &=& A\mathbf{x}(t) +  B \mathbf{u}(t-\tau), \; t >0  \\ \mathbf{y}(t) &=& C \mathbf{x}(t-h). \eeano
Then system matrix $\H(z)$ and the transfer function $M(z)$  are given by 
 $$ \H(z) := \left[ \begin{array}{c|c} z I_n -A  & Be^{-\tau z} \\ \hline -C e^{-h z}& 0 \end{array} \right] \text{ and } M(z) = e^{-h z}C ( z I_n -A)^{-1}B e^{-\tau z}.$$ In this case,  $M(z)$ is a transcendental meromorphic matrix with a finite number of poles in $\C$ and an isolated essential singularity at infinity. Further, we have  
$$ \H(z) := \left[ \begin{array}{c|c} z I_n -A  & Be^{-\tau z} \\ \hline -C e^{-h z} & 0 \end{array} \right] \sim_{\C} \left[ \begin{array}{c|c} z I_n -A  & B \\ \hline -C & 0 \end{array} \right] =: \mathbf{S}(z) $$  and  $ M(z) = e^{-hz }C ( z I_n -A)^{-1}B e^{-\tau z} \sim_{\C} C ( z I_n -A)^{-1}B =: G(z).$ This shows that $\H(z)$ and $\mathbf{S}(z)$ have the same Smith form and $M(z)$ and $ G(z)$ have the same Smith-McMillan form. Further, $\H(z)$ and $\mathbf{S}(z)$ have the same decoupling (resp., input-decoupling, out-put decoupling, input-output decoupling) zeros. In other words, zeros and poles of the AMD $\H(z)$ and PMD $\mathbf{S}(z)$ are the same. Also, pointwise controllability~\cite{tdsbook1, tdsbook2} (resp., observability) of the LTI time-delay system follow from controllability (resp., observability) of the pair $(A, B)$ (res., $(A, C)$).  In particular, $\H(z)$ is irreducible if and only if $\mathbf{S}(z)$ is irreducible.

\von
{\bf Conclusion:} We have presented a theoretical framework for analyzing MFDs and AMDs of a meromorphic matrix. We have generalized important results which hold for MFDs and PMDs of a rational matrix to  MFDs and AMDs of a meromorphic matrix. We have analyzed system equivalence, AMDs of least order and their characterization by their associated transfer functions. We have presented canonical forms of transfer functions, AMDs and state matrices and established relationship between them. We have also investigated  zeros and poles including structural indices of transfer functions and AMDs. The framework so presented can be utilized to analyze LTI dynamical systems such as LTI time-delay systems whose system matrix is holomorphic and transfer function is a (transcendental) meromorphic matrix.

{\bf Acknowledgement:} The research work of the second author was funded by IIT Guwahati in the form of a research fellowship.

\end{document}